\g@addto@macro\bfseries{\boldmath}
\def\th@plain{%
	\thm@notefont{}
	\itshape 
}
\def\th@definition{%
	\thm@notefont{}
	\normalfont 
}
\tikzset{VertexStyle/.append style = { minimum size = 4pt }}	
\tikzset{EdgeStyle/.style = {-,thick}}
\newtheorem{theorem}{Theorem}[section]
\newtheorem{mainthm}[theorem]{Main Theorem}
\newtheorem{conjecture}[theorem]{Conjecture} 
\newtheorem{lemma}[theorem]{Lemma} 
\newtheorem{proposition}[theorem]{Proposition} 
\newtheorem{remark}[theorem]{Remark}
\theoremstyle{definition}
\newtheorem{definition}[theorem]{Definition} 
\newtheorem{example}[theorem]{Example} 
\newtheorem*{acknowledge}{Acknowledgments}
\newtheorem*{organization}{Organization}
\def\bdm{\begin{displaymath}}
\def\edm{\end{displaymath}}
\def\rrr{\mathbb{R}}
\def\ccc{\mathbb{C}}
\def\zzz{\mathbb{Z}}
\def\qqq{\mathbb{Q}}
\def\sph{\mathrm{S}}
\def\disk{\mathrm{D}}
\def\rp{\rrr \mathrm{P}}
\def\cp{\ccc \mathrm{P}}
\def\wcp{\cp ^2 _{a,b,c}}
\def\O{\mathrm{O}}
\def\SO{\mathrm{SO}}
\def\SL{\mathrm{SL}}
\def\T{\mathrm{T}}
\def\d{\partial}
\def\id{\mathrm{id}}
\def\normal{\vartriangleleft}
\def\st{\mathrel{}\middle|\mathrel{}}
\def\subgp{\subset}
\def\q{Q}
\def\qprime{\q\ensuremath{'}}
\DeclareMathOperator{\xt}{xt}
\DeclareMathOperator{\dist}{dist}
\DeclareMathOperator{\diam}{diam}
\DeclareMathOperator{\curv}{curv}
\DeclareMathOperator{\cl}{cl}
\DeclareMathOperator{\Fix}{Fix}
\DeclareMathOperator{\Susp}{Susp}
\newcommand{\I}[2]{\# \left( #1 \cap #2 \right)} 
\let\originalleft\left
\let\originalright\right
\renewcommand{\left}{\mathopen{}\mathclose\bgroup\originalleft}
\renewcommand{\right}{\aftergroup\egroup\originalright}
\begin{document}

\title[Positively curved orbifolds with circle symmetry]{Positively curved Riemannian orbifolds and Alexandrov spaces with circle symmetry in dimension $4$}

\author[Harvey]{John Harvey}
\address{Department of Mathematics, Swansea University, Fabian Way, Swansea, U.K.}
\email{john.harvey.math@gmail.com}

\author[Searle]{Catherine Searle}
\address{Department of Mathematics, Statistics, and Physics, Wichita State \, University, Wichita, KS 67260, USA.}
\email{searle@math.wichita.edu}

\subjclass[2010]{Primary: 53C23; Secondary: 51K10, 53C20, 57R18}

\date{Oct 16, 2021}

\begin{abstract} We classify positively curved Alexandrov spaces of dimension $4$ with an isometric circle action up to equivariant homeomorphism, subject to a certain additional condition on the infinitesimal geometry near fixed points which we conjecture is always satisfied.
	
As a corollary, we also classify positively curved Riemannian orbifolds of dimension $4$ with an isometric circle action.
\end{abstract}
\maketitle


\section{Introduction}

 In their seminal work, Hsiang and Kleiner~\cite{HK} showed that for a closed, orientable, positively curved Riemannian manifold $M$ of dimension $4$, admitting an isometric action of the circle $T^1$,  $M$ is homeomorphic to $\sph^4$ or $\cp^2$. This classification was obtained via a study of the structure of the orbit space of such a circle action and an analysis of the fixed-point sets of that action, combined with the homeomorphism classification of $4$-manifolds due to  Freedman~\cite{Fr}. 

The orbit space $M/T^1$ is a positively curved Alexandrov space, and our understanding of these spaces is much improved since the publication of~\cite{HK}. By making use of Alexandrov geometry, as well as the resolution of the Poincar\'{e} Conjecture, this classification has been strengthened to show that the manifolds are actually equivariantly diffeomorphic to $\sph^4$ or $\cp^2$ with a linear action by work of  Grove and Searle~\cite{GS1} and Grove and Wilking~\cite{GW2}.

The nature of these results can be probed further by relaxing certain of the hypotheses. If we assume only non-negative sectional curvature, it was shown independently by Kleiner~\cite{K} and Searle and Yang~\cite{SY} that we add in, up to homeomorphism, only $\sph^2 \times \sph^2$ and $\cp^2 \# \pm \cp^2$. Galaz-Garc\'ia~\cite{GG}, Galaz-Garc\'ia and Kerin~\cite{GGK}, and~\cite{GW2} showed that this too can be improved to equivariant diffeomorphism.

Alternatively, one can relax the assumption that the spaces are Riemannian manifolds.
In~\cite{Y}, Yeroshkin considers the case of positively curved Riemannian orbifolds.
With the additional assumption that the orbifold fundamental group is trivial, he proves that the underlying topological space of the orbifold is homotopy equivalent to $\sph^4$ or has the cohomology of $\cp^2$.
In the case where there is a $2$-dimensional fixed-point set, he improves the classification to homeomorphism, showing that it is $\sph^4$ or a weighted complex projective space, $\cp^2_{a, b, c}$. He conjectures that this also holds true when there are only isolated fixed points (see Conjecture 5.3~\cite{Y}).

We resolve this conjecture in the affirmative.

\begin{theorem}[Orbifold Classification]\label{t:orbifold}
	Let $T^1$ act isometrically and effectively on $X^4$, where $X^4$ is a $4$-dimensional, closed, positively curved, orientable Riemannian orbifold, with $\pi_1^{\mathrm{orb}}(X)=\lbrace 0 \rbrace$. Then, up to equivariant homeomorphism, the underlying topological space $\left| X \right|$ is one of the following spaces:
	\begin{enumerate}
		\item The $4$-sphere, with a linear action; or
		\item A weighted complex projective space, with an action induced by a linear $T^2$ action on $\sph^5$.
	\end{enumerate}
\end{theorem}

This result is obtained as an immediate corollary of a more general result, Theorem \ref{t:amsr}, which relaxes further the hypothesis on the space to permit Alexandrov spaces, a class of spaces that include  Riemannian orbifolds.
Symmetries of low-dimensional Alexandrov spaces have been studied elsewhere (see work of  N\'u\~{n}ez-Zimbr\'on \cite{NZ} in dimension 3 and Corro,  N\'u\~{n}ez-Zimbr\'on and Zarei \cite{CNZZ} and Galaz-Garc\'{i}a \cite{GG2} in dimension 4).

A crucial element of the Riemannian results described above is the determination of an upper bound on the number of isolated fixed points in the space.
This is established by an elegant argument which relies on an understanding of the geometry of the space of directions at these fixed points in the orbit space.
In particular, the space of directions contains no triangles with perimeter exceeding $\pi$, or, in the language of extents, the $3$-extent is  bounded above by $\tfrac{\pi}{3}$. In this paper, we refer to such spaces of directions as \emph{small}.

The proof of the upper bound on the $3$-extent is straightforward in the Riemannian cases, both for manifolds and orbifolds, since the geometry of the space of directions is rigid.
The greater flexibility in general Alexandrov spaces creates a surprising difficulty in proving the upper bound in full generality. 
We define here a condition which permits the extension of this bound.

\begin{definition}[Condition \qprime{}] An isometric action of the circle $T^1$ on an Alexandrov space $\Sigma^3$ of dimension $3$ with $\curv \geq 1$ is said to satisfy \emph{Condition \qprime{}} if it is fixed-point-free and the following hold:
	
	\begin{enumerate}
		\item $\Sigma^3/T^1$ is a small space;
		\item The double branched cover of $\Sigma^3 / T^1$ over any two points corresponding to finite isotropy is small; and
		\item If there are three components of finite isotropy, $\diam \left( \Sigma^3 / T^1 \right) \leq \tfrac{\pi}{4}$.
	\end{enumerate} 	
\end{definition}

The first part of the condition is sufficient to bound the number of isolated fixed points. The other two parts are used to obtain that  the singular set is unknotted in the orbit space. 

While Condition \qprime{} may seem technical, we show in Lemma \ref{l:qprime} that it is satisfied for any fixed-point-free isometric circle action on
a $3$-dimensional spherical orbifold of constant curvature 1.

\begin{definition}[Condition \q{}] An isometric action of the circle $T^1$ on a $4$-dimensional Alexandrov space is said to satisfy \emph{Condition \q{}} if at every isolated fixed point the isotropy action satisfies Condition \qprime{}.
\end{definition}

For example, any isometric circle action on a Riemannian $4$-orbifold satisfies Condition \q{} by Lemma \ref{l:qprime}.

\begin{mainthm} \label{t:amsr} 
	 Let $T^1$ act isometrically and effectively on $X^4$ so as to satisfy Condition \q{}, where $X^4$ is a $4$-dimensional, closed, positively curved, orientable Alexandrov space. Then, up to equivariant homeomorphism, $X$ is one of the following spaces:
	\begin{enumerate}
		\item The suspension of a spherical $3$-manifold, with a linear action; or
		\item A finite quotient of a weighted complex projective space with a linear action.
	\end{enumerate}
\end{mainthm}

Let us immediately point out how Theorem \ref{t:orbifold} follows from this result. First, Theorem \ref{t:amsr} is applicable, since a  closed, positively curved, orientable Riemannian orbifold is an example of a closed, positively curved, orientable Alexandrov space satisfying Condition \q{}. The restriction on the orbifold fundamental group simply means that we exclude any finite quotients from the classification, so that the spherical $3$-manifold can be taken to be $\sph^3$.

Note that this list of possible spaces in Theorem \ref{t:amsr} is very restrictive, with every space being the quotient of a sphere. The additional spaces obtained by relaxing the Riemannian hypothesis arise only because the class of Alexandrov spaces is closed under taking the quotient by an isometric group action, even when that action is not free.

As in the manifold case, the bound on the symmetry rank of a positively curved $n$-dimensional Alexandrov space is $\lfloor \tfrac{n+1}{2}\rfloor$. It is interesting to compare Theorem \ref{t:amsr} to the following result of the authors~\cite{HS}, which shows that all spaces that achieve this bound are obtained as quotients of spheres.

\begin{theorem}[Maximal Symmetry Rank Theorem]\cite{HS}
	\label{t:alexmsr} Let $X$ be an $n$-dimensional, compact,
 Alexandrov space with $\curv \geq 1$ admitting an isometric $T^k$ action with $k=\lfloor  \tfrac{n+1}{2} \rfloor$. Then either
\begin{enumerate}
\item $X$ is a spherical orbifold, homeomorphic to $\sph^n / G$, where $G$ is a finite subgroup of the centralizer of the maximal torus in $\O(n+1)$; or
\item Only in the case that $n$ is even, $X \simeq \sph^{n+1}/G$, where $G$ is a rank one subgroup of the maximal torus in $\O(n+2)$
\end{enumerate}
and in both cases the action on $X$ is equivalent to that induced by the maximal torus.
\end{theorem}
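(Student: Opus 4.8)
The plan is to argue by induction on the dimension $n$, taking the symmetry rank bound $k \leq \lfloor (n+1)/2\rfloor$ as already established and treating only the equality case. The base cases $n=1,2$ are handled directly: a closed $1$-dimensional Alexandrov space with $\curv \geq 1$ is a circle, and a closed positively curved Alexandrov surface carrying an isometric $T^1$ is $\sph^2/G$, so both fall under Case $(1)$. The two structural tools I would rely on are the equivariant slice theorem for Alexandrov spaces and the fact that the fixed-point set $\Fix(T^1, X)$ of any circle subgroup is a closed extremal subset which is itself an Alexandrov space with $\curv \geq 1$; since $T^1$ acts trivially on it, the residual quotient $T^k/T^1 \cong T^{k-1}$ acts isometrically and effectively on it.

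The heart of the inductive step is to locate a circle $T^1 \subgp T^k$ whose fixed-point set $F = \Fix(T^1, X)$ is non-empty of codimension exactly $2$. A dimension count shows that for such an $F$ the residual torus realises the maximal rank $\lfloor(n-1)/2\rfloor$ for $\dim F = n-2$, since $k-1 = \lfloor(n+1)/2\rfloor - 1$ agrees with $\lfloor((n-2)+1)/2\rfloor$; hence the inductive hypothesis classifies $F$ as a sphere quotient with linear residual action. The normal space of directions to $F$ is a $1$-dimensional Alexandrov space with $\curv \geq 1$ on which $T^1$ acts by fixed-point-free rotation, so it is a round $\sph^1$, and the slice theorem presents a neighbourhood of $F$ as the mapping cylinder of a circle bundle over $F$; this is the global reflection of the spherical join decomposition $\Sigma_p X = \Sigma_p F * \sph^1$ along the totally geodesic subset $F$.

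To pass from this tube to all of $X$ I would run a Grove--Searle soul/critical-point argument in the Alexandrov category: the function $\dist(\,\cdot\,, F)$ has no critical points away from $F$ and its maximum set $E$, both $F$ and $E$ are extremal, and $X$ is the union of metric balls about $F$ and $E$ glued along a hypersurface of principal orbits. The rigidity supplied by $\curv \geq 1$ forces this decomposition to be that of a metric join, so that $X$ is recovered up to equivariant homeomorphism as a quotient of an iterated join $\sph^a * \sph^b$ by the torus-and-finite data coming from $F$ and $E$, and the residual action is read off from the standard linear models. The dichotomy of the theorem is precisely the dichotomy of the dual set $E$: when $E$ carries a $T^k$-fixed point the capping-off closes $X$ into $\sph^n/G$ with $G$ finite in the centraliser of $T^k$ in $\O(n+1)$, giving Case $(1)$; when no circle admits a codimension-$2$ fixed-point set I would instead pass to the quotient $X/T^1$, an $(n-1)$-dimensional space with $\curv \geq 1$ carrying the residual torus, which in odd dimensions again has maximal rank and submits to the inductive hypothesis, while in even dimensions the action stays almost free and the same data exhibit $X$ as the circle quotient $\sph^{n+1}/G$ with $G \subgp T^{k+1} \subgp \O(n+2)$, producing the exceptional family of Case $(2)$.

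The hard part will be the global reconstruction. In the manifold setting the tubular neighbourhood theorem and equivariant gluing make the passage from the normal data around $F$ to the global join essentially automatic; in the Alexandrov category one has only the slice theorem and must extract the rigidity from the curvature bound itself, controlling the dual extremal set $E$ and verifying that the two tubes glue to a genuine sphere quotient rather than to some more exotic space. Tracking the finite isotropy group $G$ faithfully through this gluing, and cleanly separating the two cases so that the even-dimensional family is neither overcounted nor missed, is where the essential work lies.
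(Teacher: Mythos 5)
This paper does not actually prove Theorem \ref{t:alexmsr}: it is quoted verbatim from the authors' earlier work \cite{HS}, so there is no internal proof to compare against. The closest ingredient the paper does import is Theorem \ref{t:fph}, the structure theorem for fixed-point-homogeneous actions, and that theorem is precisely the engine of the proof in \cite{HS}. Your overall strategy --- find a circle $T^1 \subgp T^k$ whose fixed-point set $F$ has codimension two, so that the action is fixed-point homogeneous, decompose $X$ using $\dist(\cdot,F)$ and the dual ``soul'' set $E$ into a join-type quotient, and induct on dimension --- is the same Grove--Searle-style strategy, with your soul/critical-point step being exactly the content of Theorem \ref{t:fph} (which asserts $X \cong (\nu * T^1)/T^1_p$ equivariantly, $p$ the soul point).

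There are, however, two genuine gaps. First, the existence of a circle with codimension-two fixed-point set is asserted rather than proved, and it is a substantive step: in even dimensions it requires Lemma \ref{l:fixedpoint} together with an inductive appeal to the classification on the space of directions at a $T^k$-fixed point (to produce a circle fixing a codimension-two subset of $\Sigma_p$, hence of $X$), while in odd dimensions the full torus need not have any fixed point at all, so one needs a separate argument producing a circle orbit and hence corank-one isotropy. Second, and more seriously, your case dichotomy is wrong. Case (2) does \emph{not} arise ``when no circle admits a codimension-$2$ fixed-point set'': the Case (2) spaces admit fixed-point-homogeneous circles --- indeed this paper states explicitly in Section \ref{s:cp2} that $\wcp = \sph^5/T^1_{a,b,c}$ carries three fixed-point-homogeneous circle subgroups of its $T^2$. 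So your alternative branch is vacuous for exactly the spaces it is meant to produce, and those spaces would instead enter your fixed-point-homogeneous branch and be misclassified as Case (1). In the actual argument the dichotomy lives \emph{inside} the fixed-point-homogeneous picture and is governed by the isotropy of the soul orbit: finite soul isotropy $\zzz_m$ gives $X \cong (\nu * \sph^1)/\zzz_m$ and, inductively, quotients of $\sph^n$ as in Case (1), whereas a soul orbit that is a fixed point gives $X \cong (\Sigma_q * T^1)/T^1 \cong \sph^{n+1}/G$ with $G$ of rank one, i.e.\ Case (2), which forces $n$ even since almost-free circle actions exist only on odd-dimensional spheres. Relatedly, your fallback of passing to $X/T^1$ and inducting cannot work as stated: an equivariant classification of $X$ cannot be read off from the orbit space alone, since recovering the total space from orbit-space data requires additional invariants --- that reconstruction problem is exactly what Section \ref{s:top3points} of this paper is devoted to.
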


We note that while for positively curved Riemannian manifolds there is no difference in dimension $4$ between the maximal and almost maximal symmetry rank cases, there is a difference for Alexandrov spaces. Namely, while the spaces 
 obtained in Theorems \ref{t:amsr} and \ref{t:alexmsr} are all linear quotients of $\sph^4$ and $\sph^5$, in Theorem \ref{t:amsr} there are a greater variety of quotients possible, as the dihedral and binary polyhedral subgroups of $\SO(5)$ and $\SO(6)$ also occur. 

\begin{organization}
The paper is organized as follows. In Section \ref{s:prelim}, we present notation and conventions, as well as background material about Alexandrov spaces, group actions on Alexandrov spaces, and Seifert manifolds. In Section \ref{s:singularities},  we describe the restrictions imposed by positive curvature on isolated singular 
points and singular knots. In Section \ref{s:top3points}, we prove a topological classification when there are three isolated points of circle isotropy. In Section \ref{s:amsr3} we classify isometric circle actions on positively curved $3$-spaces and in Section \ref{s:amsr4} we address the $4$-dimensional case, proving Theorem \ref{t:amsr}. In Section \ref{s:alex}, we discuss a conjecture about the extents of quotients of Alexandrov spaces by isometric group actions which would show that Condition \q{} is always satisfied.
\end{organization}

\begin{acknowledge}
J.~Harvey's work on this project was supported in part by a Daphne Jackson Fellowship sponsored by the UK Engineering and Physical Sciences Research Council and Swansea University, as well as by Karsten Grove's grant from the National Science Foundation and Burkhard Wilking's Leibniz-Preis from the Deutsche Forschungsgemeinschaft.
	
C.~Searle is grateful to the Mathematics Department of the University of Notre Dame for its hospitality during a visit where a part of this research was carried out. 
She was supported in part by Simons Foundation Grant \#355508 (Catherine Searle), her NSF grant numbers DMS-160178 and DMS-1906404 and CONACyT Project numbers SEP-CO1-46274 and SEP-82471.   
This material is based in part upon work supported by the National Science Foundation under Grant number DMS-1440140 while C.~Searle was in residence at the Mathematical Sciences Research Institute in Berkeley, California, during the Spring 2016 semester.\enlargethispage{\baselineskip}

Both authors are grateful to B.~Wilking, A.~Petrunin, and F.~Wilhelm for conversations around the possibility of removing Condition \q{} from Theorem \ref{t:amsr} and
to Orsola Capovilla-Searle for her help with the figures.

Finally, both authors express their gratitude to the anonymous reviewer who suggested a sharper focus for this article and whose comments led us to simpler arguments for Propositions \ref{p:3space} and \ref{p:loopandspur}.
\end{acknowledge}


\section{Preliminaries}\label{s:prelim}

In this section we  fix notation, recall basic definitions and theorems about Alexandrov spaces, as well as discuss  how unnormalized invariants are assigned to Seifert manifolds.


\subsection{Alexandrov geometry} \label{s:alexandrov}

A finite-dimensional \emph{Alexandrov space} is a locally complete, locally compact, connected (except in dimension $0$, where a two-point space is admitted) length space, with a lower curvature bound in the triangle comparison sense. We assume throughout that the space is compact, and usually without boundary, in which case it is said to be \emph{closed}. There are a number of introductions to Alexandrov spaces to which the reader may refer for basic information (see, for example, Burago, Burago, and Ivanov~\cite{BBI}, Burago, Gromov, and Perelman~\cite{BGP}, Plaut~\cite{Pl}, and Shiohama~\cite{Sh}). Riemannian orbifolds are simple examples of Alexandrov spaces: in fact,  $4$-dimensional Alexandrov spaces  are all homeomorphic to orbifolds. 

The \emph{space of directions} of an Alexandrov space $X^n$ of dimension $n$ at a point $p$ is,
by definition, the completion of the 
space of geodesic directions at $p$ and is denoted by $\Sigma_p X$ or, when there is no confusion, $\Sigma_p$.  
It is a compact Alexandrov $(n-1)$-dimensional space with $\curv \geq 1$. A small metric ball around $p$, $B_r(p)$, is homeomorphic to an open cone on $\Sigma_p$ by work of Perelman~\cite{P}. The local model for an Alexandrov space is therefore given by a cone on any space of positive curvature. In the case of a Riemannian orbifold, $\Sigma_p$ is isometric to a quotient of the unit sphere by a finite group.

The class of Alexandrov spaces is closed under taking quotients by isometric group actions, even when those actions are not free. Furthermore, the subclass of spaces with $\curv \geq 1$ is closed under taking spherical suspensions and spherical joins.

An Alexandrov space has an open dense subset which is a topological manifold, and in the event that the space has no boundary then the complement of the manifold part has codimension at least three.
In low dimensions, the structure is then relatively simple.
In dimension three, the only topological singularities are isolated points with space of directions homeomorphic to $\rp^2$.
In fact, Galaz-Garc\'{i}a and Guijarro~\cite{GGG2} have classified the positively curved $3$-spaces as follows.

\begin{proposition}\label{p:GGG}
	 Any positively curved $3$-space is homeomorphic to a quotient of $\sph^3$ by some finite subgroup $\Gamma$ of $\O(4)$. In particular, it is homeomorphic to the suspension of $\rp^2$ or to a spherical manifold.
\end{proposition}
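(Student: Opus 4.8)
The plan is to reduce the statement to the geometrization of $3$-manifolds by means of a branched double cover, following the strategy of Galaz-García and Guijarro. First I would pin down the local structure. By the facts recalled above, a closed positively curved $3$-space $X$ is a topological $3$-manifold away from finitely many isolated points $p_1, \dots, p_m$, and near each $p_i$ a neighborhood is homeomorphic to the open cone on $\rp^2$, which is the quotient $\rrr^3/\{\pm \id\}$. Thus $X$ is a topological orbifold whose only singularities are these $\rp^2$-cone points. If $m = 0$ then $X$ is already a closed topological $3$-manifold and I would pass directly to the classification step below.

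Assuming $m \geq 1$, the next step is to build a branched double cover. Removing disjoint open cone neighborhoods of the $p_i$ yields a compact $3$-manifold $M$ with $\partial M = \bigsqcup_{i=1}^m \rp^2$. Since the boundary of a compact $3$-manifold has even Euler characteristic and $\chi(\rp^2) = 1$, the number $m$ is even. I would then take the connected double cover $\tilde M \to M$ determined by a class in $H^1(M; \zzz_2)$ that restricts to the nonzero element on each boundary $\rp^2$, so that each $\rp^2$ lifts to $\sph^2$, and cap the resulting $\sph^2$ boundary components with copies of $\disk^3$. This produces a closed topological $3$-manifold $\tilde X$ together with an involution $\iota$ with quotient $X$; the fixed-point set of $\iota$ consists of the cap tips, and near each of these $\iota$ is modeled on $\rrr^3 \to \rrr^3/\{\pm\id\}$. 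Pulling back the metric makes $\tilde X$ a closed Alexandrov $3$-manifold with $\curv \geq 1$ on which $\iota$ acts isometrically.

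The classification of $\tilde X$ is the heart of the argument. Positive curvature gives $\diam(\tilde X) \leq \pi$ and, by the Bonnet--Myers and Toponogov theorems for Alexandrov spaces, forces $\pi_1(\tilde X)$ to be finite. Its universal cover $\hat X$ is therefore a closed, simply connected topological $3$-manifold, hence homeomorphic to $\sph^3$ by the Poincar\'e Conjecture. Thus $\tilde X$ is a closed $3$-manifold with finite fundamental group and $\sph^3$ as universal cover, so by Perelman's elliptization theorem and the classification of spherical space forms it is homeomorphic to $\sph^3/\Gamma'$ with $\Gamma' \subgp \O(4)$ acting freely and linearly. When $m = 0$ this already gives $X = \tilde X \cong \sph^3/\Gamma'$, a spherical manifold.

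Finally I would descend and linearize. Lifting $\iota$ to $\sph^3$ produces an involution normalizing $\Gamma'$, so $\iota$ and $\Gamma'$ generate a finite group $\Gamma$ acting on $\sph^3$ with $X \cong \sph^3/\Gamma$. Invoking the geometrization of finite group actions on spherical $3$-manifolds (equivalently, the geometrization of the positively curved topological $3$-orbifold $X$) I would conjugate this action to a linear one, so that $\Gamma \subgp \O(4)$. If $\Gamma$ acts freely, $X$ is a spherical manifold; otherwise the only topological singularities arise from order-two elements acting as $-\id$ on a normal $\sph^2$, producing $\rp^2$-cone points, and analyzing $\Fix$ of such an involution identifies $X \cong \Susp(\rp^2)$. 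The main obstacle is not the Alexandrov geometry, which is essentially dictated by the quoted structure theory, but the deep topological input: realizing $\tilde X$ as a genuine spherical space form and then linearizing the involution together with the deck group. It is precisely this passage from a topological involution on a space form to a linear one that requires the full strength of $3$-dimensional geometrization.
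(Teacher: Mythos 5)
The paper does not actually prove this proposition: it is quoted directly from Galaz-Garc\'ia and Guijarro \cite{GGG2}, so the comparison is with their argument, and your proposal is essentially a reconstruction of it (ramified orientable double cover, identification of the cover as a spherical space form via Bonnet--Myers and elliptization, then descent to the quotient). The architecture is the right one, and small points such as taking the class in $H^1(M;\zzz_2)$ to be the orientation class $w_1(M)$ (whose restriction to each boundary $\rp^2$ is automatically nontrivial) are easily repaired. However, the two steps you dispatch in a single sentence each are precisely where the substance lies. The first is the assertion that ``pulling back the metric makes $\tilde X$ a closed Alexandrov $3$-manifold with $\curv \geq 1$.'' Lower curvature bounds are \emph{not} inherited by branched covers in general: already in dimension $2$, a double cover of the round sphere branched at a point creates a cone point of angle $4\pi$, at which every lower curvature bound fails. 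What makes the present situation work is that the link of each branch point upstairs is the orientation double cover of an $\rp^2$ with $\curv \geq 1$, hence an $\sph^2$ with $\curv \geq 1$; but that is only an infinitesimal statement, and verifying the distance comparison on an actual neighborhood of the branch point is a genuine lemma --- one of the key technical results of \cite{GGG2}. Without it, Bonnet--Myers cannot be applied to $\tilde X$ and the rest of your argument has nothing to stand on.

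The second, more serious, gap is the linearization step. The group $\Gamma$ you produce acts on $\sph^3$ only by homeomorphisms --- isometries of a lifted Alexandrov metric, which is not smooth --- whereas the theorems you invoke (linearization of finite group actions on spherical $3$-manifolds via equivariant Ricci flow, or orbifold geometrization) are theorems about \emph{smooth} actions. For merely topological actions the conclusion is false: Bing's wild involutions of $\sph^3$ are not conjugate to linear ones. So as literally stated this step fails; to fill it you must exploit the Alexandrov structure to rule out wildness, for instance by using the Slice Theorem \ref{l:slice} to see that near each fixed point the involution is the cone on a free involution of $\sph^2$ (hence locally standard), and then smoothing equivariantly in dimension $3$ before invoking those results --- or else avoid linearizing the whole group, as one can: a Lefschetz fixed-point computation shows an orientation-reversing involution with isolated fixed points on a rational homology $3$-sphere has exactly two of them, and the observation that every orientation-reversing element of your $\Gamma$ must be an involution (it has a fixed point, and its square is a free deck transformation fixing that point) forces it to invert $\pi_1(\tilde X)$, so that $\pi_1(\tilde X)$ is abelian, hence cyclic. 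Either way, the passage from your finite group of homeomorphisms to a subgroup of $\O(4)$ requires an argument that is entirely missing from the proposal.
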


By Lemma 3.3 in~\cite{HS},  an Alexandrov space without boundary, $X$,  is orientable if and only if its manifold part is orientable. If, for some $p \in X$, $\Sigma_p$ is not orientable, then $X$ does not even admit a local orientation near $p$. In general, if the space is not orientable, it may be obtained as the quotient of an orientable Alexandrov space by an isometric involution by Theorem 3.4 in \cite{HS} (cf. \cite{GGG2} for the $3$-dimensional case).

In an oriented Alexandrov space $X$,  the {\em  intersection number} of two subsets $A$ and $B$, $\I{A}{B}$, is defined as usual on the manifold part of $X$. In this paper, $A$ and $B$  are either two curves in a surface, or  a curve and a surface in a $3$-dimensional space.

We recall Petrunin's  analogue of Synge's Theorem ~\cite{Pet1} for Alexandrov spaces, giving here the version  from~\cite{HS}.

\begin{theorem}[Generalized Synge's Theorem]\label{GST} Let $X$ be an Alexandrov space of dimension $n$ with $\curv \geq 1$. 
\begin{enumerate}
\item Let $X$ be even-dimensional. If $X$ is either orientable or locally non-orientable
 then $X$ is simply connected, and otherwise it has fundamental group $\zzz_2$.
\item If $X$ is odd-dimensional and locally orientable then $X$ is orientable.
\end{enumerate}
\end{theorem}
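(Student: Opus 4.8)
The plan is to reduce the whole statement to a single generalized Weinstein fixed-point theorem for isometries of positively curved Alexandrov spaces, applied to deck transformations of appropriate covers. The enabling tool is Petrunin's parallel transport along geodesics together with its second-variation inequality, which allows the classical Synge--Weinstein second-variation argument to run essentially verbatim on the regular part of the space. Two structural facts from the excerpt make this legitimate: that $X$ is orientable if and only if its manifold part is, and that the non-manifold locus has codimension at least $3$, so that $\pi_1(X)$ and the orientation data agree with those of the manifold part and generic geodesics avoid the singular set. Since $\curv\geq 1$, all covers in play are again compact with $\curv>0$ by the Alexandrov form of Bonnet--Myers, so $\pi_1(X)$ is finite and the universal cover exists as a compact positively curved Alexandrov space.

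First I would isolate the key lemma: if $Y^n$ is compact, orientable, with $\curv>0$, then a fixed-point-free isometry $g$ must reverse orientation when $n$ is even and preserve it when $n$ is odd. To prove it, assume $g$ has no fixed point and minimize the displacement $x\mapsto\dist(x,g(x))$, obtaining a point $p$ and a minimal geodesic $\sigma$ from $p$ to $g(p)$ which is an axis, so that $dg_p$ carries $\sigma'(0)$ to the parallel transport $\sigma'(\ell)$. The orthogonal map $B=P_\sigma^{-1}\circ dg_p$ of $T_p$ then fixes $\sigma'(0)$ and acts on the normal space $N$ of dimension $n-1$; since parallel transport preserves local orientation, $\det(B|_N)=+1$ when $n$ is even and $g$ preserves orientation, and $\det(B|_N)=-1$ when $n$ is odd and $g$ reverses orientation. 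In the first case $B|_N\in\SO(n-1)$ with $n-1$ odd has a fixed axis; in the second, an orthogonal map of determinant $-1$ in even dimension $n-1$ has an odd number of $+1$ eigenvalues, hence a fixed vector $W$. The $g$-invariant parallel field generated by $W$ has negative second variation of the displacement by $\curv>0$, contradicting the minimality of $p$.

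The theorem then follows case by case. For part (2), if $X^n$ ($n$ odd) is locally orientable but not orientable, its orientation double cover $\hat X$ is connected, orientable, odd-dimensional and positively curved, and its nontrivial deck transformation is fixed-point-free and orientation-reversing, contradicting the lemma; hence $X$ is orientable. For part (1) with $X$ orientable, the universal cover $\tilde X$ is orientable, even-dimensional and positively curved, and every nontrivial deck transformation is free and orientation-preserving, so the lemma forces the deck group to be trivial and $X$ is simply connected. For $X$ locally orientable but not orientable, the orientation character $\pi_1(X)\to\zzz_2$ is defined and nontrivial; its kernel is $\pi_1$ of the orientable double cover, which is simply connected by the previous case, so $\pi_1(X)\cong\zzz_2$.

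The main obstacle is the remaining even-dimensional, locally non-orientable case, where simple connectivity must still be proved even though there is no global orientation and hence no orientation character or orientation double cover to exploit; moreover local non-orientability (some $\Sigma_p$ non-orientable) passes to the universal cover $\tilde X$, so the orientable form of the lemma does not apply. My approach is to run the displacement argument directly on $\tilde X$: for a putative nontrivial free deck transformation $g$ with axis $\sigma$, the sign of $\det(B|_N)$ is governed by the local-orientation behavior of parallel transport along $\sigma$, and the point is to show it must be $+1$, so that the odd-dimensional normal space again yields a fixed vector and the usual contradiction. Making this rigorous -- controlling the $\zzz_2$-holonomy of Petrunin's parallel transport as the axis crosses genuinely non-orientable points, and ruling out that the axis simply avoids that locus -- is the crux. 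A cleaner alternative, which I would attempt in parallel, is to prove a version of the lemma requiring no orientability hypothesis, stated purely in terms of the $\zzz_2$-holonomy of the tangent bundle over geodesics, from which all four cases would follow uniformly.
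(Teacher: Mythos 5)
First, a point of orientation: the paper does not prove Theorem \ref{GST} at all; it is recalled verbatim from the authors' earlier work \cite{HS}, which in turn rests on Petrunin's parallel transportation and second-variation inequality \cite{Pet1}. So your proposal has to be measured against the proofs in those references. Your architecture --- a Weinstein-type displacement-minimization lemma powered by Petrunin's transport, followed by covering-space reductions --- is indeed the route taken there for three of the four cases (even/orientable, even/locally orientable but non-orientable, odd/locally orientable). Even in those cases, though, one step fails as written: you treat $B|_N$ as an orthogonal matrix and count eigenvalues, which presupposes that the tangent cone at the displacement-minimizing point is Euclidean and that the normal space is a round sphere. Neither can be assumed: the minimizing point and the axis may well be metrically singular (no genericity is available, since the axis is forced on you), the space of normal directions $N$ is merely an $(n-2)$-dimensional Alexandrov space with $\curv \geq 1$, and determinants and eigenvalues are undefined there. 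The fixed normal direction must instead be produced by induction on dimension, applying the fixed-point statement one dimension down to the isometry induced on $N$. Relatedly, your background claim that $\pi_1(X)$ agrees with $\pi_1$ of the manifold part is false in this category: for $X = \Susp\left(\Susp\left(\rp^2\right)\right)$ the topologically singular set is a circle, of codimension three, yet $\pi_1$ of the manifold part is $\zzz_2$ while $\pi_1(X)=1$; only surjectivity of $\pi_1(X_{\mathrm{man}}) \to \pi_1(X)$ survives, and what your covering arguments actually require is that local orientability allows the orientation cover of the manifold part to extend across the singular set.

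The decisive gap is the one you flag yourself: the even-dimensional, locally non-orientable case. Your proposed repair --- showing that the $\zzz_2$-holonomy term $\det(B|_N)$ along the axis must be $+1$ --- cannot succeed, for a structural reason: an argument that examines only the axis and the transport along it cannot distinguish this case from $\rp^{2n} = \sph^{2n}/\{\pm 1\}$, which is locally orientable, positively curved, and has $\pi_1 = \zzz_2$; there the deck transformation genuinely reverses local orientation along its axis and no invariant normal direction exists. Whatever forces simple connectivity in the locally non-orientable case must therefore use the locally non-orientable points themselves, which sit elsewhere in the space, generally nowhere near the axis. That is exactly what the proof in \cite{HS} does, via the central innovation of that paper: the ramified orientable double cover. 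One takes the orientation double cover of the manifold part and metrically completes it; \cite{HS} show the completion $\hat{X}$ is again an Alexandrov space with the same lower curvature bound. Then $\hat{X}$ is orientable, even-dimensional and positively curved, hence simply connected by Petrunin's theorem, and the deck involution of $\hat{X} \to X$ is not free --- its fixed points are the branch points lying over the locally non-orientable locus. Since the quotient of a simply connected space by a finite group generated by elements with fixed points is simply connected (Armstrong's theorem), $\pi_1(X) = 1$. Without this construction, or some substitute for it, part (1) for locally non-orientable spaces remains unproved in your proposal, and your ``cleaner alternative'' of an orientation-free holonomy lemma is blocked by the $\rp^{2n}$ obstruction above.
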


\subsection{Equivariant Alexandrov geometry}\label{s:equivalex}

We  now concentrate our attention on isometric group actions on Alexandrov spaces. 

Given an isometric (left) action $G\times X\rightarrow X$ of a Lie group $G$, and a point $p\in X$, we let $G(p)=\{\,gp :g\in G \,\}$ be the \emph{orbit}  of $p$ under the action of $G$. The \emph{isotropy group} of $p$ is the subgroup $G_p=\{\, g\in G: gp=p\,\}$. Recall that $G(p)\cong G/G_p$. We  denote the orbit space of this action by $\bar{X}=X/G$. Similarly, the image of a point $p\in X$ under the orbit projection map $\pi:X\rightarrow \bar{X}$
 is denoted by $\bar{p}\in \bar{X}$. We  assume throughout that $G$ is compact and its action is \emph{effective}, that is, that $\bigcap_{p\in X}G_p$ is the trivial subgroup $\{e\}$ of $G$. By Theorem 2.2 in Galaz-Garc\'{i}a and Guijarro \cite{GGG1}, the set of principal orbits  then forms an open dense subset of $X$.

As in the case of Riemannian manifolds, the space of directions at any point $p$ decomposes as the spherical join of the orbital directions (the unit sphere in the Lie algebra) and the normal directions $\nu_p$, which in general might be any Alexandrov space with $\curv \geq 1$, see Galaz-Garc\'ia and Searle~\cite{GGS}. If $G$ acts effectively on $X$ then the induced isometric action of  $G_p$ on $\Sigma_p$ must be effective. Where $G_p \normal G$, it follows that the $G_p$ action on $\nu_p$ must also be effective.

We recall the following results from~\cite{HS}.

\begin{theorem}[Slice Theorem] \label{l:slice}\cite{HS} Let $G$, a compact Lie group, act isometrically on an Alexandrov space $X$. Then for all $p \in X$, there is some $r_0 > 0$ such that for all $r < r_0$ there is an equivariant homeomorphism $\Phi \colon G \times_{G_p} K\nu_p \rightarrow B_r(G(p))$ where $\nu_p$ is the space of normal directions to the orbit $G(p)$.\end{theorem}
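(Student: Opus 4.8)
The plan is to build a \emph{slice} at $p$ out of the local cone structure and then assemble the tube $G \times_{G_p} K\nu_p$ around the orbit in the standard equivariant-topology fashion, the two genuinely metric inputs being Perelman's cone theorem and the join decomposition of $\Sigma_p$ recorded above. The construction is canonical; the work is in making it equivariant and in verifying that the resulting tube map is an embedding.

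First I would produce the slice. Since $G_p$ fixes $p$ and acts by isometries, it preserves each ball $B_r(p)$ and acts isometrically on $\Sigma_p$, and the decomposition of $\Sigma_p$ as the spherical join $S_{\mathrm{orb}} * \nu_p$ of the orbital directions $S_{\mathrm{orb}}$ (a round sphere, on which $G_p$ acts by the isotropy representation) and the normal directions $\nu_p$ is then $G_p$-equivariant. Using the canonical homeomorphism $K(A * B) \cong KA \times KB$, which respects any isometric action preserving the join, together with $K S_{\mathrm{orb}} \cong \rrr^{\dim G(p)}$, I obtain a $G_p$-equivariant splitting $K\Sigma_p \cong \rrr^{\dim G(p)} \times K\nu_p$. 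The cone point of the first factor is $G_p$-fixed, so the subset $S_p \subset K\Sigma_p$ corresponding to $\{0\} \times K\nu_p$ is $G_p$-invariant and $G_p$-equivariantly homeomorphic to $K\nu_p$.

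I then need Perelman's homeomorphism $B_r(p) \cong K\Sigma_p$ to be $G_p$-equivariant, so that $S_p$ is realised inside $B_r(p)$ as an honest $G_p$-invariant slice transverse to the orbit. This is the first delicate point: the cone coordinates are produced from the gradient flow of the $G_p$-invariant function $\dist(p, \cdot)$, and one must check the resulting homeomorphism can be chosen to intertwine the $G_p$-actions, i.e.\ an equivariant refinement of Perelman's construction. Granting this, define $\Phi \colon G \times_{G_p} K\nu_p \to X$ by $\Phi\left([g, v]\right) = g \cdot \sigma(v)$, where $\sigma \colon K\nu_p \to S_p \subset B_r(p)$ is the equivariant identification. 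Well-definedness is immediate from $G_p$-equivariance of $\sigma$: for $h \in G_p$ one has $g h^{-1} \cdot \sigma(hv) = g h^{-1} h \cdot \sigma(v) = g \cdot \sigma(v)$.

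It remains to show that, after shrinking $r$, the map $\Phi$ is a homeomorphism onto the tube $B_r(G(p)) = \bigcup_{g \in G} g B_r(p)$, and this is where I expect the main obstacle to lie. Everything reduces to the \emph{slice property}: that every orbit meeting $B_r(p)$ meets $S_p$ in exactly one $G_p$-orbit, equivalently that $g S_p \cap S_p$ accumulating at $p$ forces $g \in G_p$. I would argue this by contradiction: a sequence $g_i \notin G_p$ with $x_i, g_i x_i \in S_p$ and $x_i \to p$ has, by compactness of $G$, a convergent subsequence $g_i \to g_\infty$; since $S_p$ is closed and meets the orbit $G(p)$ only at $p$, the limit gives $g_\infty p = p$, so $g_\infty \in G_p$. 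Replacing $g_i$ by $g_\infty^{-1} g_i \to e$ (still outside $G_p$), the product structure $\rrr^{\dim G(p)} \times K\nu_p$ — in which the orbital $\rrr$-factor is precisely the orbit direction — forces such near-identity elements to displace points of $S_p$ off $S_p$, contradicting $\{0\}\times K\nu_p$ being $G_p$-fixed together with effectiveness of the $G_p$-action on $\nu_p$. Once the slice property holds, both injectivity and surjectivity onto $B_r(G(p))$ follow, and $\Phi$ is then a proper continuous bijection between locally compact Hausdorff spaces, hence a homeomorphism.
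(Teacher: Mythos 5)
You should first be aware that this paper contains no proof of the Slice Theorem: it is recalled verbatim from the authors' earlier work \cite{HS}, so the only comparison available is with the proof there, which rests on an equivariant version of Perelman's stability theorem proved separately by Harvey. Your overall strategy — the $G_p$-equivariant join decomposition $\Sigma_p \cong \sph^{k-1} * \nu_p$, an equivariant conical neighborhood $B_r(p) \cong K\Sigma_p \cong \rrr^k \times K\nu_p$, and the tube map $[g,v] \mapsto g\cdot\sigma(v)$ — mirrors that proof in outline. But your first ``delicate point,'' which you explicitly grant, is not a refinement one can defer: Perelman's conical neighborhood theorem is proved via his stability theorem, whose construction involves many non-canonical choices (strainers, framings, an induction on dimension), and the mere $G_p$-invariance of $\dist(p,\cdot)$ does not make any of those choices equivariant. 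Producing the $G_p$-equivariant homeomorphism is exactly the content of Harvey's equivariant stability theorem, applied to the $G_p$-equivariant blow-up convergence of the rescaled balls to the tangent cone; it is the deepest input to the whole statement, so assuming it leaves the proof incomplete precisely where the work lies.

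The more serious gap, however, is the one you do not flag: the slice property. You argue that the product structure $\rrr^k \times K\nu_p$, ``in which the orbital $\rrr$-factor is precisely the orbit direction,'' forces near-identity elements $g \notin G_p$ to move $S_p$ off itself. This does not follow, because the homeomorphism $B_r(p) \cong \rrr^k \times K\nu_p$ is only $G_p$-equivariant: it carries no information about how elements of $G \setminus G_p$ act, since such elements do not even preserve $B_r(p)$, and ``the orbit direction'' is an infinitesimal statement at $p$ that does not constrain where nearby orbits sit relative to the chosen product coordinates. Concretely, let $X = \rrr^2$ with $G = \SO(2)$ rotating about the origin and $p=(1,0)$, so that $G_p$ is trivial and \emph{every} homeomorphism $B_r(p) \cong \rrr \times K\nu_p$ is equivariant; one can choose it so that $\{0\} \times K\nu_p$ is carried onto a sub-arc of the unit circle, i.e.\ of the orbit $G(p)$ itself, and then the tube map fails injectivity as badly as possible. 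In the smooth category the slice property is rescued by the inverse function theorem (transversality at $p$ gives local injectivity of the tube map), a tool with no analogue here, and invariance of domain is not available for Alexandrov spaces in the required form. A correct argument must either build the conical homeomorphism compatibly with the $G$-action over the whole tube — which is what the equivariant stability machinery underlying \cite{HS} accomplishes — or invoke Mostow's general slice existence theorem for compact Lie group actions on completely regular spaces and then identify the resulting slice $G_p$-equivariantly with $K\nu_p$; your argument as written does neither.
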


\begin{lemma}\label{l:fixedpoint} \cite{HS} Let $T^k$ act by isometries on $X^{2n}$, a compact even-dimensional Alexandrov space of positive curvature. Then $T^k$ has a fixed point.\end{lemma}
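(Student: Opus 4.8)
The plan is to argue by induction on the rank $k$, reducing the torus case to a single circle and then using the fixed-point set of that circle as a lower-dimensional space on which the complementary torus acts. Suppose the statement holds for all tori of rank less than $k$, and let $T^k$ act on $X^{2n}$. Choose a circle subgroup $T^1 \leq T^k$ and a complementary subtorus $T^{k-1}$, so that $T^1 \cdot T^{k-1} = T^k$ with $T^1 \cap T^{k-1}$ finite. Since $T^k$ is abelian, $T^{k-1}$ preserves $F := \Fix(T^1, X)$, and as $T^{k-1}$ is connected it maps each component of $F$ to itself. If one knows that $F$ is nonempty and that each component is a compact, even-dimensional, positively curved Alexandrov space, then the induction hypothesis applied to the $T^{k-1}$-action on such a component yields a point fixed by $T^{k-1}$; being already fixed by $T^1$, it is fixed by $T^1 \cdot T^{k-1} = T^k$. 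Thus everything reduces to the circle case together with two structural facts about $\Fix(T^1)$.

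For the structural facts, fix $p \in \Fix(T^1, X)$. By the join decomposition of the space of directions, $\Sigma_p X = \Sigma_p\!\left(\Fix(T^1)\right) * N_p$, where $N_p$ is the positively curved Alexandrov space of directions pointing out of the fixed-point set, on which $T^1$ acts without fixed points. Counting dimensions, the codimension of $\Fix(T^1)$ at $p$ equals $\dim N_p + 1$, so $\Fix(T^1)$ is even-dimensional exactly when $\dim N_p$ is odd, i.e. exactly when a fixed-point-free circle action is forced to occur in odd dimension. That each component of $\Fix(T^1)$ is a positively curved Alexandrov space is the standard fact that components of fixed-point sets of isometric actions are totally geodesic and inherit the ambient lower curvature bound. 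Hence the whole lemma rests on the following \emph{core claim}, to be proved by induction on the (even) dimension: an orientation-preserving isometry of a compact, even-dimensional, positively curved Alexandrov space has a fixed point (equivalently, a fixed-point-free circle action forces odd dimension).

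To prove the core claim I would adapt Berger's variational argument. A topological generator $g$ of $T^1$ satisfies $\Fix(T^1) = \Fix(g)$, and $g$, lying in the connected group $T^1$, acts as the identity on top homology and so is orientation-preserving in the sense needed. Assume $\Fix(g) = \emptyset$. The displacement $p \mapsto \dist(p, gp)$ is then a positive continuous function on the compact $X$, attaining a positive minimum $\ell$ at some $p_0$. Applying the first-variation formula to a minimal geodesic $\sigma$ from $p_0$ to $gp_0$ shows that the differential $g_*$ carries the direction of $\sigma$ at $p_0$ to the forward continuation of $\sigma$ at $gp_0$; iterating, the broken path through $\dots, g^{-1}p_0, p_0, gp_0, g^2 p_0, \dots$ has no corners and is therefore a $g$-invariant geodesic, a ``$g$-axis'' of constant step length $\ell$.

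The main obstacle is extracting a contradiction from this axis, which is precisely where the Riemannian holonomy-and-second-variation step fails to transfer. The ``return'' isometry along one period, composed with $g_*$, is an isometry of $\Sigma_{p_0}$ fixing the axis direction, hence acting on the even-dimensional, positively curved space of directions perpendicular to the axis; by the core claim one dimension down it fixes some perpendicular direction $v$ (the base cases, $\dim \leq 2$, being elementary). Propagating $v$ along the axis and varying the axis in the direction of the resulting field, the second-variation inequality forced by $\curv \geq 1$ makes the displacement strictly decrease over one period, contradicting the minimality of $\ell$. Making this rigorous requires a synthetic replacement for parallel transport and for the second variation of arclength along a geodesic in an Alexandrov space — exactly the machinery underlying Petrunin's analogue of Synge's theorem — and supplying (or invoking) that synthetic second variation, together with the orientation bookkeeping for the return map, is the crux of the proof. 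With the core claim established, the two structural facts and the induction on $k$ complete the argument.
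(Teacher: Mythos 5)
Your proposal does not prove the lemma; it reduces it to a stronger unproved statement and then stops at exactly the hard point. Your ``core claim'' is a Synge--Weinstein-type fixed point theorem for isometries of compact, even-dimensional, positively curved Alexandrov spaces, and your proof of it is an outline of Berger's displacement-minimizing argument whose decisive step --- the synthetic parallel transport, second variation, and orientation bookkeeping for the return map --- you explicitly defer to ``the machinery underlying Petrunin's analogue of Synge's theorem.'' But Petrunin's published result is the fundamental-group statement recorded as Theorem \ref{GST}; carrying his second-variation machinery from closed geodesics over to axes of isometries is precisely the nontrivial content here, and it is what the paper cites \cite{HS} for. Indeed the paper does not reprove the lemma at all: it recalls it from \cite{HS} and remarks that in the only case it uses ($k=1$, $X^4$ orientable) the lemma is a \emph{straightforward consequence} of Theorem \ref{GST}. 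That soft deduction, which your approach misses, runs as follows: if $T^1$ acted with no fixed point, all isotropy groups would be finite, and by the Slice Theorem \ref{l:slice} together with compactness their orders would be bounded; choosing a prime $p$ larger than the bound, the subgroup $\zzz_p \subset T^1$ acts freely, so $X/\zzz_p$ is a compact, even-dimensional, positively curved Alexandrov space (quotients preserve the lower curvature bound, as recalled in Section \ref{s:alexandrov}), orientable because $\zzz_p$ lies in a connected group, and with $\pi_1\left(X/\zzz_p\right) \neq 1$ --- contradicting Theorem \ref{GST}. All the variational work is already packaged inside Theorem \ref{GST}; none of it needs to be redone.

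There is a second, independent gap: the join decomposition $\Sigma_p X = \Sigma_p\left(\Fix(T^1)\right) * N_p$ at a fixed point $p$ is false in general Alexandrov geometry. The join splitting recalled in Section \ref{s:equivalex} concerns the \emph{orbital} directions, which form a round unit sphere, and it is the presence of that round sphere which forces a join by rigidity; at a fixed point the orbit is a point and no such sphere exists. Concretely, let $T^1$ act on $\Susp\left(\rp^2\right)$ by suspending a rotation of $\rp^2$: at a suspension vertex the space of directions is $\rp^2$, the fixed directions form a single point, and $\rp^2$ is not a spherical join of a point with anything, since such a join is a spherical cone and has nonempty boundary. So your dimension count ``$\mathrm{codim} = \dim N_p + 1$,'' and with it the parity reduction, has no foundation as stated. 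The fact you want --- that components of $\Fix(T^1)$ have even codimension --- is true, but it is Proposition \ref{p:evencodim} (again from \cite{HS}), and its proof proceeds by induction on dimension through the spaces of directions (applying the circle case of the present lemma one dimension down), not through a normal join factor. Your outer induction on the rank $k$ is sound and is how the torus case is assembled once the circle case and Proposition \ref{p:evencodim} are in hand; everything before that, however, is either missing or incorrectly justified.
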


\begin{proposition}\label{p:evencodim}\cite{HS} Let $T^1$ act isometrically and effectively on $X^n$, a compact Alexandrov space. Then the components of the fixed-point set are of even codimension in $X^n$. \end{proposition}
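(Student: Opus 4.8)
The plan is to reduce everything to the space of directions at a fixed point and run an induction on $n=\dim X$, feeding the inductive step with the Slice Theorem and closing the ``no fixed directions'' case with Lemma~\ref{l:fixedpoint}. First I would fix a component $F$ of the fixed-point set and a point $p\in F$. Since $p$ is fixed, $G_p=T^1$ and the orbit $G(p)=\{p\}$, so the normal directions to the orbit are all of $\Sigma_p$. The Slice Theorem (Theorem~\ref{l:slice}) then yields an equivariant homeomorphism of a neighborhood of $p$ with the open cone $K\Sigma_p$, on which $T^1$ acts through its isometric isotropy action on $\Sigma_p$ and fixes the cone point. Being equivariant, this homeomorphism carries fixed sets to fixed sets, so near $p$ the set $F$ is identified with the cone on $\Fix(T^1,\Sigma_p)$; in particular $\Sigma_pF=\Fix(T^1,\Sigma_p)$. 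When this fixed set is nonempty we get
\[
\operatorname{codim}_X F \;=\; \dim\Sigma_p - \dim\Fix(T^1,\Sigma_p),
\]
and when it is empty $p$ is isolated in $F$ and $\operatorname{codim}_X F = n$. Either way the parity of $\operatorname{codim}_X F$ is governed entirely by the induced isometric $T^1$ action on the compact $(n-1)$-dimensional space $\Sigma_p$, which carries $\curv\geq 1$.

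With this reduction in hand I would split into two cases. If $T^1$ acts on $\Sigma_p$ without fixed points, then, since a torus acting on a compact even-dimensional positively curved Alexandrov space must have a fixed point by Lemma~\ref{l:fixedpoint}, the space $\Sigma_p$ is odd-dimensional; hence $n$ is even and $\operatorname{codim}_X F = n$ is even. If instead $T^1$ fixes some direction, then the displayed quantity is exactly the codimension of the fixed component $\Sigma_pF$ inside $\Sigma_p$. After factoring out the kernel of the action on $\Sigma_p$ (which is either trivial—in which case the codimension is $0$—or finite cyclic, leaving an effective $T^1$ action with the same fixed set), this is precisely an instance of the proposition in dimension $n-1$, and is therefore even by the inductive hypothesis. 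The base case is immediate: once $\dim\Sigma_p=0$ the connected group $T^1$ must fix the finitely many directions, so the codimension is $0$.

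The step requiring the most care—and the main obstacle—is the passage to the space of directions: I must be certain that the equivariant cone structure of the Slice Theorem identifies $F$ near $p$ with the cone on $\Fix(T^1,\Sigma_p)$, and that the dimension bookkeeping $\operatorname{codim}_X F = \dim\Sigma_p - \dim\Fix(T^1,\Sigma_p)$ is correct, so that the parity of the codimension is genuinely inherited from the lower-dimensional problem on $\Sigma_p$. I would also want to confirm that this local codimension is the same at every $p\in F$, so that speaking of the codimension of the component $F$ is meaningful. Once the reduction is secured, the two ingredients combine cleanly: the cone structure transports the problem to $\Sigma_p$, the induction handles the case with fixed directions, and Lemma~\ref{l:fixedpoint} forces even dimension precisely when there are none.
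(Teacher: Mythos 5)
Your argument is correct and is essentially the proof behind the statement: this paper records Proposition \ref{p:evencodim} without proof, recalling it from \cite{HS}, where it is established by exactly your induction --- the Slice Theorem identifies a neighborhood of a fixed point with the cone on $\Sigma_p$ (so the fixed set near $p$ is the cone on $\Fix(T^1,\Sigma_p)$), Lemma \ref{l:fixedpoint} forces $\Sigma_p$ to be odd-dimensional when the isotropy action is fixed-point free, and induction on dimension handles the remaining case. Two incidental points: your parenthetical about the kernel is unnecessary (and slightly garbled), since the paper already notes in Section \ref{s:equivalex} that effectiveness of the action on $X$ forces the isotropy action of $T^1 = G_p$ on $\Sigma_p$ to be effective; and your appeal to Lemma \ref{l:fixedpoint} introduces no circularity, because that lemma is derived from Petrunin's Generalized Synge Theorem rather than from even codimension of fixed-point sets.
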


An action of $G$ on $X$ such that $X^G \neq \emptyset$ and $\dim(X^G) = \dim (X/G) -1$ is called \emph{fixed-point homogeneous}. The following description of positively curved fixed-point-homogeneous Alexandrov spaces from \cite{HS} is very useful in our classification. Since $\nu$ might be any positively curved space, this result of the authors shows that, in marked contrast to the Riemannian setting, fixed-point homogeneity is not a very restrictive concept in Alexandrov geometry.

\begin{theorem}[Fixed-point-homogeneous actions]\label{t:fph}\cite{HS} 
Let $G$, a compact Lie group, act isometrically and fixed-point-homogeneously on $X$, an Alexandrov space with $\curv \geq 1$.
If $F$ is the component of $X^G$ with maximal dimension then the following hold:
\begin{enumerate}
\item There is a unique orbit $G(p)\cong G/G_p$ at maximal distance from $F$ (the ``soul" orbit);
\item  The space $X$ is homeomorphic to  
$(\nu *G)/G_p$, where $\nu$ is the space of normal directions to the orbit at $p$ and $G_p$ acts on the left on $\nu*G$, the action on $\nu$ being the isotropy action at $p$ and the action on $G$ being the inverse action on the right; and
\item The above homeomorphism is in fact $G$-equivariant, where the action of $G$ on $(\nu *G)/G_p$ is induced by the left action on $\nu * G$ given by the join of the trivial action and the left action.
\end{enumerate}
\end{theorem}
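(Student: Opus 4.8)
The plan is to descend to the orbit space $\bar{X} = X/G$, run a soul-type argument there, and then lift the resulting structure back to $X$ equivariantly. Since the class of spaces with $\curv \geq 1$ is closed under isometric quotients, $\bar{X}$ is again a compact Alexandrov space with $\curv \geq 1$, of dimension $m = \dim(X/G)$, and the fixed-point-homogeneity hypothesis says exactly that $\bar{F} := \pi(F)$ has dimension $m-1$. First I would use the Slice Theorem at a point $x \in F$: there $\Sigma_x X = \Sigma_x F * S$, where $S$ is the space of normal directions, $\dim S$ equals the dimension of a principal orbit, and $G$ fixes $\Sigma_x F$ while acting on $S$ with principal orbits of full dimension. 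Hence $S = G/G_p$ is a single homogeneous orbit, $S/G$ is a point, and $\Sigma_{\bar{x}} \bar{X}$ is the cone on $\Sigma_x F$. This identifies $\bar{F}$ as (a component of) the boundary $\partial \bar{X}$, so that $\bar{X}$ is locally a half-space along $\bar{F}$.

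With $\bar{F}$ in the role of a boundary, I would study the distance function $\bar{f} = \dist(\cdot, \bar{F})$, which by compactness attains a maximum $R$ on a nonempty set $\bar{C}$. The heart of part (1) is to show that $\bar{C}$ is a single point. Here I would invoke Toponogov comparison against $\sph^2(1)$: if $\bar{p} \neq \bar{q}$ both realized the maximum, the geodesic $[\bar{p}\,\bar{q}]$ together with minimal segments from $\bar{p}$ and $\bar{q}$ to $\bar{F}$ would produce a configuration whose comparison quadrilateral on the unit sphere is impossible, the strict positivity of the curvature forcing $\bar{p} = \bar{q}$. This is the Alexandrov analogue of the Grove–Searle uniqueness argument. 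Pulling back through $\pi$, the fibre $\pi^{-1}(\bar{p}) = G(p)$ is then the unique orbit at maximal distance from $F$, which is part (1).

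For parts (2) and (3) I would reconstruct $X$ from the two critical sets $F$ and $G(p)$. Away from the extreme values $0$ and $R$, the function $\bar{f}$ has no critical points, so Perelman's theory supplies a gradient (Sharafutdinov-type) flow giving a deformation retraction and a topological bundle structure on the level sets; this exhibits $\bar{X}$ as the union of a cone neighborhood of $\bar{p}$ and a collar of $\bar{F} = \partial \bar{X}$, glued along a common level set. Lifting this decomposition and keeping track of the $G$-action, the neighborhood of the soul orbit is $G \times_{G_p} K\nu_p$ by the Slice Theorem, and the join $(\nu_p * G)/G_p$ arises precisely as the union of this disk bundle with the cone swept out by the gradient flow back to $F$. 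The two $G_p$-factors are the isotropy action on $\nu_p$ and the inverse translation action on $G$, while the global $G$-action descends from the join of the trivial action on $\nu_p$ with left translation on $G$, giving the equivariance asserted in (3); a dimension count ($\dim F + \dim G + 1 - \dim G_p = n$) confirms consistency.

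The main obstacle is the uniqueness in part (1). Unlike the Riemannian case, where the totally geodesic fixed-point set and a smooth distance function make the soul argument routine, here $\bar{X}$ may be genuinely singular and $\bar{f}$ only semiconcave, so the comparison must be run purely metrically, and I must rule out the maximal-distance set acquiring positive dimension; this is the single place where $\curv \geq 1$, rather than merely $\curv \geq 0$, is essential. A secondary difficulty is making the gluing homeomorphism global and $G$-equivariant rather than merely local near the soul orbit, which requires carrying out the gradient flow equivariantly and matching it with the Slice Theorem identification along the common level set.
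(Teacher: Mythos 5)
This theorem is never proved in the present paper at all: it is imported verbatim from the authors' earlier work \cite{HS}, so your proposal has to be measured against that proof. Your overall architecture does match it (pass to $\bar{X}=X/G$, use the join splitting of $\Sigma_x X$ at fixed points to place $\bar{F}$ in $\partial\bar{X}$, produce a unique soul orbit, then reassemble $X$ equivariantly from a slice neighborhood of $G(p)$ and the region flowing back to $F$). The genuine gap is in the step you yourself identify as the heart of the matter, the uniqueness in part (1). You propose to obtain it by pure Toponogov comparison in $\bar{X}$, using only $\curv \geq 1$ and $\bar{F}\subset\partial\bar{X}$, claiming that a two-maxima ``quadrilateral'' is impossible in the spherical model. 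It is not. Take $Y$ to be the closed upper hemisphere of $\sph^2(1)$ and $A\subset\partial Y$ the closed half of the equator with $y\geq 0$: every point of the quarter-circle $\left\{(0,-\sin s,\cos s) : s\in[0,\pi/2]\right\}$ realizes the maximal distance $\pi/2$ from $A$, so the farthest-point set is a whole arc, and all of the associated quadrilateral configurations are realized in constant curvature $1$. Hence ``$\curv\geq 1$ plus $\bar{F}$ lies in the boundary'' cannot force uniqueness, and no purely metric comparison at that level of generality will close the argument; note also that your remark that this is where $\curv\geq 1$ rather than $\curv\geq 0$ is essential misses the point, since the counterexample has curvature exactly $1$.

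What is missing is precisely the structural fact you derived in your first paragraph and then never fed into the comparison: at \emph{every} point $\bar{x}\in\bar{F}$, including the points of $\partial\bar{F}$ where other boundary faces attach, $\Sigma_{\bar{x}}\bar{X}$ is the spherical cone on $\Sigma_{\bar{x}}\bar{F}$, so $\bar{F}$ has a unique normal direction everywhere. This is exactly what fails in the example above (at the endpoints of $A$ the space of directions is an arc of length $\pi$, not $\pi/2$), and it is the input that makes $\dist(\bar{F},\cdot)$ behave like the distance from the full boundary of a $\curv\geq 1$ space, whose maximum point is unique (the fact recalled at the start of Section \ref{s:amsr3}); any correct proof, including the one in \cite{HS}, must exploit it, whereas in your sketch the key step is in effect only asserted. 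Two smaller points: at the stage of your first paragraph you cannot write $S=G/G_p$, since the isotropy group of a normal direction at a fixed point is identified with (a conjugate of) a subgroup of $G_p$ only after the join structure is in hand; and in parts (2)--(3) the Perelman--Sharafutdinov flow, being canonical, is indeed $G$-invariant, but it only gives a deformation retraction --- the equivariant product structure on the region between the slice neighborhood and $F$ must come from the stability/fibration theorem upgraded by Palais' Covering Homotopy Theorem, as in Proposition \ref{p:suspension} of this paper, not from ``carrying out the gradient flow equivariantly.''
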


Consider the subset $\bar{X}_{(H)} \subset \bar{X}$ which is the image of all orbits with isotropy subgroup conjugate to $H \subgp G$. Perelman and Petrunin~\cite{PePet} have shown that its closure $\cl \left( \bar{X}_{(H)} \right)$ is an \emph{extremal subset} of $\bar{X}$, that is, a closed subset which is preserved under the gradient flow of $\dist(p,\cdot)$ for all $p \in \bar{X}$. Extremal sets stratify an Alexandrov space into topological manifolds. Extremal sets of codimension one make up the boundary. A primitive $1$-dimensional extremal set, that is, one which cannot be expressed as a union of proper subsets which are also extremal, is a curve, which is either closed or terminates in $0$-dimensional extremal sets. 

It is clear from the Relative Stability Theorem of Kapovitch~\cite{K} that, if the Alexandrov space $X$ is a topological manifold and $E$ is an extremal subset, the top stratum of $E$, that is, the complement in $E$ of any strictly smaller extremal subsets, is a locally flat submanifold (cf.~\cite[Lemma 5.1]{GW2}). Recall that a \emph{locally flat submanifold} is the topological analogue of an embedded submanifold in the differential category. More precisely, if $M^n$ is a manifold then $N^k \subset M^n$ is locally flat if, for each $p \in N$, there is a neighborhood $U$ of $p$ so that $(U, U \cap N) \cong (\rrr^n, \rrr^k)$, where $\rrr^k$ is included in $\rrr^n$ in the standard way.

\subsection{Assigning unnormalized invariants to Seifert manifolds}\label{s:seifert}

In Section \ref{s:invariants} we  assign invariants to $T^1$-actions on orientable Alexandrov $4$-spaces. The $4$-spaces under consideration there have isolated fixed points. Let $p$ be any such isolated fixed point. Then Proposition \ref{p:GGG} implies $\Sigma_p$  is homeomorphic to $\sph^3 / \Gamma$. The isotropy action defines a Seifert fibration on $\Sigma_p$.

For this reason, Seifert fibrations are key to defining the invariants we  need to classify these $4$-spaces, and so we establish here notational conventions for the invariants of Seifert manifolds. 
Note that we do not include the Seifert invariants related to orientability or the presence of fixed points, since these are not  relevant here.

We  follow the approach of Jankins and Neumann~\cite{JN}, since it is better suited to the present work than the normalized Seifert invariants which were used by Fintushel~\cite{F1} to classify $4$-manifolds with circle actions. We collect all of the notation into Table \ref{table} at the end of this subsection for easy reference.

Let $T^1$ act on $Z^3$, an oriented $3$-manifold, without fixed points.  Let $\pi \colon Z \to \bar{Z}=Z/T^1$ be the projection map. Let $g$ be the genus of $\bar{Z}$, which for the purposes of this paper is always zero. Orient $\bar{Z}$ so that its orientation followed by the orientation of the orbits gives the orientation of $Z^3$.

Let $\left\{ \bar{x}_1, \ldots, \bar{x}_n \right\} \subset \bar{Z}$ be the image of a non-empty collection of orbits, which includes all of the exceptional orbits. In fact, it is sufficient to choose only the exceptional orbits unless this produces an empty collection, in which case the image of one principal orbit suffices. Choose disjoint $2$-disk neighborhoods, $\bar{V}_i$, of each $\bar{x}_i$. Orient $\partial \bar{V}_i$ by its inward normal. The closure of the complement of the disks is denoted by $\bar{R} = \cl \left( \bar{Z} \setminus \left( \bigcup_{i=1}^n \bar{V}_i \right) \right)$ and is the surface of genus $g$ with $n$ boundary components. See Figure \ref{f:zbar}.

\begin{figure}
	\begin{tikzpicture}
	\node[anchor=south west,inner sep=0] (image) at (0,0) {\includegraphics[width=0.8\textwidth]{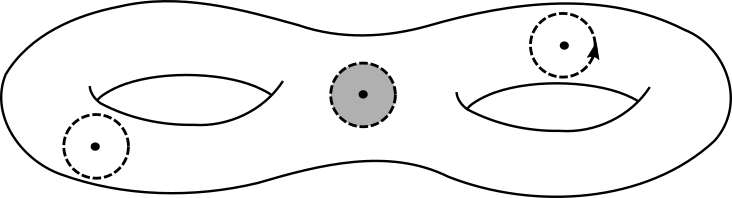}};
	\begin{scope}[
	x={(image.south east)},
	y={(image.north west)}
	]
	\node  at (0.23,0.13) {$\bar{x}_1$};
	\draw [->, thick] (0.2,0.16) to [bend right] (0.14,0.26);
	\node  at (0.4,0.3) {$\bar{V}_2$};
	\draw [->, thick] (0.4,0.4) to [bend left=60] (0.48,0.46);
	\node  at (0.86,0.73) {$\partial \bar{V}_3$};
	\node  at (0.3,0.8) {$\bar{R}$};
	\end{scope}
	\end{tikzpicture}
	\caption{The orbit space $\bar{Z}$ in the case $g=2$, $n=3$.}
	\label{f:zbar}
\end{figure}

The preimage of the disk, $V_i = \pi^{-1} (\bar{V}_i) \cong \disk^2 \times \sph^1$, is a solid torus. Orient the boundary torus $\partial V_i \cong \T^2$ by its inward normal. 
Let $x_i \in \pi^{-1}(\bar{x}_i)$, the core of $V_i$. Denote  the slice at $x_i$ by $S_i \cong \disk^2$, and note that $\pi(S_i) = \bar{V}_i \cong \disk^2$ is the quotient of $S_i$ by the finite cyclic group $T^1_{x_i}$. Orient $S_i$ so that its intersection number with $T^1(x_i)$ is $\I{S_i}{T^1(x_i)}=+1$. 

Let $m_i$ be the boundary curve of $S_i$, oriented by its inward normal. This curve is a meridian of  $\partial V_i$, that is, a curve representing a generator of $H_1 (\partial V_i ; \zzz)$ which bounds a disk in $V_i$. Let $l_i$ be a longitudinal curve on $\partial V_i$, chosen so that $\I{l_i}{m_i}=+1$. Note that this does not uniquely specify $l_i$, which can be varied by adding any multiple of $m_i$. Further,  we  freely identify oriented curves with the corresponding elements of $H_1 (\partial V_i ; \zzz)$. 

The pair $\left( l_i, m_i \right)$ now forms an oriented basis for $H_1 (\partial V_i ; \zzz)$. The homology class of any other closed curve $t$ in $\partial V_i$ may be written  as $t = a l_i + b m_i$ for some $a, b \in \zzz$. The coefficients then give the intersection numbers $\I{t}{m_i} = a$ and $\I{t}{l_i} = -b$.

Let $h_i$ be an oriented principal orbit on $\partial V_i$. Writing $h_i = \alpha_i l_i + \gamma_i m_i$, we have that $\I{h_i}{m_i}=\alpha_i$ is the order of the isotropy group at $x_i$ (see Figure \ref{f:lmh}) and this gives one Seifert invariant for the orbit over $\bar{x}_i$. 

\begin{figure}
	\begin{tikzpicture}
	\node[anchor=south west,inner sep=0] (image) at (0,0) {\includegraphics[width=0.7\textwidth]{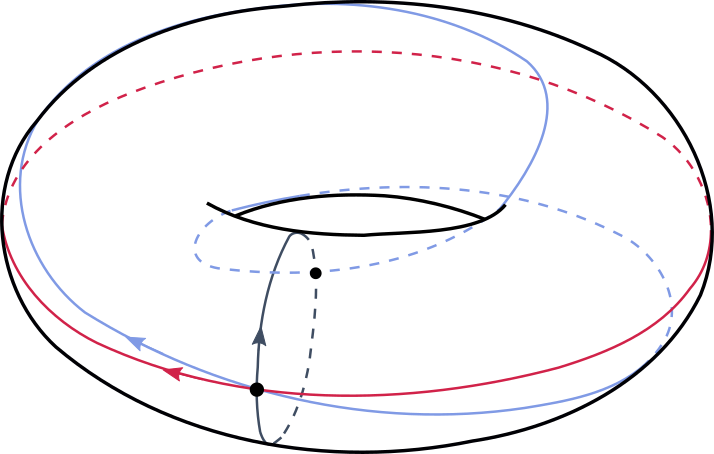}};
	\begin{scope}[
	x={(image.south east)},
	y={(image.north west)}
	]
	\node  at (0.24,0.13) {$l_i$};
	\node  at (0.475,0.26) {$m_i$};
	\node  at (0.08,0.5) {$h_i$};
	\end{scope}
	\end{tikzpicture}
	\caption{A meridian, $m_i$, 
	 longitude, $l_i$, 
	 and principal orbit, $h_i$, 
	  on $\partial V_i$. 
	The intersection number of $h_i$ with $m_i$ is $\alpha_i = 2$, 
	the order of the isotropy at the exceptional orbit. 
	}
	\label{f:lmh}
\end{figure}

We remove the neighborhoods $\bar{V}_1, \ldots, \bar{V}_n$ from $\bar{Z}$ to obtain the surface with boundary $\bar{R} = \cl \left( \bar{Z} \setminus \left( \bigcup_{i=1}^n \bar{V}_i \right) \right)$. Recall that principal $T^1$ bundles over a base $B$ are classified by $H^2(B)$, so all principal $T^1$ bundles over surfaces with boundary are trivial. Thus, we can
choose a section, $\sigma \colon \bar{R} \to R$. Consider the curves $q_i := \sigma(\partial \bar{V}_i)$ for $i=1,\ldots, n$. These curves lie on the tori $\partial V_i$ and, since they are sections, they intersect each principal orbit on $\partial V_i$ exactly once, so $\I{h_i}{q_i} = \pm 1$.

We orient the $q_i$ so that $\I{h_i}{q_i} = + 1$.
The $q_i$ are the boundary components of the image of the section, $\sigma\left(\bar{R}\right)$, but to achieve this sign convention they must be oriented by the outward normal to $\sigma\left(\bar{R}\right)$. That is, the orientation of the $q_i$ is that which agrees with $\partial \bar{V}_i$ as the boundary of $\bar{V}_i$.

The pair $\left( h_i, q_i \right)$ form a second basis of $H_1 (\partial V_i ; \zzz)$, with the same orientation, and so they can be related by some element of $\SL(2,\zzz)$,
$$ \begin{pmatrix}
h_i \\ q_i
\end{pmatrix}
=
\begin{pmatrix}
	\alpha_i & \gamma_i \\ -\beta_i & \delta_i
\end{pmatrix}
\begin{pmatrix}
l_i \\ m_i
\end{pmatrix}.$$

Since $q_i = -\beta_i l_i + \delta_i m_i$, we have that $\beta_i = \I{m_i}{q_i}$ is independent of our choice of longitude, and so we obtain a second Seifert invariant for the orbit. By choosing a different section, $\sigma'$, the class $q_i$ may be altered by adding a multiple of $h_i$,  although in this case at least some of the other $q_j$ for $j \neq i$ must also change. The invariant $\beta_i$ is therefore only determined up to a multiple of $\alpha_i$. 

Since $\left(\begin{smallmatrix}\alpha_i & \gamma_i \\ -\beta_i & \delta_i\end{smallmatrix}\right) \in \SL(2,\zzz)$, we have that $\gcd(\alpha_i, \beta_i) = 1$. 
Furthermore, inverting the matrix, we can write $m_i = \alpha_i q_i + \beta_i h_i$.

The pair $(\alpha_i, \beta_i)$ are the Seifert invariants for the orbit $T^1 (x_i)$ and we say that the Seifert invariants of the $T^1$ action on $Z^3$ are $$\left\{g;  (\alpha_1, \beta_1), \ldots, (\alpha_n, \beta_n)  \right\}.$$

These invariants are not uniquely determined by the action, not only because the index set may be permuted, but also for a more significant reason, already noted above: the $\beta_i$ depend on the choice of section $\sigma$. However, the generalized Euler number given by the sum 
\begin{equation}\label{e:eulernumber}
e = - \sum_{i=1}^n \frac{\beta_i}{\alpha_i}
\end{equation}
is independent of the choice of $\sigma$. 

In Proposition \ref{p:loopandspur}, we  calculate the orbifold fundamental group of some $4$-spaces. A necessary ingredient in this calculation is the fundamental group of a Seifert manifold. The following theorem shows how the Seifert invariants may be used to compute the fundamental group of the manifold. See Theorem 6.1 in~\cite{JN} for the proof. We give the result below for the case $g=0$, since this is the only case we are interested in here.

\begin{theorem}\label{t:pi1}
	Let $Z^3$ be a Seifert manifold with invariants $$\left\{0;  (\alpha_1, \beta_1), \ldots, (\alpha_n, \beta_n)  \right\}.$$ Then the fundamental group is given by
	$$ \pi_1(Z) = \left \langle q_1, \ldots, q_n, h \st \left[ q_i, h \right] = 1, q_i^{\alpha_i}h^{\beta_i} = 1, q_1 q_2 \cdots q_n = 1 \right\rangle.$$
\end{theorem}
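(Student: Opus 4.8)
=== PROOF PROPOSAL (from start of paper through the statement) ===

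\medskip

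The plan is to compute the fundamental group of the Seifert manifold $Z^3$ using the decomposition already set up in this subsection, namely $Z = R \cup V_1 \cup \cdots \cup V_n$, where $R = \pi^{-1}(\bar{R})$ is the trivial principal $T^1$-bundle over the genus-$0$ surface with $n$ boundary components, and each $V_i$ is a solid torus glued along $\partial V_i$. The strategy is a van Kampen argument combined with a computation of $\pi_1$ of each piece, bookkeeping everything in terms of the homology bases $(h_i, q_i)$ and $(l_i, m_i)$ that have already been fixed on each $\partial V_i$.

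\medskip

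First I would identify $\pi_1(R)$. Since $g = 0$, the base $\bar{R}$ is a disk with $n-1$ holes, which is homotopy equivalent to a wedge of $n-1$ circles, and the trivial bundle gives $R \cong \bar{R} \times T^1$, so $\pi_1(R)$ is the direct product of a free group on $n-1$ generators with the infinite cyclic group generated by the fiber class $h$. Taking the boundary sections $q_i = \sigma(\partial \bar{V}_i)$ as the free generators, and using that the boundary circles of a planar surface satisfy the single relation $\partial \bar{V}_1 \cdots \partial \bar{V}_n \simeq 1$ (the product of the boundary loops bounds), I get generators $q_1, \ldots, q_n, h$ subject to $[q_i, h] = 1$ (the fiber is central, as the bundle is trivial and $h$ is the orbit direction) and $q_1 q_2 \cdots q_n = 1$. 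This already produces all but one family of the stated relations.

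\medskip

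Next I would bring in the solid tori. Each $V_i \cong \disk^2 \times \sph^1$ has $\pi_1(V_i) \cong \zzz$, generated by its core, i.e.\ by the longitude, and the attaching map kills precisely the meridian $m_i$ of $\partial V_i$, since $m_i$ bounds the slice disk $S_i$ in $V_i$. Van Kampen then tells me that gluing $V_i$ to $R$ along the torus $\partial V_i$ adds the relation ``$m_i = 1$'' to the presentation, where $m_i$ must be rewritten in the generators of $\pi_1(R)$. The essential input is the change-of-basis relation derived earlier in the subsection: inverting the $\SL(2,\zzz)$ matrix gives $m_i = \alpha_i q_i + \beta_i h_i$ in $H_1(\partial V_i; \zzz)$, and since $\partial V_i$ is a torus with abelian fundamental group I can read this as the word $q_i^{\alpha_i} h^{\beta_i}$ (noting $h_i$ maps to the central fiber class $h$). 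Setting this to $1$ yields exactly the relations $q_i^{\alpha_i} h^{\beta_i} = 1$.

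\medskip

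Assembling these via iterated van Kampen over the $n$ gluings gives the presentation in the statement. The main obstacle — and the point demanding the most care — is the orientation and basepoint bookkeeping: ensuring that the meridian really reads as $q_i^{\alpha_i} h^{\beta_i}$ rather than, say, its inverse or a variant twisted by the choice of longitude, and that the planar relation $q_1 \cdots q_n = 1$ carries the correct orientations (the $q_i$ oriented by the outward normal to $\sigma(\bar{R})$, as fixed above). Since the full argument is standard Seifert-fibration theory, I would simply cite \cite[Theorem 6.1]{JN} for the precise statement and restrict the verification to confirming that the sign conventions established in this subsection match those of Jankins and Neumann, so that the invariants $(\alpha_i, \beta_i)$ plug in correctly.
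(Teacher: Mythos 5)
Your proposal is correct and in substance matches the paper, which gives no argument of its own but simply points to \cite[Theorem 6.1]{JN}; the van Kampen computation you sketch on the decomposition $Z = R \cup V_1 \cup \cdots \cup V_n$ (trivial circle bundle over the planar surface, then Dehn filling each $\partial V_i$ along the meridian $m_i = \alpha_i q_i + \beta_i h_i$) is precisely the standard proof of that cited result, with the conventions of Section \ref{s:seifert} entering exactly as you describe. Your closing move of deferring to the citation for the orientation and sign bookkeeping is also exactly what the paper does.
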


Note that Seifert invariants are usually normalized \cite{Orlik}. The most frequent convention is to impose the constraint $0 < \beta_i < \alpha_i$ and to include an additional principal orbit of type $(1,b)$, so that we still have $b + \sum_{i=1}^n \frac{\beta_i}{\alpha_i} = -e$. 

The most natural point of view to take in understanding the normalized invariants is that they arise from making the choice of the sections $q_i$ first in order to satisfy the constraints $0 < \beta_i < \alpha_i$. The obstruction to extending the $q_i$ to a global section $\sigma$ is then given by $b$.
However, in the present work it is convenient to fix the global section $\sigma$ first, and so unnormalized invariants are more suitable.

\begin{table}[htbp]\caption{Table of notation for unnormalized Seifert invariants}\label{table}
	\centering 
	\begin{tabular}{r p{10.7cm} }
		
		$Z$ & A $3$-manifold with a fixed-point-free $T^1$ action.\\
		$\bar{Z}$ & Its orbit space, $Z/T^1$, a surface with singular points.\\
		$g$ & The genus of the surface $\bar{Z}$.\\
		$\bar{x}_i$ & For $i=1,\ldots,n$, a collection of points in $\bar{Z}$ which includes the image of each exceptional orbit $T^1 \left( x_i \right)$. \\
		$\bar{V}_i$ & A $2$-disk neighborhood of $\bar{x}_i$. \\ 
		${V}_i$ & Its preimage, a solid torus neighborhood of $T^1 \left( {x}_i \right)$. \\
		$\bar{R}$ & The complement of the $\bar{V}_i$, $\cl \left( \bar{Z} \setminus \left( \bigcup_{i=1}^n \bar{V}_i \right) \right)$, a surface with boundary.\\
		$\sigma$ & A section of the action defined on $\bar{R}$.\\
		$S_i$ & The slice at some $x_i$, oriented so that $\I{S_i}{T^1(x_i)}=+1$.\\
		$m_i$ & The boundary of $S_i$, a meridian on $\partial V_i$.\\
		$l_i$ & A longitude on $\partial V_i$ chosen so that $\I{l_i}{m_i}=+1$.\\
		$h_i$ & An oriented principal orbit on $\partial V_i$.\\
		$q_i$ & The oriented curve $\sigma (\partial \bar{V}_i) \subset \partial V_i$, which depends on $\sigma$ and is a section for the action on $\partial V_i$ such that $\I{h_i}{q_i} = +1$.\\
		$\alpha_i$ & The order of the isotropy at $x_i$, given by $\I{h_i}{m_i}$.\\
		$\beta_i$ & The intersection number $\I{m_i}{q_i}$, which depends on the choice of $\sigma$.\\
		
	\end{tabular}
	\label{tab:notation}
\end{table}

\section{Singularities and positive curvature}\label{s:singularities}

\subsection{Extents of spaces of directions}

For a metric space $X$, we define the $q$-extent of $X$ to be the maximum of the average distance function among $q$ points in $X$:
$$ \xt_q(X)= \max_{\{x_1, x_2,\ldots ,x_q\}\subset X}  \frac{1}{\binom{q}{2}} \sum_{i<j}  \dist(x_i, x_j).$$

In particular, the following lemma, originally given for the case where $X$ is a quotient of a Riemannian manifold but stated below in full generality, provides a particularly nice application of extents to control the number of highly singular points in the presence of positive curvature.

\begin{lemma}[Extent Lemma~\cite{GS2, GM}]\label{extent}   Let $X$ be an Alexandrov space. For any choice of $(q + 1)$ distinct points $p_0, \ldots, p_q \in X$ one has
	$$\frac{1}{q+1} \sum_{i=0}^q \xt_q \left( \Sigma_{p_i} X \right) \genfrac{}{}{0 pt}{}{>}{(=)}\, \frac{\pi}{3}$$
	whenever one has $\curv(X) \genfrac{}{}{0 pt}{}{>}{(=)} 0$.
\end{lemma}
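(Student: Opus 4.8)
The plan is to produce the lower bound from a single explicit configuration of $q$ points inside each space of directions, and then to sum over the $q+1$ base points, regrouping the angles that appear according to the geodesic triangles they belong to.

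First I would record the two inputs that carry the argument. Since $X$ is compact, any two of the points are joined by a minimizing geodesic, so for each ordered pair $(i,j)$ with $i \ne j$ there is a direction $\uparrow_{p_i}^{p_j} \in \Sigma_{p_i} X$ pointing from $p_i$ toward $p_j$, lying in $\Sigma_{p_i}X$ as a limit of geodesic directions. By the definition of the metric on the space of directions, the distance in $\Sigma_{p_i} X$ between $\uparrow_{p_i}^{p_j}$ and $\uparrow_{p_i}^{p_k}$ is the angle $\angle_{p_i}(p_j,p_k)$. The second input is the angle comparison supplied by the lower curvature bound: for the geodesic triangle with vertices $p_a, p_b, p_c$, each interior angle is at least the corresponding angle of the Euclidean comparison triangle on the same three side lengths, so the three interior angles sum to at least $\pi$ — and strictly more than $\pi$ when $\curv(X) > 0$, since the comparison triangle then lives on a round sphere and carries positive angle excess.

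With these in hand, fix $i$ and note that $\{\,\uparrow_{p_i}^{p_j} : j \ne i\,\}$ is a configuration of $q$ points in $\Sigma_{p_i}X$. By the definition of the $q$-extent as a maximum over such configurations,
$$ \xt_q(\Sigma_{p_i} X) \;\geq\; \frac{1}{\binom{q}{2}} \sum_{\substack{j<k\\ j,k \ne i}} \angle_{p_i}(p_j,p_k). $$
Summing over $i=0,\dots,q$ and regrouping the double sum by unordered triples $\{a,b,c\} \subset \{0,\dots,q\}$, each triple contributes precisely the three interior angles of the triangle $p_a p_b p_c$, whose sum is at least $\pi$ by the comparison above. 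As there are $\binom{q+1}{3}$ triples, this gives
$$ \sum_{i=0}^q \xt_q(\Sigma_{p_i} X) \;\geq\; \frac{1}{\binom{q}{2}} \binom{q+1}{3}\, \pi. $$

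Finally I would simplify, using $\binom{q+1}{3}\big/\binom{q}{2} = (q+1)/3$, and divide through by $q+1$ to reach the stated bound of $\pi/3$, with strict inequality exactly when the triangles carry positive angle excess, i.e.\ when $\curv(X)>0$. The only real care needed — and hence the main, if modest, obstacle — lies in the non-smooth bookkeeping: verifying that the directions toward the other points indeed lie in $\Sigma_{p_i}X$ and that the Euclidean (respectively spherical) angle comparison holds in the Alexandrov category, which is exactly what the lower curvature bound provides. Non-uniqueness of minimizing geodesics is harmless, since any choice yields an admissible $q$-point configuration, and that is all the extent lower bound requires.
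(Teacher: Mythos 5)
A preliminary remark: the paper never proves Lemma \ref{extent} at all --- it is imported from \cite{GS2,GM} --- so your proposal can only be measured against the standard argument in those references, which is exactly the one you reconstruct: use the directions toward the other $q$ points as a test configuration in each $\Sigma_{p_i}$, bound angles below by comparison angles, and regroup the double sum by triples. For the non-strict statement ($\curv \geq 0$) your proof is correct and complete: the identity $\binom{q+1}{3}\big/\binom{q}{2} = (q+1)/3$ is right, and each angle dominates its Euclidean comparison angle, whose per-triangle sum is exactly $\pi$.

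The gap is in the strict statement, which is the only form the paper actually uses (Proposition \ref{p:eulerbound} needs the \emph{strict} inequality to exclude a fourth point with small space of directions). You claim that when $\curv(X)>0$ every comparison triangle ``carries positive angle excess.'' This is false for degenerate triples: if $p_b$ lies on a minimizing geodesic from $p_a$ to $p_c$, so that $\dist(p_a,p_c)=\dist(p_a,p_b)+\dist(p_b,p_c)$, then the spherical comparison triangle is itself degenerate, with angles $0,\pi,0$ summing to exactly $\pi$. Such triples genuinely occur in positive curvature --- e.g.\ three points on a minimizing arc of a great circle in the round $\sph^2$ --- and for configurations containing them your argument returns only $\geq \pi/3$. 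The repair is an equality analysis. If the average equals $\pi/3$, your chain of inequalities forces: every triple to be degenerate (otherwise some spherical comparison triangle has positive excess), every angle to equal its comparison angle, and every $\xt_q\left(\Sigma_{p_i}\right)$ to equal the average over your chosen directions. Now pick $p_s, p_t$ realizing the largest mutual distance among the $q+1$ points. For any other $p_j$, the degenerate triple $\{p_s,p_j,p_t\}$ must have $p_j$ as its middle point (either other position forces $p_j=p_s$ or $p_j=p_t$ by maximality and distinctness), so the comparison angle at $p_s$ between $p_j$ and $p_t$ is $0$; hence all actual angles at $p_s$ vanish, all $q$ directions there coincide, the configuration average at $p_s$ is $0$, and therefore $\xt_q\left(\Sigma_{p_s}\right)=0$, i.e.\ $\Sigma_{p_s}$ is a single point. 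This is impossible when $\dim X \geq 2$, since $\dim \Sigma_{p_s} = \dim X - 1 \geq 1$. Note that this last step is where strictness really comes from, and it cannot be avoided: on a segment of length $\pi$, which has $\curv \geq 1$ under the usual conventions, the three points $0, \pi/2, \pi$ give average exactly $\pi/3$, so the strict inequality is not a formal consequence of triangle comparison alone and implicitly assumes $\dim X \geq 2$ (as holds in all of the paper's applications).
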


In light of this result, we make the following distinction between spaces of directions.

\begin{definition}[Small Spaces] Let $\Sigma$ be an Alexandrov space of $\curv \geq 1$. We say that $\Sigma$ is {\em small} if $\xt_3(\Sigma)\leq \tfrac{\pi}{3}$. Then for a general Alexandrov space $X$ and a point $p\in X$ we can say that  $p$ has a {\em small} space of directions if $\xt_3(\Sigma_p)\leq\tfrac{\pi}{3}$. 
\end{definition}

It then follows that in a positively curved Alexandrov space we can bound the total number of points with small spaces of directions. 

\begin{proposition}\label{p:eulerbound}
	Let $X$ be a positively curved Alexandrov space. Then there can be at most three distinct points with small spaces of directions. 
\end{proposition}

\begin{proof}
	Assume, aiming for a contradiction, that there are four such points. Then  there would be four points in $X$ having spaces of directions with $3$-extent bounded above by $\pi/3$. By a simple application of the Extent Lemma \ref{extent}, this is not possible. Therefore there are at most three such points.
\end{proof}

We remark that the application of the Extent Lemma in this proof essentially synthesizes the argument given in the original paper of Hsiang and Kleiner~\cite{HK} to bound the total number of isolated fixed points. 

By design, Condition \q{} ensures that the space of directions at the image in the orbit space of an isolated fixed point is small. Therefore Proposition \ref{p:eulerbound} yields the following bound on the number of fixed points.

\begin{proposition}\label{p:fixbound}
	Let $X$ be a positively curved Alexandrov space on which $T^1$ acts isometrically and effectively so as to satisfy Condition \q{}. Then there can be at most three isolated fixed points. 
\end{proposition}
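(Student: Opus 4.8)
The plan is to pass to the orbit space $\bar X = X/T^1$ and apply the bound of Proposition~\ref{p:eulerbound}. Since the class of positively curved Alexandrov spaces is closed under quotients by isometric group actions, $\bar X$ is again a positively curved Alexandrov space, here of dimension $3$. The whole argument then reduces to identifying the small space singled out by Condition \qprime{} with a space of directions of $\bar X$, so that the bound on small spaces of directions becomes a bound on fixed points.

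First I would record the local picture at an isolated fixed point $p$. Since $p$ is fixed, its isotropy group is $T^1_p = T^1$ and its orbit is the single point $p$, so distinct isolated fixed points of $X$ project to distinct points $\bar p \in \bar X$. Because the orbit is zero-dimensional there are no orbital directions at $p$, and the space of directions $\Sigma_p X$ coincides with the space of normal directions $\nu_p$; it is a $3$-dimensional Alexandrov space with $\curv \geq 1$ on which the isotropy group $T^1$ acts isometrically. As $p$ is isolated, this action has no fixed directions, and effectiveness of the action on $X$ forces effectiveness on $\Sigma_p X$, so the isotropy action is exactly the sort of action to which Condition \qprime{} applies.

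Next I would identify $\Sigma_{\bar p}\bar X$. By the Slice Theorem~\ref{l:slice} together with the join decomposition of the space of directions, a neighborhood of $\bar p$ is homeomorphic to the quotient cone $\left(K\nu_p\right)/T^1 = K\left(\Sigma_p X / T^1\right)$, whence $\Sigma_{\bar p}\bar X = \Sigma_p X / T^1$. This is precisely the space $\Sigma^3/T^1$ appearing in Condition \qprime{}. By hypothesis the action satisfies Condition \q{}, so the isotropy action at $p$ satisfies Condition \qprime{}; its first clause guarantees that $\Sigma_p X / T^1$ is small, i.e.\ $\xt_3\left(\Sigma_{\bar p}\bar X\right) \leq \pi/3$. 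Finally I would invoke Proposition~\ref{p:eulerbound}: in the positively curved Alexandrov space $\bar X$ there are at most three distinct points with small spaces of directions. Since each isolated fixed point contributes such a distinct point $\bar p$, there are at most three isolated fixed points.

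The only genuinely delicate step is the identification $\Sigma_{\bar p}\bar X = \Sigma_p X / T^1$ --- that the small space named in Condition \qprime{} really is the space of directions of the orbit space at $\bar p$. Everything else is formal bookkeeping about orbits and projections, and this identification is exactly what Condition \q{} was designed to supply, so I expect no further obstruction.
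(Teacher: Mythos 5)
Your proof is correct and takes essentially the same approach as the paper, which likewise observes that Condition \q{} was designed precisely so that the image of each isolated fixed point in the (positively curved) orbit space has a small space of directions, and then applies Proposition \ref{p:eulerbound} to $\bar X$. One small caveat: since smallness is a metric condition, the identification $\Sigma_{\bar p}\bar X \cong \Sigma_p X / T^1$ should be invoked as the standard isometric identification of spaces of directions in quotients by isometric actions (the orbit projection being a submetry), rather than deduced from the purely topological Slice Theorem \ref{l:slice}.
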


\subsection{Singular knots}\label{s:knots}

Under Condition \q{}, not only is the space of directions at the image in the orbit space of an isolated fixed point a small space, so too is any double  branched cover over two points corresponding to finite isotropy. This allows us to investigate singular knots by combining the results of the previous section with results on double branched covers over knots.

We define the following  condition.

\begin{definition}[Condition O]\label{d:condition_o} Let $X^3$ be an Alexandrov space homeomorphic to $\sph^3$ containing an extremal closed curve, $c$. We say that $(X,c)$ satisfies 
	{\em Condition O} if  there are two points $p_1, p_2\in X$,  not necessarily on $c$, which have small spaces of directions  and that the corresponding points $\tilde{p}_i, i=1, 2$ in the double branched cover of $X$ over $c$, $X_2(c)$, also have small spaces of directions. 
\end{definition} 

The following theorem synthesizes information from Section 2 of~\cite{GW2}.

\begin{theorem}\label{knot} Let $X^3$ be a positively curved Alexandrov space homeomorphic to $\sph^3$. If $c\subset X^3$ is an extremal closed curve and $(X^3, c)$ satisfies Condition O, then $c$ is unknotted.
\end{theorem}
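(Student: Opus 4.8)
The plan is to form the double branched cover $X_2(c)$ and show that the hypotheses force it to be homeomorphic to $\sph^3$; the unknottedness of $c$ then follows from the resolution of the Smith conjecture. The geometric engine is the fact, imported from~\cite[Section~2]{GW2}, that branching over an extremal curve preserves the lower curvature bound: since $c$ is extremal, the transverse cone angle of $X^3$ along $c$ is at most $\pi$, so doubling across $c$ yields cone angles at most $2\pi$, and together with $\curv \geq 1$ away from $c$ this makes $X_2(c)$ a closed Alexandrov space with $\curv \geq 1$. Because $c$ is a locally flat knot in the topological manifold $\sph^3$, branching over it produces a manifold, so by Proposition~\ref{p:GGG} we may write $X_2(c) \cong \sph^3/\Gamma$ for a finite $\Gamma \subgp \O(4)$ acting freely.

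The next step is to show $\Gamma$ is trivial, and here Condition O enters. It supplies two points $\tilde p_1, \tilde p_2 \in X_2(c)$, distinct because they lie over the distinct points $p_1, p_2 \in X$, each with a small space of directions. Were $\Gamma$ non-trivial, the universal covering $\sph^3 \to \sph^3/\Gamma = X_2(c)$ — a local isometry for the lifted Alexandrov metric, and hence preserving spaces of directions — would carry these to $2\lvert \Gamma \rvert \geq 4$ distinct points of $\sph^3$, each still with a small space of directions. Since $\sph^3$ with the lifted metric is a closed, positively curved Alexandrov space, this contradicts Proposition~\ref{p:eulerbound}. Therefore $\Gamma$ is trivial and $X_2(c) \cong \sph^3$.

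Finally, the deck transformation of the branched cover is an isometric involution $\iota$ of $X_2(c) \cong \sph^3$ whose fixed-point set is the branch locus, a locally flat circle mapping homeomorphically onto $c$. By the positive resolution of the Smith conjecture, the fixed circle of such an involution of $\sph^3$ is unknotted, and hence so is $c$. I expect the principal obstacle to be the first step, namely verifying that extremality of $c$ controls the transverse geometry tightly enough that the doubled space retains $\curv \geq 1$; this is the geometric heart of the matter (and the ingredient drawn from~\cite{GW2}), after which the counting and the appeal to the Smith conjecture are essentially formal. A secondary point requiring care is the precise accounting of points with small spaces of directions: one must track how the two points furnished by Condition O interact with the involution $\iota$ and with the covering $\sph^3 \to X_2(c)$, and it is cleanest to detect the over-population forcing $\Gamma = 1$ in the universal cover rather than in $X_2(c)$ directly.
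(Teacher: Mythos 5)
Your first three steps coincide with the paper's own proof: form the double branched cover, use the extremality of $c$ together with Section 2 of~\cite{GW2} to keep $\curv \geq 1$, feed the two small points supplied by Condition O into the universal cover, and conclude from Proposition~\ref{p:eulerbound} that $2\lvert\pi_1(X_2(c))\rvert \leq 3$, so $X_2(c)$ is simply connected. (The detour through Proposition~\ref{p:GGG} to write $X_2(c) \cong \sph^3/\Gamma$ is harmless but unnecessary; counting preimages of the two small points in the universal cover, as the paper does, needs no manifold structure.) The divergence is in the endgame, and that is where there is a genuine gap. You apply the resolution of the Smith conjecture to the deck transformation $\iota$ of $X_2(c)$. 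But the Smith conjecture, as resolved, is a theorem about \emph{smooth} (equivalently, in dimension $3$, PL) involutions; for arbitrary topological involutions of $\sph^3$ it is \emph{false} --- Bing and Montgomery--Zippin constructed periodic homeomorphisms of $\sph^3$ with wildly embedded fixed-point sets. Your $\iota$ is an isometry of a singular Alexandrov metric, hence a priori only a homeomorphism of $\sph^3$; no smooth structure in which it is smooth has been exhibited, so the citation does not apply as stated, and this step is far from ``essentially formal.''

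The gap is repairable, and the repair is essentially what the paper's citation of Theorem C of~\cite{GW2} packages. Since $c$ is extremal in a space homeomorphic to $\sph^3$, it is locally flat (Relative Stability, as recalled in Section~\ref{s:equivalex}), and in dimension $3$ locally flat curves are tame; hence the pair $(X^3, c)$ is homeomorphic to a PL pair $(\sph^3, K)$, the topological double branched cover is naturally identified with the PL one, and $\iota$ is topologically conjugate to the standard PL deck involution, which \emph{is} smoothable. Only then does the Smith machinery apply --- and note that what you actually need is its full output (the action is conjugate to an orthogonal one, so the quotient pair is standard and $c$ itself is unknotted), not merely the statement that the fixed circle upstairs is unknotted. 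Alternatively, one avoids the involution altogether: Theorem C of~\cite{GW2} states that a (tame) knot in $\sph^3$ is trivial if and only if its double branched cover is simply connected, so simple connectivity of $X_2(c)$, which you have already established, finishes the proof directly. That is the route the paper takes, precisely to sidestep the smooth-versus-topological issue your write-up glosses over.
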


\begin{proof} Consider the double branched cover over the closed curve $c$, $X^3_2(c)$.  By Condition O, we see that $X^3_2(c)$ has at least two singular points with small spaces of directions. Recall that by Lemma 5.2 in~\cite{GW2}, $X^3_2(c)$ is also positively curved. It follows that its universal cover, $\widetilde{X^3_2}(c)$, is also positively curved, with  at least $2|\pi_1(X^3_2(c))|$ singular points. By Proposition  \ref{p:eulerbound}, $2|\pi_1(X^3_2(c))| \leq 3$, 
	hence $X^3_2(c)$ is simply connected.
	
	  Theorem C of~\cite{GW2}  tells us that we can determine whether a closed curve, $c$, is knotted by considering the double branched cover, $X^3_2(c)$, over that curve. Namely, if $c$ is the unknot, $X^3_2(c)$ is simply connected, and otherwise the fundamental group has order at least $3$. In particular, this allows us to conclude that $c$ is the unknot.
\end{proof}

\subsection{Equivariant Suspension Theorem}

If the orbit space of a $G$-action is given by a suspension, with the suspension points corresponding to $G$-fixed points, then the $G$-action is also on a suspension. This allows us to give an Alexandrov geometry analogue of Grove and Searle's Equivariant Sphere Theorem~\cite[Theorem 1.4]{GS2}.

\begin{proposition}
	\label{p:suspension}
		Let $X$ be a closed, positively curved Alexandrov space on which $G$ acts by isometries. Suppose that $p_1, p_2 \in X$ are two fixed points and that $\bar{X} = X/G$ is homeomorphic to a suspension so that the homeomorphism respects the stratification by orbit types while taking $\bar{p}_1$ and $\bar{p}_2$ to the suspension points.

		Then $\Sigma_{p_1} \cong \Sigma_{p_2}\cong \Sigma$ are equivariantly homeomorphic, and $X$ is equivariantly homeomorphic to the spherical suspension of $\Sigma$.
\end{proposition}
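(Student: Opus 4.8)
The plan is to realize all of $X$ as a single cone emanating from $p_1$, obtained by running a $G$-equivariant gradient flow, and to show that the only obstruction to extending this cone globally sits over the second vertex $p_2$. Throughout, $n=\dim X$ and $\bar Y$ denotes the base of the suspension, $\bar X \cong \Susp(\bar Y)$.

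First I would record the local models. By the Slice Theorem \ref{l:slice} together with Perelman's conical neighborhood theorem, a small metric ball about each fixed point is $G$-equivariantly homeomorphic to the open cone on its space of directions, $B_r(p_i) \cong_G K\Sigma_{p_i}$, with $G$ acting as the cone on the isotropy action (here $G_{p_i}=G$ since $p_i$ is fixed, so $\nu_{p_i}=\Sigma_{p_i}$). Passing to the quotient, $\Sigma_{p_i}/G = \Sigma_{\bar p_i}\bar X$, and since a neighborhood of $\bar p_i$ is simultaneously a Perelman cone on $\Sigma_{\bar p_i}\bar X$ and, by hypothesis, the cone on the suspension base, uniqueness of links gives $\Sigma_{p_i}/G \cong \bar Y$ for $i=1,2$. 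It then remains only to upgrade this equality of quotients to an \emph{equivariant} homeomorphism of the total spaces of directions, and to assemble the global structure.

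The mechanism is the flow of $\rho = \dist(p_1,\cdot)\colon X \to [0,\diam(X)]$. Since $G$ fixes $p_1$ and acts by isometries, $\rho$ is $G$-invariant, so its Perelman gradient and the associated gradient flow $\Phi_t$ are $G$-equivariant; away from critical points, $\Phi_t$ carries level sets to level sets through equivariant homeomorphisms, and near the minimum $p_1$ it sweeps out the cone $K\Sigma_{p_1}$. The heart of the argument, and the step I expect to be the \emph{main obstacle}, is to show that $\rho$ has no critical points on $X \setminus \{p_1,p_2\}$ and that $\rho^{-1}(\diam(X))=\{p_2\}$. Because $\pi$ is a submetry, $\rho = \bar\rho\circ\pi$ with $\bar\rho=\dist(\bar p_1,\cdot)$, and a point is $\rho$-critical precisely when its image is $\bar\rho$-critical, so it suffices to argue in the orbit space, where $\curv \geq 1$. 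Here I would invoke the isotopy lemma of critical-point theory to conclude that regular level sets of $\bar\rho$ are mutually homeomorphic, use the topological suspension structure to pin these down to copies of $\bar Y$ so that sub- and superlevel sets are cones on $\bar Y$, and then combine this with a Grove--Shiohama-type uniqueness argument for the point dual to $\bar p_1$ in positive curvature to force the only critical points to be $\bar p_1$ and the single point $\bar p_2$. The delicacy is exactly that the suspension hypothesis is purely topological whereas criticality is metric, so these two inputs must be reconciled; this reconciliation is where essentially all the difficulty lies.

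Granting the crux, the assembly is routine. Perelman's theory shows the equivariant flow sweeps $X \setminus \{p_2\}$ out of a neighborhood of $p_1$, giving a $G$-equivariant homeomorphism $X \setminus \{p_2\} \cong_G K\Sigma_{p_1}$ onto the open cone. Symmetrically, the shrinking geodesic spheres about $p_2$ are equivariantly $\Sigma_{p_2}$, and matching them with the regular level sets of $\rho$ (all equivariantly homeomorphic via $\Phi_t$) identifies $\Sigma_{p_1} \cong_G \Sigma_{p_2}=:\Sigma$, which is the first assertion. Gluing the two equivariant cones $K\Sigma$ along a common regular level set then realizes $X \cong_G \Susp(\Sigma)$; after rescaling so that $\dist(p_1,p_2)=\pi$ this is the spherical suspension, with the $G$-action the join of the trivial action on $\sph^0$ and the isotropy action on $\Sigma$, completing the proof.
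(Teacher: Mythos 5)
Your strategy stands or falls with the claim you yourself flag as the crux: that $\rho=\dist(p_1,\cdot)$ has no critical points on $X\setminus\{p_1,p_2\}$ and that $\rho^{-1}(\diam X)=\{p_2\}$. This claim does not follow from the hypotheses, and it is in fact false. The suspension hypothesis of Proposition~\ref{p:suspension} is purely topological, hence insensitive to deformations of the metric and to which fixed points are designated $p_1,p_2$, whereas criticality of $\dist(p_1,\cdot)$ is a metric condition; the ``reconciliation'' you hope for cannot exist. Concretely, let $X=\sph^3(1)\subset\ccc^2$ with the isometric circle action $\lambda\cdot(z_1,z_2)=(z_1,\lambda z_2)$, whose fixed-point set is the circle $\{z_2=0\}$, and let $p_1,p_2$ be two \emph{non-antipodal} points on that circle. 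The orbit space is a disk, stratified by its boundary (fixed points) and its interior (principal orbits), and it is homeomorphic, respecting this stratification, to the suspension of an arc with $\bar{p}_1,\bar{p}_2$ as vertices; so all hypotheses hold (and the conclusion is true). Yet $\dist(p_1,\cdot)$ is critical at the antipode $-p_1\neq p_2$, where it attains its maximum, so your flow sweeps $X\setminus\{-p_1\}$ onto an open cone whose missing vertex is $-p_1$ rather than $p_2$, and the proposed identification of $\Sigma_{p_2}$ with the regular level sets of $\rho$ never materializes. The appeal to ``Grove--Shiohama-type uniqueness'' has no traction either: after rescaling to $\curv\geq 1$ there is no lower diameter bound in the hypotheses, and topological control of the regular level sets (all copies of $\bar{Y}$) never excludes critical points elsewhere: on the round sphere every level set of a distance function is a sphere, yet the antipode is critical.

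The paper's own proof is purely topological, which is what the purely topological hypothesis demands: blow up $X$ at $p_1$ and $p_2$ in the sense of Section 4 of~\cite{HS}, replacing the two points by their spaces of directions, to obtain a $G$-space $W$ whose orbit space is the blow-up of $\bar{X}$ and hence homeomorphic to $\bar{\Sigma}\times[0,1]$ with orbit types respecting the product structure; the Covering Homotopy Theorem of Palais~\cite{Pa} then lifts this product structure equivariantly to $W$, which simultaneously gives the equivariant homeomorphism $\Sigma_{p_1}\cong\Sigma_{p_2}$ and exhibits $X$ as the suspension of $\Sigma$ with the suspended action. Your metric argument is, in substance, the paper's proof of the Equivariant Suspension Theorem~\ref{t:est}: there the genuinely metric hypothesis $\diam\left(\Sigma_{\bar{p}_i}\right)\leq\frac{\pi}{4}$ forces $\angle \bar{p}_1 q \bar{p}_2>\frac{\pi}{2}$ for every $q$, which is exactly what makes the distance functions regular away from the vertices, and that theorem then concludes by citing Proposition~\ref{p:suspension}. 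So critical-point theory enters the paper one level up, under a metric hypothesis your setting lacks, and it cannot substitute for the topological lifting argument inside the proposition itself.
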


\begin{proof}
	Following the methodology of Section 4 of ~\cite{HS}, let $W$ be the ``blow up" of $X$ at the fixed points $p_1$ and $p_2$. That is,  remove the points $p_1$ and $p_2$ from $X$ and replace them with their spaces of directions to obtain $W$. Then $W$ admits a $G$-action and the orbit space $W/G$ corresponds to the blow up of $\bar{X}$ at $\bar{p}_1$ and $\bar{p}_2$. 
	
	Since $\bar{X}$ is a suspension, $W/G$ is homeomorphic to $\bar{\Sigma} \times [0,1]$, where the orbit types respect the product structure and $\bar{\Sigma} \cong \Sigma_{\bar{p}_1} \cong \Sigma_{\bar{p}_2}$. By the Covering Homotopy Theorem of Palais~\cite{Pa} (cf.~Theorem II.7.1 in~\cite{Br}), $W$ is also a product and $G$ acts on it with a product action. 
	
	It follows that $\Sigma_{p_1}$ and $\Sigma_{p_2}$ are equivariantly homeomorphic, and so we identify them as $\Sigma$. Then $X$ is a suspension on $\Sigma$, and the group action on $X$ is the suspension of the isotropy action on $\Sigma$.
\end{proof}

\begin{theorem}[Equivariant Suspension Theorem]\label{t:est}
	Let $X$ be a closed, positively curved Alexandrov space on which $G$ acts by isometries. Suppose that $p_1, p_2 \in X$ are two fixed points and that $\diam \left( \Sigma_{\bar{p}_i} \right) \leq \frac{\pi}{4}$ for $i=1,2$. 
	Then $\Sigma_{p_1} \cong \Sigma_{p_2}\cong \Sigma$ are equivariantly homeomorphic, and $X$ is equivariantly homeomorphic to the spherical suspension of $\Sigma$.
\end{theorem}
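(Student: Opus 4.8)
The plan is to reduce the statement to Proposition~\ref{p:suspension} by proving that the orbit space $\bar{X} = X/G$ is a stratified topological suspension with $\bar{p}_1$ and $\bar{p}_2$ as its two vertices. Since the quotient of a positively curved space by an isometric action is again positively curved, and since angles and spaces of directions are scale-invariant, I would first rescale so that $\bar{X}$ has $\curv \geq 1$. The hypothesis passes to the quotient, so that $\diam(\Sigma_{\bar{p}_i} \bar{X}) \leq \pi/4$ for the space of directions of $\bar{X}$ at each $\bar{p}_i$.

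The geometric heart is an angle estimate via Toponogov comparison. Fix $x \in \bar{X} \setminus \{\bar{p}_1, \bar{p}_2\}$ and form the comparison triangle of $(\bar{p}_1, \bar{p}_2, x)$ on the unit sphere. The actual angles at $\bar{p}_1$ and at $\bar{p}_2$ are distances in $\Sigma_{\bar{p}_1}$, respectively $\Sigma_{\bar{p}_2}$, hence at most $\pi/4$; since actual angles dominate comparison angles when $\curv \geq 1$, the comparison angles $\alpha, \beta$ at $\bar{p}_1, \bar{p}_2$ are also $\leq \pi/4$. The spherical law of cosines then gives $\cos\gamma = -\cos\alpha\cos\beta + \sin\alpha\sin\beta\cos c$ for the comparison angle $\gamma$ at $x$, where $c = \dist(\bar{p}_1, \bar{p}_2) > 0$; with $\alpha, \beta \leq \pi/4$ and $\cos c < 1$ this forces $\cos\gamma < 0$, i.e. $\gamma > \pi/2$. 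Consequently the actual angle at $x$ between \emph{any} minimizing geodesic to $\bar{p}_1$ and a fixed minimizing geodesic to $\bar{p}_2$ exceeds $\pi/2$.

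This estimate shows that the direction to $\bar{p}_2$ witnesses the regularity of $f_1 := \dist(\bar{p}_1, \cdot)$ at every $x \neq \bar{p}_1, \bar{p}_2$: it makes an angle $> \pi/2$ with all directions toward $\bar{p}_1$, so $x$ is not critical for $f_1$. Meanwhile $\bar{p}_1$ is the minimum, and $\bar{p}_2$ is critical (indeed the maximum), because $\diam(\Sigma_{\bar{p}_2}) \leq \pi/4 < \pi/2$ makes the directions toward $\bar{p}_1$ automatically $\pi/2$-dense in $\Sigma_{\bar{p}_2}$. Thus $f_1$ has exactly two critical points. By Perelman's critical point theory for distance functions on Alexandrov spaces (the Isotopy Lemma), the regular level sets are mutually homeomorphic and $\bar{X}$ is homeomorphic to their suspension; as a level set degenerates to $\bar{p}_i$ it is homeomorphic to $\Sigma_{\bar{p}_i}$, since small metric spheres are homeomorphic to the space of directions. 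Hence $\Sigma_{\bar{p}_1} \cong \Sigma_{\bar{p}_2} =: \bar{\Sigma}$ and $\bar{X} \cong \Susp(\bar{\Sigma})$, with $\bar{p}_1, \bar{p}_2$ as vertices. Because the gradient flow of a distance function preserves extremal subsets, and the orbit-type strata are extremal, this homeomorphism respects the stratification by orbit types. Proposition~\ref{p:suspension} then applies verbatim and yields the desired equivariant conclusion.

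I expect the main obstacle to be this last, topological step: carefully invoking the Alexandrov critical point and stability theory to upgrade ``exactly two critical points'' into a genuine \emph{stratified} suspension homeomorphism sending the poles to $\bar{p}_1, \bar{p}_2$. In particular one must ensure that the gradient flow simultaneously respects every orbit-type stratum, not merely the top one, so that the hypotheses of Proposition~\ref{p:suspension} are literally met, and one must handle the possible non-uniqueness of minimizing geodesics to $\bar{p}_1$ when verifying regularity, for which the comparison bound $\gamma > \pi/2$ is uniform over all such geodesics.
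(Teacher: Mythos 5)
Your proposal is correct and follows essentially the same route as the paper: the comparison-angle estimate $\angle \bar{p}_1 q \bar{p}_2 > \pi/2$ at every point of $\bar{X} \setminus \{\bar{p}_1,\bar{p}_2\}$, regularity of $\dist(\bar{p}_i,\cdot)$ giving the suspension structure on $\bar{X}$ via Perelman's fibration/isotopy theory, respect for the orbit-type stratification via extremal subsets (the paper invokes the method of the Relative Stability Theorem of~\cite{HS} here), and finally Proposition~\ref{p:suspension}. Your spherical law of cosines computation simply makes explicit the angle bound that the paper asserts, and your flagged ``main obstacle'' is exactly the step the paper handles by citation.
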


\begin{proof}
	Denote by $\bar{X}$ the orbit space $X/G$. Let $q \in \bar{X} \setminus \left\lbrace \bar{p}_1, \bar{p}_2 \right\rbrace$ be chosen arbitrarily. Then $\angle \bar{p}_1 q \bar{p}_2 > \frac{\pi}{2}$. Then, just as in Theorem 4.5 in~\cite{P}, the function $\dist(p_i, \cdot)$ is regular, so that $\bar{X} \setminus \left\lbrace \bar{p}_1, \bar{p}_2 \right\rbrace$ fibers over an interval, and $\bar{X}$ is homeomorphic to the suspension of $\bar{\Sigma}$, where $\Sigma \cong \Sigma_{\bar{p}_1} \cong \Sigma_{\bar{p}_2}$. Following the method of proof of the Relative Stability Theorem 4.3 of the authors' previous work~\cite{HS}, this homeomorphism respects the stratification by extremal subsets. 
	
	The result then follows from Proposition \ref{p:suspension}.
\end{proof}

In Section \ref{s:twofixedpoints}, we  apply this result in the context of $T^1$ actions on $4$-dimensional spaces satisfying Condition \q{} in order to understand the isotopy type of extremal $\uptheta$-graphs, that is, graphs with two vertices, three edges and no loops, with the shape of the letter $\uptheta$. Where the isotropy action at a fixed point $p$ has three components of finite isotropy, Condition \qprime{} at that point implies that $\diam \left( \Sigma_{\bar{p}} \right) \leq \frac{\pi}{4}$, and so the result is applicable.

Alternatively, arguments similar to those used in Section \ref{s:knots} for extremal knots together with Lemma 2.3 of Calcut and Metcalf-Burton~\cite{CMB} could be applied to show that the $\uptheta$-graphs are unknotted.

\section{Topological classification for three isolated fixed points}\label{s:top3points}

In this section we  classify actions of $T^1$ on closed oriented Alexandrov spaces, $X^4$, such that the orbit space $\bar{X}^3 \cong \sph^3$ and the fixed-point set comprises three isolated points, $\left\lbrace p_1, p_2, p_3 \right\rbrace$, with the entire singular set lying on a locally flat closed curve $\bar{c}$, which is unknotted. The classification is up to orientation-preserving equivariant homeomorphism.

The classification we obtain is based on Fintushel's classification of simply connected $4$-manifolds with circle actions~\cite{F1}. We generalize his work, in the sense that we consider Alexandrov spaces rather than manifolds, but we also specialize it by constraining the structure of the orbit space.

In Section \ref{s:invariants} we assign invariants, $$\left( \frac{\beta_1}{\alpha_1}, \frac{\beta_2}{\alpha_2}, \frac{\beta_3}{\alpha_3}\right) \in \qqq^3,$$  to the actions in a manner similar to that of~\cite{F1}. There are various degrees of freedom in how these invariants are defined, and in Section \ref{s:equivalence} we define an equivalence relation on $\qqq^3$ which removes these freedoms. The invariants are then uniquely defined up to equivalence. In Lemma \ref{inv}, we  also show that certain invariants do not correspond to actions: those where  
$\frac{\beta_i}{\alpha_i}= \frac{\beta_j}{\alpha_j}$ for some $i\neq j$.  In Section \ref{s:uniqueness}, we prove a uniqueness result, showing that if two $4$-spaces have equivalent invariants there is an orientation-preserving equivariant homeomorphism between them, so that these invariants are sufficient for classification.

In Section \ref{s:cp2}, we prove an existence result which is  a key ingredient in proving Theorem \ref{t:amsr}, our main result. Theorem \ref{t:wcp} shows that every set of invariants, barring those where two of the triple are equal, corresponds to an action on a finite quotient of some $\wcp$.

\begin{remark}\label{generalequivalence}
All of the results in Sections 4.1, 4.2 and 4.3  can easily be shown to hold for any number $n$ of isolated points lying on an unknotted curve with the appropriate modifications. Note that those $n$-tuples for which  
$\frac{\beta_i}{\alpha_i}= \frac{\beta_{i+1}}{\alpha_{i+1}}$, where the indices are taken modulo $n$, do not correspond to an action.  
\end{remark}

\subsection{Assigning invariants to actions}\label{s:invariants}

In this subsection we assign invariants to actions on oriented $4$-spaces. Fintushel refers to these invariants as weights in~\cite{F1}, but to prevent ambiguity later when making use of weighted projective spaces, we  avoid this terminology.

Orient $\bar{X}$ so that its orientation, followed by the orientation of the orbits, gives the orientation of $X^4$.

Label the arc of $\bar{c}$ between the pair of fixed points $(\bar{p}_{i}, \bar{p}_{i+1})$ in $X^4 / T^1 = \bar{X}^3 \cong \sph^3$ by $J_i$, as shown in Figure \ref{f:orbitspace}, where the arithmetic here is modulo $3$. Orient $\bar{c}$ so that it is traversed as $(\bar{p}_1 \bar{p}_2 \bar{p}_3)$. Note that a renumbering of the $\bar{p}_i$ permits the opposite orientation of the curve.

\begin{figure}
	\begin{tikzpicture}
	\node[anchor=south west,inner sep=0] (image) at (0,0) {\includegraphics[height=6cm]{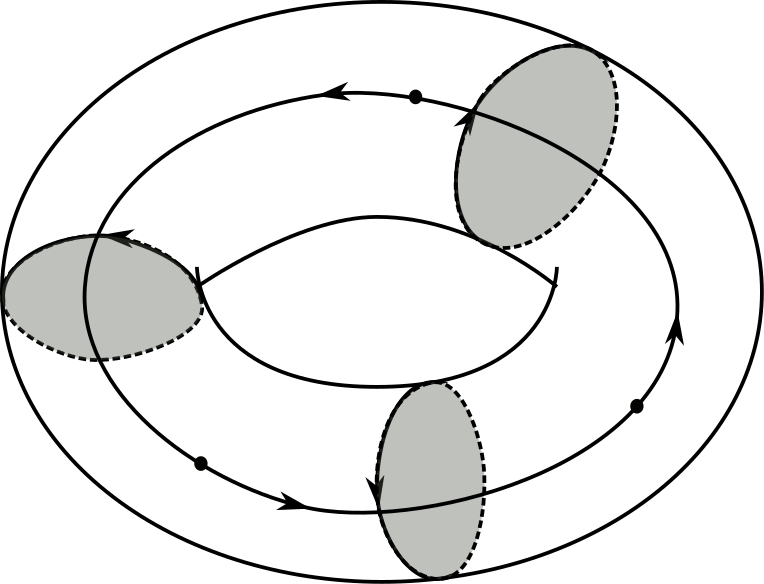}};
	\begin{scope}[
	x={(image.south east)},
	y={(image.north west)}
	]
	\node  at (0.57,0.25) {$\bar{V}_1$};
	\node  at (0.68,0.67) {$\bar{V}_2$};
	\node  at (0.07,0.5) {$\bar{V}_3$};
	\node  at (0.38,0.08) {${J}_1$};
	\node  at (0.92,0.43) {${J}_2$};
	\node  at (0.43,0.9) {${J}_3$};
	\node  at (0.3,0.23) {$\bar{p}_1$};
	\node  at (0.81,0.35) {$\bar{p}_2$};
	\node  at (0.57,0.88) {$\bar{p}_3$};
	\node  at (0.15,0.225) {$\bar{B}_1$};
	\node  at (0.93,0.63) {$\bar{B}_2$};
	\node  at (0.22,0.85) {$\bar{B}_3$};
	\end{scope}
	\end{tikzpicture}
	\caption{Structure of the neighborhood $U$ of the curve $\bar{c} \subset \bar{X}^3\cong S^3$.}
	\label{f:orbitspace}
\end{figure}

Let $\bar{U} \cong \sph^1 \times \disk^2$ be a closed neighborhood of $\bar{c}$. We decompose $\bar{U}$ into three balls $\bar{B}_i$, each containing one fixed point $\bar{p}_i$, so that $\bar{B}_i \cap \bar{B}_{i+1} = \bar{V}_i$, a locally flat $2$-disk. Then $\partial B_i$, a submanifold of $X^4$, is a Seifert manifold, which we  denote by $Z^3_i$, and we orient it by its inward normal.

Orient each disk $\bar{V}_i$ so that the intersection number $\I{\bar{V}_i}{J_i} = +1$. This is equivalent to orienting it as a submanifold of $\bar{Z}_{i+1}$ (see Figure \ref{f:seifert}). We then orient the boundary $\partial \bar{V}_i$ by the inward normal.

\begin{figure}
	\begin{tikzpicture}
	\node[anchor=south west,inner sep=0] (image) at (0,0) {\includegraphics[height=3cm]{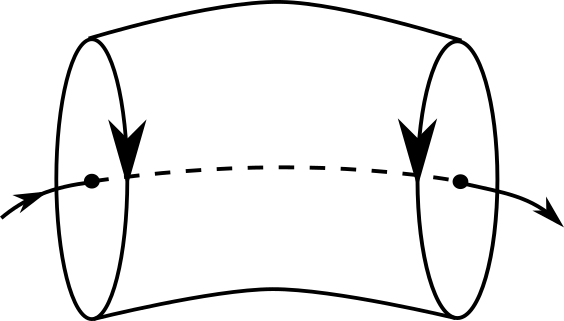}};
	\begin{scope}[
	x={(image.south east)},
	y={(image.north west)}
	]
	\node  at (0.5,0.9) {$\bar{Z}_1$};
	\node  at (0.81,0.2) {$\bar{V}_1$};
	\node  at (0.16,0.2) {$\bar{V}_3$};
	\node  at (0.5,0.56) {${J}_1$};
	\end{scope}
	\end{tikzpicture}
	\caption{The base of the Seifert manifold $Z_1 = \partial B_1$. Note that $\bar{V}_3$ is oriented as a submanifold of $\bar{Z}_1$, since $J_1$ corresponds to the inward normal here. However, $\bar{V}_1$ has the opposite orientation, and is instead oriented as a submanifold of $\bar{Z}_2$.}
	\label{f:seifert}
\end{figure}

We decompose $\bar{X}^3$ as $\bar{U} \cup \bar{W}$, where $\bar{W} = \cl \left(\bar{X} \setminus \bar{U} \right)$. Since $\bar{c}$ is unknotted, $\bar{W}$ is homeomorphic to $\sph^1 \times \disk^2$ and, in particular, $H^2(\bar{W};\zzz)=0$. Indeed, a simple application of the Mayer--Vietoris sequence shows that $H^2$ is trivial even when $\bar{c}$ is knotted. The free circle action on $W$ therefore corresponds to a trivial $T^1$-principal bundle, which admits a section, $\sigma \colon \bar{W} \to W$. 

Let the image of the restricted section $\left. \sigma \right|_{\partial \bar{V}_i}$, oriented by $\partial \bar{V}_i$, be denoted by $q_i$. We may use $q_i$ to define the pair of Seifert invariants $(\alpha_i, \beta_i)$ for the exceptional fiber in $V_i \to \bar{V}_i$, following the conventions given in Section \ref{s:seifert}. If $J_i$ does not correspond to a component of finite isotropy, then there is no exceptional fiber and the pair of invariants is of the form $(1, \beta_i)$.

\begin{definition}
	The invariants of the $T^1$ action on the oriented space $X^4$, with respect to the section $\sigma$ and the labeling of the curve $\bar{c}$, are $$\left( \frac{\beta_1}{\alpha_1}, \frac{\beta_2}{\alpha_2}, \frac{\beta_3}{\alpha_3}\right) \in \qqq^3.$$
\end{definition}

We  always assume that fractions are written in their lowest terms, with a positive denominator. Note that reversing the orientation of $X^4$  changes the sign of the invariants.

These invariants differ from Fintushel's weighted orbit spaces~\cite{F1} in three key ways, which we review here.

\begin{enumerate}[label=\Roman*., wide, labelwidth=!, labelindent=0pt, itemsep=1ex]
	\item \textbf{Normalization of invariants.} 
	Fintushel requires that the invariants for the exceptional fiber in $V_i \to \bar{V}_i$ be normalized so that $0 < \beta_i < \alpha_i$. These normalized invariants then correspond to a particular choice of section $\partial \bar{V}_i \to \partial {V}_i$. Theorem 3.6 of~\cite{F1} shows that these may be extended to a section $\partial \bar{U} \to \partial U$.
	
	However, the existence of an extension relies in a key way on the spaces of directions, $\Sigma _{p_i}$, being spheres, which does not hold in Alexandrov geometry. In our case, the normalized invariants need not extend to a section over $\partial \bar{U}$. For this reason, we begin with the global section, and so use unnormalized invariants.
	
	\item \textbf{Virtual edges.} If two of the three fixed points are not joined by a component of finite isotropy, the singular set in the orbit space does not contain a closed curve. Fintushel analyzes the resulting arcs and isolated fixed points separately. However, given the limited number of orbit space types being considered here, it is simplest to unify the treatment. 
	
	The singular set does lie on a closed curve, $\bar{c}$, though some arcs $J_i$ on the curve correspond to principal orbits. These arcs are ``virtual edges'' in the orbit space. They are treated as though they correspond to orbits with finite isotropy of order $1$. This results in the inclusion of invariants of the form $(1,\beta_i)$ in the set of invariants for $\Sigma_{p_i}$. 
	
	\item \textbf{Weighted orbit spaces.} Finally, Fintushel attaches his invariants to the orbit space to create a weighted orbit space. The isomorphism type of this weighted orbit space is a key concept in his arguments, and is a convenient way of keeping track of the different links and arcs in the space. In our case, since an orbit space always comprises a single unknotted curve in the sphere, we suppress any explicit mention of isomorphisms of weighted orbit spaces. The concept can be reduced to the equivalence of invariants, and this is done in the next section.
\end{enumerate}

\subsection{Equivalence of invariants}\label{s:equivalence}

Observe that the section $\sigma \colon \bar{W} \to W$ is not unique, and hence the invariants of the $T^1$ action on $X^4$ are not uniquely defined. Such a section is a map $\sph^1 \times \disk^2 \to \T^2 \times \disk^2$. Consider a longitudinal circle, $\sph^1 \times \left\lbrace x \right\rbrace$ and lift it to $\T^2 \times \disk^2$. The homology class of the lifted circle clearly does not depend on $x$, and it can be varied by adding $kh$, where $k \in \zzz$ and $h \in H_1(W; \zzz)$ is an oriented orbit. Varying the section in this way changes each of the $\beta_i$ to $\beta_i + k \alpha_i$, adding $k$ to each fraction.

Now we consider the impact of this change on the invariants for the Seifert manifold $Z_i \to \bar{Z}_i$, oriented by its inward normal. There are two exceptional fibers, and they have neighborhoods $\bar{V}_{i-1}$ and $-\bar{V}_i$, by which we mean $\bar{V}_i$ with the reverse orientation (see Figure \ref{f:seifert}). 
With respect to the original section $\sigma$, the invariants are therefore $\left\{0;  (\alpha_{i-1}, \beta_{i-1}), (\alpha_i, -\beta_i)  \right\}$. Varying $\sigma$, we see that $\beta_{i-1}$ is increased by $k \alpha_{i-1}$ while $- \beta_i$ is decreased by $k \alpha_{i}$, so that the generalized Euler number, given by the sum of the fractions as in Equation (\ref{e:eulernumber}), is unchanged as required.

Moreover, the labeling of the fixed points on $\bar{c}$ is also arbitrary. The starting point of the closed curve, as well as its orientation, may be freely changed. These changes also alter the invariants.

The effects of these choices on the invariants can be expressed using the following equivalence relation on $\qqq^3$.

\begin{definition}
	{Two points in $\qqq^3$ are related by the equivalence relation $\sim$ if they are the same up to
		\begin{enumerate}
			\item Rotation of co-ordinates: $(a,b,c) \sim (b,c,a)$;
			\item Reversal of co-ordinates with a change of sign:\\ $(a,b,c) \sim (-c,-b,-a)$;
			\item Translation by an integer multiple of $(1,1,1)$:\\ $(a,b,c) \sim (a+k, b+k, c+k)$ for some $k \in \zzz$;
		\end{enumerate}
		or any combination of (1), (2) and (3).  }
\end{definition} 

The following lemma is then clear from the preceding discussion.

\begin{lemma}\label{l:equivalence}
	Let $T^1$ act on $X^4$, a closed oriented Alexandrov space, so that $\bar{X}^3 \cong \sph^3$ and the fixed-point set comprises three isolated points with the entire singular set lying on a closed curve which is unknotted. If the action has invariants $\left( a,b,c \right) \in \qqq^3$ with respect to some section $\sigma$ and some labeling of the curve, then for any $\left( a',b',c' \right) \sim \left( a,b,c \right) \in \qqq^3$ there is a section $\sigma'$ and labeling of the curve with respect to which the invariants of the action are $\left( a',b',c' \right)$.
\end{lemma}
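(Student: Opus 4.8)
The plan is to show that each of the three generators of the equivalence relation $\sim$ is realized by a legitimate change in the data defining the invariants — namely a relabeling of the fixed points on $\bar c$, a change of section $\sigma$, or a combination thereof — and that these generate all of $\sim$. Since the preceding discussion (the paragraphs immediately before the definition of $\sim$) already computes the effect of each such change on the triple $\left(\frac{\beta_1}{\alpha_1},\frac{\beta_2}{\alpha_2},\frac{\beta_3}{\alpha_3}\right)$, the lemma is essentially a matter of matching those computations to relations (1), (2), and (3), then checking that arbitrary combinations of the geometric moves realize arbitrary combinations of the algebraic relations.

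First I would handle the section-change relation (3). A new section $\sigma'\colon \bar W \to W$ differs from $\sigma$ by adding $kh$ to the lifted longitudinal class, where $h \in H_1(W;\zzz)$ is an oriented principal orbit and $k \in \zzz$; as computed above, this replaces each $\beta_i$ by $\beta_i + k\alpha_i$, hence each fraction $\frac{\beta_i}{\alpha_i}$ by $\frac{\beta_i}{\alpha_i}+k$. This is exactly translation by $k(1,1,1)$, giving relation (3). Next I would treat relabeling. The labeling of the three fixed points on the oriented curve $\bar c$ amounts to a cyclic ordering together with a choice of starting point and orientation. Choosing a different starting point while keeping the orientation $(\bar p_1\bar p_2\bar p_3)$ cyclically permutes the arcs $J_i$ and hence the pairs $(\alpha_i,\beta_i)$, which is the cyclic rotation $(a,b,c)\sim(b,c,a)$ of relation (1). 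Reversing the orientation of the curve reverses the cyclic order of the arcs and, because the invariants were defined relative to the chosen orientation of $\bar c$ (with the $q_i$ oriented by the inward normal and the slices oriented via intersection with the orbit), also flips the sign of each $\beta_i/\alpha_i$; tracking the orientation conventions of Section~\ref{s:seifert} gives precisely $(a,b,c)\sim(-c,-b,-a)$, relation (2). Here the careful bookkeeping is in confirming that reversing the traversal of $\bar c$ reverses the roles of the meridian orientations so that the sign change lands on $\beta_i/\alpha_i$ with the indices also reversed.

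Finally I would note that since $\sim$ is defined to be the equivalence relation generated by (1), (2), (3) and their combinations, and since each generator corresponds to an independent geometric move (relabeling for (1) and (2), section change for (3)) that can be performed in any sequence, any $(a',b',c')\sim(a,b,c)$ is obtained from $(a,b,c)$ by some finite composition of these moves. Composing the corresponding changes of labeling and section produces a section $\sigma'$ and a labeling with respect to which the invariants are exactly $(a',b',c')$, which is the assertion of the lemma.

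I expect the main obstacle to be the orientation bookkeeping in relation (2): one must verify that reversing the orientation of $\bar c$ truly sends $(a,b,c)$ to $(-c,-b,-a)$ rather than to some other signed permutation. This requires unwinding the sign conventions fixed in Section~\ref{s:seifert} — the orientation of $\bar X$ relative to the orbits, the orientation of $S_i$ via $\I{S_i}{T^1(x_i)}=+1$, the orientation of $m_i$ and $q_i$ by inward and outward normals, and the definition $\beta_i=\I{m_i}{q_i}$ — and checking that a global orientation reversal of the curve flips each $\beta_i$ while reversing the index order, so the sign attaches correctly to each reversed coordinate. The remaining verifications for (1) and (3) are direct consequences of the computations already displayed before the definition, so the lemma is, as the text says, clear once this sign check is made explicit.
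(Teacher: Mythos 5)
Your proposal is correct and follows essentially the same route as the paper, whose proof is literally that the lemma "is clear from the preceding discussion": the section change $\sigma \mapsto \sigma + kh$ realizes relation (3), a change of starting point realizes relation (1), and a reversal of the curve's orientation realizes relation (2), with arbitrary combinations generating $\sim$. Your flagged sign check for relation (2) does work out as claimed — reversing the curve reverses each disk $\bar{V}_i$'s orientation, which amounts to viewing each solid torus in the adjacent Seifert manifold $Z_i$ rather than $Z_{i+1}$, flipping each $\beta_i$ to $-\beta_i$ while reversing the index order — and the paper leaves this bookkeeping just as implicit as you do.
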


Comparing once more to~\cite{F1}, we note that Relations (1) and (2), which are associated to the labeling of the curve, are referred to in~\cite[Section 3.3]{F1}, but Relation (3), which is associated to the section, is not used, because of the decision to use normalized invariants.

From now on, we  simply refer to the triple $\left( \frac{\beta_1}{\alpha_1}, \frac{\beta_2}{\alpha_2}, \frac{\beta_3}{\alpha_3}\right) \in \qqq^3$ as being the invariants of the action, and only make reference to the section and labeling of the curve where it is necessary.

It is not the case that any triple of fractions can correspond to an action. The next lemma gives a constraint on the values the fractions can take, and we  see later in Theorem \ref{t:wcp} that it is, in fact, the only constraint. This result may be compared to~\cite[Lemma 3.5]{F1}, in the more rigid manifold situation.

\begin{lemma}\label{inv}
	Let $T^1$ act on $X^4$, a $4$-dimensional, closed, oriented Alexandrov space, so that $\bar{X}^3 \cong \sph^3$ and the fixed-point set comprises three isolated points with the entire singular set lying on a closed curve which is unknotted. Let the action have invariants $\left( \frac{\beta_1}{\alpha_1}, \frac{\beta_2}{\alpha_2}, \frac{\beta_3}{\alpha_3}\right) \in \qqq^3$. Then the fractions $\frac{\beta_i}{\alpha_i}$, for $i=1,2,3$, are pairwise unequal.
\end{lemma}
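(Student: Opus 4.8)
The plan is to prove the contrapositive: if two of the fractions are equal, say $\frac{\beta_i}{\alpha_i} = \frac{\beta_j}{\alpha_j}$, then the curve $\bar{c}$ cannot actually be the singular set of an effective action as described, or else the action fails to be well-defined. Let me think carefully about what equality of fractions means geometrically.

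We have three arcs $J_1, J_2, J_3$ on $\bar{c}$, each with invariant $\frac{\beta_i}{\alpha_i}$. The index $i$ labels the arc between $\bar{p}_i$ and $\bar{p}_{i+1}$, and $\alpha_i$ is the order of isotropy along $J_i$.

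Let me reconsider. The fractions are associated to arcs (the edges of the singular curve between consecutive fixed points), and $\alpha_i$ is the isotropy order along arc $J_i$. But wait — re-reading, each $\bar{V}_i$ is a disk where $\bar{B}_i \cap \bar{B}_{i+1} = \bar{V}_i$, and the Seifert invariant $(\alpha_i, \beta_i)$ is for "the exceptional fiber in $V_i \to \bar{V}_i$." So the invariant is attached to the arc $J_i$ (the isotropy along that arc).
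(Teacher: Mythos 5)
Your proposal contains a genuine gap: it never arrives at an argument. You set up the contrapositive, then spend the rest of the text clarifying the bookkeeping (that the invariant $(\alpha_i,\beta_i)$ is attached to the arc $J_i$), and stop there. No contradiction is ever derived from the assumed equality $\frac{\beta_i}{\alpha_i} = \frac{\beta_j}{\alpha_j}$, so as written this is a restatement of the setup, not a proof. Moreover, the framing ``the action fails to be well-defined'' points in the wrong direction: nothing about well-definedness goes wrong; the obstruction is topological, located at the fixed point where the two arcs meet.

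The missing idea, which is the entirety of the paper's proof, is to look at the link of a fixed point. The boundary $Z_i = \partial B_i$ of the ball around $p_i$ is homeomorphic to the space of directions $\Sigma_{p_i}$, and the restricted circle action makes it a Seifert manifold with unnormalized invariants $\left\{0; (\alpha_{i-1}, \beta_{i-1}), (\alpha_i, -\beta_i)\right\}$ --- the sign flip on the second pair comes from the fact that $\bar{V}_i$ is oriented as a submanifold of $\bar{Z}_{i+1}$, hence carries the opposite orientation inside $\bar{Z}_i$. By the Orlik--Raymond classification of circle actions on $3$-manifolds, $Z_i$ is a lens space unless the generalized Euler number $-\left(\frac{\beta_{i-1}}{\alpha_{i-1}} - \frac{\beta_i}{\alpha_i}\right)$ vanishes, in which case $Z_i \cong \sph^2 \times \sph^1$. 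But $\Sigma_{p_i}$ is a space of directions of an Alexandrov space, so it has $\curv \geq 1$, and $\sph^2 \times \sph^1$, having infinite fundamental group, admits no such metric by Bonnet--Myers. Hence $\frac{\beta_{i-1}}{\alpha_{i-1}} \neq \frac{\beta_i}{\alpha_i}$ for every $i$, and since with three fixed points every pair of arcs is consecutive (each pair meets at one of the $\bar{p}_i$), all three fractions are pairwise unequal. Note also that positive curvature of $X$ itself is never needed for this step; what is used is the curvature bound automatically satisfied by spaces of directions.
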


\begin{proof}

	Recall that the Seifert invariants of the $T^1$ action restricted to the manifold $Z_i$ are $\left\lbrace 0; (\alpha_{i-1}, \beta_{i-1}), (\alpha_i, -\beta_i)\right\rbrace$. It therefore follows from the Orlik--Raymond classification of circle actions on $3$-manifolds that $Z_i$ is a lens space~\cite[Theorem 4]{OR}, unless $\frac{\beta_{i-1}}{\alpha_{i-1}} = \frac{\beta_{i}}{\alpha_{i}}$, in which case $Z_i \cong \sph^2 \times \sph^1$. However, $Z_i$ is homeomorphic to the space of directions at $p_i$, and so must admit positive curvature, and this yields a contradiction. It follows that all three fractions $\frac{\beta_i}{\alpha_i}$ are pairwise unequal.
\end{proof}

\subsection{Classification by invariants}\label{s:uniqueness}

In this subsection, we show that if two spaces admit circle actions with equivalent invariants then there is an orientation-preserving equivariant homeomorphism between them, so that these invariants suffice for classification. We then show, in the following subsection, that for any possible set of invariants, there is a finite quotient of a weighted complex projective space with a circle action with those invariants.

Before proceeding, we need a basic lemma on equivariant isotopies of the torus.

\begin{lemma}\label{l:isotopy}
	Let $T^1$ act freely on the torus $\T^2$. Let $\sigma, \sigma' \colon \sph^1 \to \T^2$ be two homologous sections of the action, and let $f \colon \T^2 \to \T^2$ be an equivariant homeomorphism which satisfies $f \circ \sigma = \sigma'$. Then there is an orbit-preserving equivariant isotopy from $\id_{\T^2}$ to $f$, that is, an isotopy through equivariant homeomorphisms all of which induce 
	$\id_{\sph^1}$.
\end{lemma}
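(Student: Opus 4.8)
The plan is to parametrize the torus $\T^2$ concretely so that the free $T^1$ action becomes standard, and then realize $g$ as the time-one map of an explicit equivariant isotopy. Since $T^1$ acts freely on $\T^2$, the orbit space $\T^2/T^1 \cong \sph^1$, and choosing any section identifies $\T^2 \cong \sph^1 \times \sph^1$ with the action being rotation in the second factor: write points as $(\theta, s)$ with $\theta$ the orbit coordinate on the base $\sph^1$ and $s \in \sph^1 = \rrr/\zzz$ the fiber coordinate, so that $t \cdot (\theta, s) = (\theta, s + t)$. A section $\sigma$ is then the graph of a function, i.e. $\sigma(\theta) = (\theta, f(\theta))$ for some map $f \colon \sph^1 \to \sph^1$, and its homology class in $H_1(\T^2;\zzz)$ records the degree of $f$ together with the base class.

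\textbf{Reduction using homology.} First I would observe that $g$, being equivariant and inducing $\id_{\sph^1}$ on the base (which it must, since it carries one section to another and sections meet each orbit once), has the form $g(\theta, s) = (\theta, s + \varphi(\theta))$ for some $\varphi \colon \sph^1 \to \sph^1$; equivariance forces the fiber displacement to be independent of $s$. The condition $g \circ \sigma = \sigma'$ then reads $f(\theta) + \varphi(\theta) = f'(\theta)$, where $\sigma' = \mathrm{graph}(f')$, so $\varphi = f' - f$. The hypothesis that $\sigma$ and $\sigma'$ are \emph{homologous} is exactly what guarantees that $f$ and $f'$ have the same degree, hence $\varphi = f' - f$ has degree $0$ and therefore \emph{lifts} to a genuine real-valued function $\tilde\varphi \colon \sph^1 \to \rrr$ (rather than only to a map into $\sph^1$). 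This integer-degree obstruction is the crux of the argument, and I expect it to be the main obstacle: without the homologous hypothesis, $\varphi$ would wind around the fiber a nonzero number of times, and no isotopy through maps inducing $\id_{\sph^1}$ could undo that winding, since the winding number is an isotopy invariant.

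\textbf{Constructing the isotopy.} Given the lift $\tilde\varphi$, I would define the straight-line homotopy in the fiber direction: $H_u(\theta, s) = (\theta, s + u\,\tilde\varphi(\theta))$ for $u \in [0,1]$, where $u\tilde\varphi(\theta) \in \rrr$ is reduced mod $\zzz$ to land in the fiber $\sph^1$. Then $H_0 = \id_{\T^2}$ and $H_1 = g$. Each $H_u$ is a homeomorphism of $\T^2$ (with continuous inverse $(\theta,s) \mapsto (\theta, s - u\tilde\varphi(\theta))$), it is equivariant because the displacement $u\tilde\varphi(\theta)$ does not depend on the fiber coordinate $s$ and so commutes with $t\cdot(\theta,s) = (\theta, s+t)$, and it induces $\id_{\sph^1}$ on the base for every $u$ by construction. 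The map $(u, \theta, s) \mapsto H_u(\theta,s)$ is jointly continuous, giving the required isotopy.

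\textbf{Remarks on rigor.} The only genuinely substantive point is the lifting step, where I would be explicit that $\varphi = f' - f \colon \sph^1 \to \sph^1$ has degree zero precisely because $[\sigma] = [\sigma']$ in $H_1(\T^2;\zzz)$, and that a degree-zero self-map of $\sph^1$ lifts to a map $\sph^1 \to \rrr$ under the covering $\rrr \to \sph^1$. Everything else is the routine verification that $H_u$ is an orbit-preserving equivariant self-homeomorphism for each $u$; I would state these checks but not belabor the elementary continuity and bijectivity estimates. It is worth noting that this lemma is the torus analogue of the standard fact that homologous sections of a trivial principal $\sph^1$-bundle over $\sph^1$ differ by a fiber-homotopy, and the proof is essentially the one-dimensional case of that statement.
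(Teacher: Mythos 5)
Your proof is correct and takes essentially the same approach as the paper's: both reduce $g$ to a fiberwise translation and build the straight-line isotopy in the fiber direction, with the homologous hypothesis supplying the crucial real-valued lift of the displacement (you lift the degree-zero map $\varphi \colon \sph^1 \to \sph^1$ through the covering $\rrr \to \sph^1$, while the paper lifts the sections themselves to the universal cover $\rrr^2$ and uses the periodicity of their difference). The two arguments are the same up to this cosmetic difference.
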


\begin{proof}

Note first that $f$ is unique, since if $g$ were another such homeomorphism then $g^{-1} \circ f$ would be an equivariant homeomorphism fixing all points on the section $\sigma$, and therefore $g^{-1} \circ f = \id_{\T^2}$.
Let $\pi \colon T^2 \to \sph^1$ be the projection to the orbit space.
Write $f$ as $p \mapsto \left( \phi \circ \pi (p) \right) \cdot p$, where $\phi \colon \sph^1 \to T^1$ is a continuous map from the orbit space into the circle group.

For any two sections $\sigma$ and $\sigma'$ there is some $k \in \zzz$ so that, in integral homology, $[\sigma'] - [\sigma] = k[h]$, where $[h]$ is the homology class of the orbit (see Section \ref{s:seifert}).
Since $f \circ \sigma = \sigma'$, we have $\deg (\phi) = k$.
By assumption $\sigma$ and $\sigma'$ are homologous, so that $\deg (\phi) = 0$.
It follows that $\phi$ is homotopic to a constant map and, in particular, to the map to the identity element of $T^1$. This induces the desired isotopy from $\id_{\T^2}$ to $f$.
\end{proof}

The following theorem generalizes Theorems 3.6 and 6.2 of \cite{F1} to the Alexandrov space setting.

\begin{theorem}[Uniqueness]\label{t:classification}
	Let $X_1^4$ and $X_2^4$ be two $4$-dimensional, closed, oriented Alexandrov spaces, each admitting isometric circle actions. Suppose that these actions are such that $\bar{X_j}^3 \cong \sph^3$ for $j=1,2$ and that for each $j$ the fixed-point set comprises three isolated points with the entire singular set lying on a locally flat unknot. Suppose further that the actions have equivalent invariants. Then there is an orientation-preserving equivariant homeomorphism $X_1 \cong X_2$.
\end{theorem}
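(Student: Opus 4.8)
The plan is to construct the equivariant homeomorphism by cutting each $X_j$ into the standard pieces of Section \ref{s:invariants}, matching the pieces using the coincidence of invariants, and then reconciling the gluings with the isotopy Lemma \ref{l:isotopy}. First, by Lemma \ref{l:equivalence} I may adjust the sections and labelings so that $X_1$ and $X_2$ carry \emph{literally identical} invariant triples; it then suffices to produce an orientation-preserving equivariant homeomorphism in this normalized situation. Decompose each orbit space as $\bar{X}_j = \bar{U}_j \cup \bar{W}_j$, where $\bar{U}_j$ is the solid-torus neighborhood of the unknot $\bar{c}_j$ and $\bar{W}_j = \cl(\bar{X}_j \setminus \bar{U}_j)$ is the complementary solid torus. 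Over $\bar{W}_j$ the action is free and, as established in Section \ref{s:invariants}, the bundle is trivial with section $\sigma_j$; hence $W_j \cong \bar{W}_j \times T^1$ and the chosen sections furnish an evident orientation-preserving equivariant homeomorphism $W_1 \to W_2$.

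Next I treat $\bar{U}_j = \bar{B}_1 \cup \bar{B}_2 \cup \bar{B}_3$. Each $B_i^{(j)} = \pi^{-1}(\bar{B}_i)$ is, by the Slice Theorem \ref{l:slice}, equivariantly homeomorphic to the cone on its space of directions $\Sigma_{p_i^{(j)}} \cong Z_i^{(j)}$. The Seifert lens space $Z_i^{(j)}$ carries the invariants $\{0; (\alpha_{i-1}, \beta_{i-1}), (\alpha_i, -\beta_i)\}$, which by hypothesis agree for $j=1,2$; by the Orlik--Raymond classification the two carry fiber-preserving homeomorphic $T^1$-actions, and coning such a homeomorphism (which extends equivariantly, fixing the cone vertex) yields an orientation-preserving equivariant homeomorphism $B_i^{(1)} \to B_i^{(2)}$.

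It remains to glue. The local homeomorphisms of the $B_i$ and of $W$ need not agree on the overlapping tori $\partial V_i$ and $\partial U$. Because the invariants coincide, the relevant curves on each boundary torus --- the images of the sections, the meridians $m_i$, and the orbits $h_i$ --- represent matching classes in homology. Lemma \ref{l:isotopy} then supplies orbit-preserving equivariant isotopies, inducing the identity on each quotient circle, that adjust one homeomorphism on a collar of a gluing torus so as to agree with its neighbor; such isotopies can be absorbed into the cone and product structures on collars without disturbing the matches already arranged on the other boundary components. Assembling the corrected pieces gives the desired orientation-preserving equivariant homeomorphism $X_1 \cong X_2$.

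The main obstacle is precisely this last step: arranging that all interface identifications be simultaneously compatible while tracking orientations. Equality of the invariant triple guarantees agreement of the gluing data only at the level of homology, and the freedom to vary a section on one torus is coupled, through the generalized Euler number \eqref{e:eulernumber}, to the sections on the others; the care required is to verify that the isotopies of Lemma \ref{l:isotopy} can be chosen so that correcting one interface does not spoil another. This is where the hypothesis that $\bar{c}$ is unknotted is essential, since it yields the clean product structure on $W$ that lets the boundary corrections propagate consistently, exactly as in Fintushel's argument but now without relying on the spaces of directions being spheres.
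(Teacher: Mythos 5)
Your opening moves coincide with the paper's proof: normalize via Lemma \ref{l:equivalence} so that the invariants are literally equal, identify the two orbit spaces, and use the sections $\sigma_j$ to build $f_W \colon W_1 \to W_2$ with $f_W \circ \sigma_1 = \sigma_2$. The gap is in your assembly step. Having built $f_W$ and, separately, equivariant homeomorphisms $B_i^{(1)} \to B_i^{(2)}$ by Orlik--Raymond plus coning, you must make all of these agree on the intersections of the pieces. But those intersections are not $2$-tori: $B_i \cap B_{i+1} = V_i$ is a $3$-dimensional Seifert-fibered solid torus, and $B_i \cap W$ is the part of $\partial U$ lying over an annulus in $\partial \bar{U}$, again $3$-dimensional. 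Lemma \ref{l:isotopy} concerns self-homeomorphisms of the $2$-torus $\T^2$ only, so it cannot by itself reconcile two equivariant homeomorphisms of a solid torus or of a circle bundle over an annulus; your phrase ``the overlapping tori $\partial V_i$ and $\partial U$'' conflates a $2$-torus with a $3$-manifold. Worse, each $B_i$ would have to be corrected on three parts of its boundary simultaneously (on $V_{i-1}$, on $V_i$, and on $B_i \cap W$), each $V_i$ is shared by two of the $B$'s, and these parts meet along corner tori; the assertion that corrections ``can be absorbed into the cone and product structures on collars without disturbing the matches already arranged'' is precisely the circular-constraint problem that needs proof, and homological agreement of curves does not by itself produce the required equivariant isotopies of $3$-dimensional pieces.

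The paper breaks this circularity by ordering the construction so that no such simultaneous reconciliation ever arises: it never constructs the $B_i$-homeomorphisms independently. First comes $f_W$; then, for each $i$, a model equivariant homeomorphism $f_i$ of the whole solid torus $V_{i,1} \to V_{i,2}$ (preserving chosen longitudes) is compared with $f_W$ only along the genuine $2$-torus $\partial V_{i,1} \subset \partial U_1 \subset W_1$, where the computation that both curves $\sigma_j \left( \partial \bar{V}_i \right)$ are homologous to $\nu \mapsto \left( \nu^{-\beta_i}, \nu^{\delta_i} \right)$ is exactly what lets Lemma \ref{l:isotopy} apply; absorbing that isotopy in a collar extends $f_W$ over each $V_{i,1}$. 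Only after all three solid tori are filled in are the $B_i$ treated, and by then their entire boundaries $Z_i$ are already matched, so the extension is completed by equivariant coning with no compatibility conditions left to check. If you wish to salvage your order of assembly, you would need an equivariant isotopy statement for homeomorphisms of Seifert-fibered solid tori (and of $\T^2 \times [0,1]$), rel the corner tori, which neither Lemma \ref{l:isotopy} nor anything else you cite provides.
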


\begin{proof}

	Recall that invariants are defined with respect to a section and a labeling of the curve. By Lemma \ref{l:equivalence}, we may choose the sections and labelings so that the two actions have invariants which are equal, rather than merely equivalent. 
	
	Denote by $p_{i,1}$, for $i=1,2,3$, the three fixed points in $X_1^4$. Denote the fixed points in $X_2^4$ by $p_{i,2}$. The $X_j$ induce orientations of the orbit spaces $\bar{X}_j$, and the orbit spaces can be identified via an orientation-preserving homeomorphism which carries $\bar{p}_{i,1} \mapsto \bar{p}_{i,2}$ for each $i$. From here on we  denote both $\bar{X}_1 \cong \bar{X}_2$ by $\bar{X}$. Let $\bar{c}$ be the unknotted closed curve, let $\bar{U}$ be a regular neighborhood of $\bar{c}$, and let $\bar{U}$ be decomposed as described in Section \ref{s:invariants} (see Figure \ref{f:orbitspace}). 
	
	Let $\bar{W}=\cl(\bar{X} \setminus \bar{U})$  and let $\sigma_j \colon \bar{W} \to W_j \subset X_j^4$ be the sections, which have been chosen so that the invariants are equal.
	Using $\sigma_1$ and $\sigma_2$, we construct an orientation-preserving equivariant homeomorphism $f_W \colon W_1 \to W_2$ with $\bar{f}_{W} = \left.\id\right|_{\bar{W}}$ satisfying $f_W \circ \sigma_1 = \sigma_2$.
	
	Now consider two solid tori, $V_{i,1} \subset U_1$ and $V_{i,2} \subset U_2$, both over the disk $\bar{V}_i \subset \bar{U} \subset \bar{X}$, corresponding to the pair of invariants $(\alpha_i, \beta_i)$. Let $\left(\begin{smallmatrix}\alpha_i & \gamma_i \\ -\beta_i & \delta_i\end{smallmatrix}\right) \in \SL(2,\zzz)$.
	 Both $V_{i,j}$ are equivariantly homeomorphic to $\sph^1 \times \disk^2$ with a circle action given by $\mu \cdot ({\rm e}^{{\rm i} \theta}, r {\rm e}^{{\rm i} \phi}) = (\mu^{\alpha_i} {\rm e}^{{\rm i} \theta}, \mu^{\gamma_i} r {\rm e}^{{\rm i} \phi})$.
	 The choice of element in $\SL(2,\zzz)$ corresponds to specifying longitudinal circles on each $\d V_{i,j}$. 
		The solid tori $V_{i,1}$ and $V_{i,2}$ are therefore equivariantly homeomorphic to each other, and we choose $f_i \colon V_{i,1} \to V_{i,2}$, an equivariant homeomorphism preserving the specified longitudes, so that $\bar{f}_i = \left. \id \right|_{\bar{V}_i}$.

	Now consider the oriented curves $\sigma_j \left( \d \bar{V}_{i} \right)$. In $\d \left( \sph^1 \times \disk^2 \right)$ these curves are both homologous to $\nu \mapsto \left( \nu^{-\beta_i}, \nu^{\delta_i} \right)$. 
Therefore the equivariant homeomorphism $f_i$ is such that $\left. f_i \circ \sigma_1 \right|_{\d \bar{V}_{i}} := \sigma'_2$ is a new section homologous to $\left. \sigma_2 \right|_{\d \bar{V}_{i}}$.

We now let $h = \left. f_i \circ f_W^{-1} \right|_{\partial V_{i,2}}$ be the induced self-homeomorphism of $\partial V_{i,2}$.
By definition $h \circ \sigma_2 =  \sigma'_2$, and this new section $\sigma'_2$ is homologous to $\sigma_2$.
Therefore we can apply Lemma \ref{l:isotopy} to obtain an orbit-preserving equivariant isotopy from $h$ to $\left.\id\right|_{\partial V_{i,2}}$.
Composing, we obtain an isotopy from $\left. h \circ f_W \right| _{\partial V_{i,1}} = \left. f_i \right| _{\partial V_{i,1}}$ to $\left. f_W \right| _{\partial V_{i,1}}$. 
 
Since $f_i$ is defined over all of $V_{i,1}$, this allows us to extend $f_W$ over all of $V_{i,1}$ as well. 	We can then extend the homeomorphism over the interior of the $B_i$  by coning, which gives us the result.
 \end{proof}

As previously noted in Remark \ref{generalequivalence}, this result can be generalized to classify actions with any number of isolated fixed points, $n$, provided the entire singular set lies on an unknotted closed curve in $\sph^3$.

\subsection{The weighted complex projective space \texorpdfstring{$\wcp$}{CP\textasciicircum 2\_\{a,b,c\}}}\label{s:cp2}

Now, for any possible set of invariants, we construct a finite quotient of a weighted complex projective space with a circle action with those invariants.

The circle actions constructed on weighted complex projective spaces are induced by $T^3$ actions on $\sph^5$. We first describe the basic set-up.

When $T^3$ acts on $\sph^5$, there are three distinguished circle subgroups, up to orientation, acting fixed-point-homogeneously, and we  take these circles to be the generators of $H_1(T^3; \zzz)$. 

The quotient space $\sph^5 / T^3$ is a $2$-simplex, $\Delta^2$. The isotropy groups at the edges are the circle subgroups of $T^3$ that act fixed-point-homogeneously on $\sph^5$ and the isotropy groups at the vertices are the $T^2$ subgroups generated by pairs of these.

Let $a,b,c$ be integers satisfying $\gcd(a,b,c) = 1$. Define the subgroup $T^1_{a,b,c} \hookrightarrow T^3$ by the inclusion $\lambda \mapsto (\lambda^a, \lambda^b, \lambda^c)$. This group acts almost freely on $\sph^5$, and the quotient space $\sph^5 / T^1_{a,b,c}$ is the weighted complex projective space $\wcp$. It has an induced action by  
$T^2 = T^3 /T^1_{a,b,c}$. Note that by changing the signs of the weights, there are eight different ways to represent each $\wcp$, four in each orientation.

Since $T^1_{a,b,c}$ acts almost freely, the isotropy subgroups of the $T^2$ action on $\wcp$ are isomorphic to the isotropy subgroups of the $T^3$ action of $\sph^5$.
In particular, there are three fixed points of the $T^2$ action and there are three distinct circle subgroups of $T^2$, each acting fixed-point-homogeneously on $\wcp$. It follows that any other circle subgroup of $T^2$ has exactly three fixed points. We denote the homology classes in $H_1(T^2; \zzz)$ of the fixed-point-homogeneous circle subgroups  by $m_1$, $m_2$ and $m_3$.

Choose a basis for $T^2$ so that none of the fixed-point-homogeneous circles are basis elements. Consider the circle subgroup $T^1 \subgp T^2$ given by $\mu \mapsto (1,\mu)$. Denote its corresponding homology class by $h \in H^1(T^2; \zzz)$, so that $h$ is a basis element of $H_1 (T^2 ; \zzz)$ and $h \neq \pm m_i$ for any $i$. 
The numbers $\alpha_i = \I{h}{m_i}$, provide the order of the finite isotropy groups. Since the choice of orientation of the $m_i$ was arbitrary, we may assume $\alpha_i > 0$.

The orbit space $\wcp / T^1$ is homeomorphic to $\sph^3$. The $T^3$ action on $\sph^5$ induces one final circle action on $\wcp / T^1$. As observed above, the orbit space is homeomorphic to the $2$-simplex $\Delta^2$. There is an unknotted closed curve in $\sph^3$ which contains $\bar{F} \cup \bar{E}$, and this curve is fixed by the circle action. The image of the unknotted curve is $\partial \Delta ^2$.

\begin{theorem}[Existence]\label{t:wcp}
	Consider a set of invariants $$\left( \frac{\beta_1}{\alpha_1}, \frac{\beta_2}{\alpha_2}, \frac{\beta_3}{\alpha_3}\right) \in \qqq^3$$ so that the fractions $\frac{\beta_i}{\alpha_i}$, for $i=1,2,3$, are pairwise unequal. Then there is a weighted complex projective space $\wcp$ and a finite group $\Gamma$ so that the standard $T^2$ action on $\wcp / \Gamma$ contains a circle action with these invariants. 
\end{theorem}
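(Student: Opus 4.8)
The plan is to make the dictionary between the lattice data defining $\wcp$ and the Seifert invariants completely explicit, and then invert it. I would work in the lattice $H_1(T^3;\zzz) \cong \zzz^3$, in which the three fixed-point-homogeneous circles are the standard basis vectors $e_1,e_2,e_3$ (whose images in $H_1(T^2;\zzz)$ are $m_1,m_2,m_3$). The construction depends on a weight vector $\vec{p} = (a,b,c)$, defining $T^1_{a,b,c}$ and hence $\wcp = \sph^5/T^1_{a,b,c}$, together with a lift $\vec{h}\in\zzz^3$ of the acting circle. Since $\vec{p}$ is primitive, the intersection pairing on $H_1(T^2;\zzz) = \zzz^3/\langle\vec{p}\rangle$ is the mixed determinant $(\bar u,\bar v)\mapsto\det(\vec{u},\vec{v},\vec{p})$, so the orders of the finite isotropy groups are $\alpha_i = \I{h}{m_i} = (\vec{h}\times\vec{p})_i$; that is, $\vec{\alpha}:=(\alpha_1,\alpha_2,\alpha_3) = \vec{h}\times\vec{p}$ up to orientation.

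Next I would carry out the corresponding local computation for the $\beta_i$. At each exceptional orbit, the image of an edge $\{z_i=0\}$, one identifies inside $H_1(\partial V_i;\zzz)$ the meridian, the principal orbit, and the section $q_i=\sigma(\partial\bar{V}_i)$ in terms of the $T^3$-lattice, the section being encoded by an auxiliary vector $\vec{r}$; the expected outcome is the companion identity $\vec{\beta}:=(\beta_1,\beta_2,\beta_3) = \vec{r}\times\vec{p}$. \emph{This is the main obstacle}: it is the one genuinely computational step, requiring that the conventions of Section \ref{s:seifert} be tracked through the toric normal form at each edge. It carries a built-in consistency check: replacing $\vec{r}$ by $\vec{r}+\vec{h}$ sends $\beta_i\mapsto\beta_i+\alpha_i$, i.e. each fraction to itself plus one, which is exactly relation (3) of the equivalence $\sim$; cyclic relabeling of the $e_i$ and orientation reversal reproduce relations (1) and (2).

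With the dictionary in hand the theorem becomes a lattice problem. Given pairwise-unequal fractions $\frac{\beta_i}{\alpha_i}$, both $\vec{\alpha}$ and $\vec{\beta}$ must be orthogonal to $\vec{p}$, which forces $\vec{p}$ to be the primitive integer vector parallel to $\vec{\alpha}\times\vec{\beta}$. Its components are, up to sign, the $2\times2$ minors $\alpha_j\beta_k-\alpha_k\beta_j$, which are nonzero precisely because the fractions are pairwise distinct; this is exactly what guarantees $a,b,c\neq 0$, so that $\wcp$ is a genuine weighted projective space, a vanishing minor forcing a zero weight (the excluded case of two equal fractions). Since $\vec{\alpha}$ and $\vec{\beta}$ now lie in $\vec{p}^{\perp}\cap\zzz^3$, which for primitive $\vec{p}$ is exactly the image of the homomorphism $\vec{u}\mapsto\vec{u}\times\vec{p}$, I can solve $\vec{h}\times\vec{p}=\vec{\alpha}$ and $\vec{r}\times\vec{p}=\vec{\beta}$ over $\zzz$, producing the acting circle and the section.

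Finally I would address the factor $k=\gcd(\alpha_1,\alpha_2,\alpha_3)$, which is where the finite quotient enters. The sublattice $\langle\vec{h},\vec{p}\rangle$ is saturated if and only if its Pl\"ucker vector $\vec{h}\times\vec{p}=\vec{\alpha}$ is primitive, i.e. iff $k=1$; then $h$ is a primitive class in $H_1(T^2;\zzz)$, the orbit space $\wcp/T^1$ is $\sph^3$ as in the preceding discussion, and I take $\Gamma$ trivial. When $k>1$ the class $h$ is non-primitive and $\langle\vec{h},\vec{p}\rangle$ is non-saturated; I would take $\Gamma\cong\zzz_k$ to be the cyclic group that saturates it (a finite subgroup of $T^2=T^3/T^1_{a,b,c}$), so that $\wcp/\Gamma$ has orbit space $\sph^5/\Lambda\cong\sph^3$ for the saturation $\Lambda$, while realizing the full orders $\alpha_i$. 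Verifying that $\Gamma$ scales the invariants exactly to the target is a secondary technical step. In every case the resulting action has invariants equivalent to $\left(\frac{\beta_1}{\alpha_1},\frac{\beta_2}{\alpha_2},\frac{\beta_3}{\alpha_3}\right)$, and the Uniqueness Theorem \ref{t:classification} shows this realizes the only constraint, completing the proof.
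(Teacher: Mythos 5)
Your overall route is the same as the paper's: the paper also packages the target data into the matrix $p_*=\left(\begin{smallmatrix}\alpha_1&\alpha_2&\alpha_3\\ \beta_1&\beta_2&\beta_3\end{smallmatrix}\right)$, observes it has rank two precisely because the fractions are pairwise distinct, and takes the weight vector $(a,b,c)$ to span its kernel --- exactly your primitive vector parallel to $\vec{\alpha}\times\vec{\beta}$. The dictionary you defer as ``the main obstacle'' is what the paper establishes geometrically (the section built from $\tau$ and the complementary circle $q$, the identification of the $m_i$ as meridians, yielding $m_i=\alpha_i q+\beta_i h$), and your cross-product form of it is correct up to sign.

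The genuine gap is in your inversion step and in the treatment of $\Gamma$. The relation $\vec{\beta}=\vec{r}\times\vec{p}$ presupposes that $[\vec{r}]$ is the class of a section of the $h$-action, i.e.\ that $\left([\vec{r}],[\vec{h}]\right)$ is a \emph{basis} of $H_1(T^2;\zzz)=\zzz^3/\langle\vec{p}\rangle$, equivalently $\det(\vec{r},\vec{h},\vec{p})=\pm1$. Solving $\vec{h}\times\vec{p}=\vec{\alpha}$ and $\vec{r}\times\vec{p}=\vec{\beta}$ separately does not give this, and the identity $(\vec{h}\times\vec{p})\times(\vec{r}\times\vec{p})=\det(\vec{h},\vec{r},\vec{p})\,\vec{p}$ shows the obstruction is the content (gcd of coordinates) of $\vec{\alpha}\times\vec{\beta}$, not of $\vec{\alpha}$. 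So your criterion ``$\Gamma$ trivial iff $k=\gcd(\alpha_i)=1$'' is false. Concretely, take $(\alpha_i,\beta_i)=(1,1),(1,-1),(3,1)$, i.e.\ invariants $\left(1,-1,\tfrac13\right)$: all pairs coprime, fractions distinct, $\gcd(\alpha_i)=\gcd(\beta_i)=1$, yet $\vec{\alpha}\times\vec{\beta}=(4,2,-2)=2\,(2,1,-1)$. On $\cp^2_{2,1,-1}$ the only circle subgroups with isotropy orders $\{1,1,3\}$ are $\pm(u_1-u_2)$ and $\pm(3u_1+u_2)$, where $u_i=[e_i]$ is a basis of $\zzz^3/\langle(2,1,-1)\rangle$; computing their invariants against an honest basis $(q,h)$ gives triples equivalent to $\left(0,1,\tfrac13\right)$ and $\left(0,\tfrac13,1\right)$. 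Neither is equivalent to $\left(1,-1,\tfrac13\right)$: the sum $\sum\beta_i/\alpha_i$ is an invariant of $\sim$ up to sign and modulo $3\zzz$, and $\tfrac43\not\equiv\pm\tfrac13\pmod{3}$. Hence these invariants are \emph{not} realizable with trivial $\Gamma$; they require the quotient of $\cp^2_{2,1,-1}$ by a $\zzz_2\subset T^2$. The correct bookkeeping is that $\Gamma$ is the cokernel of $p_*$ (the component group of the kernel of the induced homomorphism $T^3\to T^2$), whose order is the content of $\vec{\alpha}\times\vec{\beta}$; this is neither detected by $\gcd(\alpha_i)$ nor isomorphic to your $\zzz_k$ in general (for $(2,1),(2,-1),(2,3)$ one needs $\zzz_4$, not $\zzz_2$). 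Your closing remark that verifying the effect of $\Gamma$ on the invariants is ``a secondary technical step'' is therefore exactly where the proof breaks down. (The paper's own sufficient condition $\bar{\alpha}=\bar{\beta}=1$ is also defeated by the example above, but the repair --- replacing the connected kernel $T^1_{a,b,c}$ by the full, possibly disconnected, kernel of $T^3\to T^2$ --- is forced once one works with $p_*$ as a whole rather than with its two rows separately, as you do.)
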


\begin{proof}
	Consider an arbitrary weighted complex projective space, $\wcp$. As noted above, $\wcp$ admits an action by $T^2$, with orbit space a $2$-simplex $\Delta^2$, and three fixed-point-homogeneous circle subgroups represented in homology by $\pm m_i$. 
	
	Let $T^1$ be the circle subgroup of $T^2$ defined by $\mu \mapsto (1,\mu)$, and denote its homology class by $h$. By a suitable choice of basis for $T^2$, we may assume $h \neq \pm m_i$.  Denote the homology class of the circle subgroup defined by $\nu \mapsto (\nu, 1)$ by $q$. 
	The basis chosen for $T^2$ determines an orientation on each orbit so that $\I{q}{h} = +1$.

	Orient $\Delta^2$ so that its orientation followed by the orientation of the $T^2$ orbits gives the orientation of $\wcp$, and orient $\partial \Delta^2$ by its inward normal. Consider a collar neighborhood $Y$ of $\partial \Delta^2$. Fix three points $x_i$ on $\partial \Delta^2$, one on each edge, so that $\partial \Delta^2$ with its orientation is traversed as $(x_1 x_2 x_3)$.
	Draw curves $c_i$ starting from each $x_i$ and ending at $y_i \in \partial Y$ so that the $c_i$ split $Y$ into three components as shown in Figure \ref{f:delta}.
	
\begin{figure}
	\begin{tikzpicture}
	\node[anchor=south west,inner sep=0] (image) at (0,0) {\includegraphics[height=5cm]{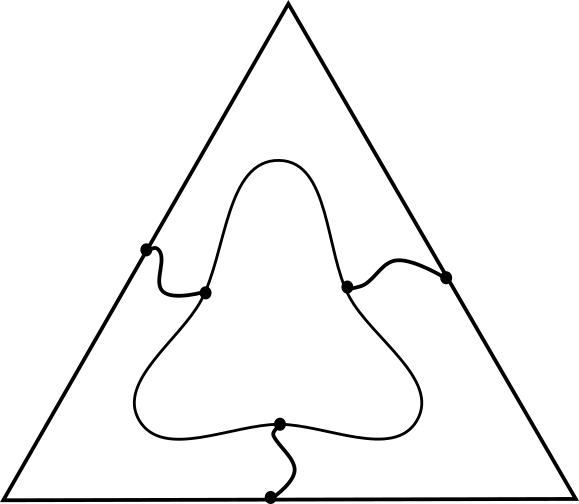}};
	\begin{scope}[
	x={(image.south east)},
	y={(image.north west)}
	]
	\node  at (0.5,-0.06) {$x_1$};
	\node  at (0.82,0.48) {$x_2$};
	\node  at (0.2,0.54) {$x_3$};
	\node  at (0.5,0.21) {$y_1$};
	\node  at (0.55,0.44) {$y_2$};
	\node  at (0.41,0.41) {$y_3$};
	\node  at (0.55,0.08) {$c_1$};
	\node  at (0.67,0.53) {$c_2$};
	\node  at (0.28,0.385) {$c_3$};
	\node  at (0.12,0.085) {$Y$};
	\end{scope}
	\end{tikzpicture}
	\caption{The structure of $\wcp / T^2$, which is the quotient space of $\bar{X}^3$, as depicted in Figure \ref{f:orbitspace}, by a further circle action.}
	\label{f:delta}
\end{figure}
	
	The preimage  in $\wcp / T^1 \cong \sph^3$ of this structure on $Y$ consists of an unknot, $\bar{c}$, with a tubular neighborhood $\bar{U}$, and three separating disks, denoted by $\bar{V}_i$, as depicted in Figure \ref{f:orbitspace}. We can use these to define the invariants of the action. 
	
	Orient $\sph^3$ by the orientation of $\Delta^2$, followed by the orientation of the orbit of the induced circle action on $\sph^3$. Our choice of $T^1 \subgp T^2$ as the second basis element guarantees that this orientation of $\sph^3$, followed by the orientation of the orbit of $T^1$ on $\wcp$, is the orientation of $\wcp$, as required. 
	
	Orient the unknot $\bar{c}$ by $\partial \Delta^2$. Then if $\partial \bar{V}_i$ is given its orientation as the circle orbit over $y_i$, and $\bar{V}_i$ is oriented by $\partial \bar{V}_i$ with respect to the inward normal, $\I{\bar{V}_i}{\bar{c}} = +1$. To see this, note that the inward normal to $\partial \bar{V}_i$ descends to a vector which points in the opposite direction to the inward normal to $\partial \Delta^2$.

	The principal portion of the $T^2$ action on $\wcp$ is a trivial principal bundle over the interior of $\Delta^2$ and so admits a section, $\tau$. 
	We can use the circle subgroup of $T^2$ given by $\nu \mapsto (\nu, 1)$, whose homology class is denoted by $q$, together with the section $\tau$ to define a section of the $T^1$ action, $\sigma \colon \left( \sph^3 \setminus \bar{c} \right) \to \wcp$.

	To use $\sigma$ to define the invariants, we must identify the oriented curve $\sigma \left( \partial \bar{V}_i \right) \subset \d V_i$. Since $\d \bar{V}_i$ is oriented as the circle orbit over $y_i$, $\sigma \left( \partial \bar{V}_i \right) = q$. Recall, however, that when $\d V_i$ is considered as the $T^2$ orbit over $y_i$, we have $\I{h}{q} = -1$, which has the wrong sign for calculating invariants. This is resolved by observing that the orientation of $\d V_i$ as a boundary is, in fact, opposite to its orientation as an orbit.
	
	We also need to identify the meridian curves. Since the circle subgroup corresponding to the curve $m_i$ fixes the core of the solid torus $V_i$, $m_i$ is contractible in $V_i$ and so is a meridian.
	
	The intersection number $\I{h}{m_i} = \alpha_i$, the order of the finite isotropy, while $\I{q}{m_i} = - \beta_i$. Therefore, we need $m_i = \alpha_i q + \beta_i h$.
	
	The map $p \colon T^3 \to T^3 / T^1_{a,b,c}$ induces a map $p_* \colon H_1 (T^3 ; \zzz) \to H_1 (T^2 ; \zzz)$ in homology. Clearly $p$  brings the fixed-point-homogeneous circles in $T^3$ to the desired $m_i \in H_1 \left( T^2 ; \zzz \right)$
	if $p_* = \left(\begin{smallmatrix}\alpha_1&\alpha_2&\alpha_3\\\beta_1&\beta_2&\beta_3\end{smallmatrix}\right)$. 
	Note that $p_*$ is of full rank. If one row were a multiple of the other, then $\frac{\beta_i}{\alpha_i}$ would have the same value for each $i$, but each fraction must be different.
	
	In case $\bar{\alpha} = \gcd(\alpha_i)$ and $\bar{\beta} = \gcd(\beta_i)$ are both $1$, to realize this map, it is enough to find the kernel of $p_*$ as a linear transformation $\rrr^3 \to \rrr^2$, and note that this corresponds to a circle subgroup of $T^3$, which should be taken to be $T^1_{a,b,c}$. The resulting $\wcp$ carries a $T^2$ action. The quotient by the subgroup $\zzz_{\bar{\alpha}} \times \zzz_{\bar{\beta}}$ can then be taken if necessary.
\end{proof}

\begin{remark}
	It might appear at first sight that not all $\wcp$ can be obtained in this way. For instance, any invariants associated to $\cp^2 = \cp^2_{1,1,1}$ must satisfy $\left(\begin{smallmatrix}\alpha_1&\alpha_2&\alpha_3\\\beta_1&\beta_2&\beta_3\end{smallmatrix}\right) \left(\begin{smallmatrix}1\\1\\1\end{smallmatrix}\right) = \left(\begin{smallmatrix}0\\0\end{smallmatrix}\right)$,
	which is not compatible with the constraint that $\alpha_i > 0$. However,  recalling that $\cp^2_{1,1,1} \cong \cp^2_{1,-1,-1}$, for example, the equation can be solved.
\end{remark}

\section{Almost maximal symmetry rank: dimension \texorpdfstring{$3$}{3}}\label{s:amsr3}

We consider circle actions on positively curved Alexandrov spaces of dimension three and four, beginning in this section with dimension three.

\begin{remark}
If a positively curved Alexandrov space has boundary, then there is a unique point at maximal distance from the boundary and the space is homeomorphic to a cone, with this ``soul'' point corresponding to the cone point. The soul point is fixed by the action, and by the Slice Theorem \ref{l:slice} the isotropy action at the soul determines the equivariant homeomorphism type (see~\cite{HS}). 
For this reason, we  always assume that the spaces have no boundary.  
\end{remark}

We recall first the manifold classification.
The topological classification of the underlying spaces follows from the work of Hamilton \cite{Ham}, which shows that the positively curved manifolds are precisely the spherical space forms. The actions on these manifolds were classified by Raymond~\cite{Ray}.

\begin{theorem}\label{p:amsrmanifold3} Let $T^1$ act isometrically and effectively on $M^3$, a closed, positively curved Riemannian manifold.
	Then  $M$ is equivariantly diffeomorphic to $\sph^3/\Gamma$ with a linear action, where $\Gamma$ is a freely acting finite subgroup of $\SO(4)$.
\end{theorem}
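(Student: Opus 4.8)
The plan is to combine the topological classification of positively curved $3$-manifolds with a dichotomy according to whether the action has a fixed point, reducing in each case to an explicit linear model. First I would record two consequences of positive curvature. By the odd-dimensional case of the Generalized Synge's Theorem \ref{GST}, the manifold $M^3$ is orientable, so any presentation $M \cong \sph^3/\Gamma$ automatically has $\Gamma \subgp \SO(4)$. By Proposition \ref{p:GGG}, applied to $M$, the only possibilities are the suspension of $\rp^2$ or a spherical manifold; since a suspension of $\rp^2$ is not a manifold, $M$ is homeomorphic to a spherical space form. Thus the topological type is already settled, and the real content is showing the action is equivalent to a \emph{linear} one. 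Finally, by Proposition \ref{p:evencodim} every component of $M^{T^1}$ has even codimension, hence dimension $1$ or $3$; effectiveness rules out dimension $3$, so $M^{T^1}$ is empty or a disjoint union of circles.

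If $M^{T^1} \neq \emptyset$, then since $\dim M^{T^1} = 1 = \dim(M/T^1) - 1$, the action is fixed-point-homogeneous, and I would apply Theorem \ref{t:fph}. Uniqueness of the soul orbit forces it to be a circle orbit $T^1(p)$ with $T^1_p \cong \zzz_k$ finite (a second fixed circle would violate that uniqueness), so the space of normal directions $\nu$ at $p$ is $\sph^1$. The theorem then supplies an equivariant homeomorphism $M \cong (\sph^1 * T^1)/T^1_p \cong \sph^3/T^1_p$, where $T^1_p$ acts linearly, by the isotropy rotation on the $\sph^1$ factor and by the inverse rotation on $T^1$, and the residual circle acts as the join of the trivial action with left translation. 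This is precisely a linear circle action on a lens space, and in dimension $3$ the equivariant homeomorphism can be smoothed, using the local models of the Slice Theorem \ref{l:slice}, to an equivariant diffeomorphism.

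If $M^{T^1} = \emptyset$, the only isotropy groups are finite cyclic, so the action is almost free and $\pi \colon M \to \bar M = M/T^1$ is a Seifert fibration over a closed, positively curved $2$-dimensional Alexandrov space $\bar M$, which by Gauss--Bonnet is a spherical $2$-orbifold (or a bad orbifold, a teardrop or spindle, fibering over a lens space). I would then invoke the Orlik--Raymond classification of circle actions on $3$-manifolds~\cite{OR} to read off $M$ from its unnormalized Seifert invariants, following the conventions of Section \ref{s:seifert}, and match these against the linear models: the circle subgroups of $\SO(4)$ acting on $\sph^3$ descend, through the appropriate Hopf-type fibration $\sph^3 \to \sph^2$, to exactly the Seifert fibrations of nonzero Euler number over the spherical bases. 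Positive curvature forces the Euler number to be nonzero, ruling out $\sph^2 \times \sph^1$ and the other collapsing cases, leaving only spherical space forms $\sph^3/\Gamma$ with $\Gamma \subgp \SO(4)$, each carrying the induced linear action.

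The hard part will be this fixed-point-free case: producing the explicit linear model and upgrading the fiber-preserving homeomorphism furnished by the Seifert data to a genuinely \emph{equivariant} diffeomorphism. This requires organizing the Seifert invariants carefully, checking that each admissible invariant is realized by a linear action of a circle subgroup of $\SO(4)$, and verifying uniqueness up to equivariant diffeomorphism; the bad-orbifold bases must be treated separately but still yield lens spaces with linear actions. By contrast, the fixed-point case is essentially immediate once Theorem \ref{t:fph} is in hand, and the entire topological identification is handled at the outset by Proposition \ref{p:GGG} together with Synge's Theorem \ref{GST}.
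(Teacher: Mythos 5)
Your argument is correct in outline, but it is not the paper's proof: the paper offers no proof of Theorem \ref{p:amsrmanifold3} beyond a citation, remarking only that the topological classification follows from the resolution of the Poincar\'e Conjecture and that the actions themselves were classified by Raymond \cite{Ray}. What you have written is, instead, essentially the paper's proof of the more general Proposition \ref{p:3space} (the Alexandrov version), specialized to manifolds: orientability via the Generalized Synge Theorem \ref{GST}, even codimension of the fixed-point set via Proposition \ref{p:evencodim}, and the dichotomy between the fixed-point case (handled by fixed-point homogeneity and Theorem \ref{t:fph}) and the fixed-point-free case (handled by Seifert-fibration theory). The trade-off is this: the paper's citation is shorter and yields the smooth equivariant statement immediately, since Raymond's classification is already up to equivariant diffeomorphism; your route is more self-contained relative to the paper's toolkit and explains structurally why the answer is a list of linear actions on space forms, but in the fixed-point-free case you still end up deferring to the Orlik--Raymond classification \cite{OR}, so the external dependence is relocated rather than removed.

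Two points need tightening. First, your reason that the soul orbit has finite isotropy --- ``a second fixed circle would violate that uniqueness'' --- is not a proof: the distance to $F$ need not be constant along a second fixed circle, so uniqueness of the orbit at maximal distance does not by itself rule out a fixed point as the soul. The gap is harmless, since if $T^1_p = T^1$ then Theorem \ref{t:fph} gives $M \cong (\sph^2 * T^1)/T^1 \cong \sph^3$ with the suspension of a rotation, again a linear action (alternatively, in that case the fixed set of the resulting model is a single circle through the soul, contradicting $d(F,p)>0$), but as written the step is asserted rather than proved. Second, Theorem \ref{t:fph} and the Slice Theorem \ref{l:slice} are topological statements, delivering equivariant homeomorphisms, whereas the theorem asserts equivariant diffeomorphism; rather than gesturing at smoothing in dimension $3$, in the Riemannian setting you should quote the smooth fixed-point-homogeneous theorem of Grove and Searle \cite{GS2}, and in the fixed-point-free case take the equivariant diffeomorphism directly from \cite{OR} or \cite{Ray}, which are already smooth classifications.
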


The extension of Theorem \ref{p:amsrmanifold3} to Alexandrov spaces in dimension $3$ is straightforward, and we  give it without the hypothesis of orientability.

\begin{proposition}[Circle actions in dimension $3$]\label{p:3space}
	Let $T^1$ act isometrically and effectively on $X^3$, where $X$ is a $3$-dimensional, closed, positively curved Alexandrov space. Then $X^3$  is equivariantly homeomorphic to $\sph^3/\Gamma$ with a linear $T^1$ action, where $\Gamma$ is a finite subgroup of $\O(4)$.
\end{proposition}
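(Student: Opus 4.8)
The plan is to reduce to the already-established classification of positively curved $3$-spaces, Proposition \ref{p:GGG}, and then to analyze the $T^1$-action on each possibility. By Proposition \ref{p:GGG}, $X^3$ is homeomorphic either to a spherical manifold $\sph^3/\Gamma$ with $\Gamma \subgp \O(4)$ acting freely, or to the suspension of $\rp^2$. I would handle these two cases separately.

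First I would dispose of the suspension of $\rp^2$, which is the genuinely new Alexandrov case, having two topologically singular points. The key tool is Lemma \ref{l:fixedpoint}: since any isometric $T^1$-action on a compact, positively curved, even-dimensional space has a fixed point, I would instead use the odd-dimensional structure directly. The two cone points (the suspension vertices) are the only topological singularities, and their spaces of directions are $\rp^2$; since an isometry must permute the singular set, the $T^1$-action fixes each cone point (a connected group cannot swap two points). The action thus restricts to an action on the equatorial $\Susp(\rp^2)$-structure, and by the Slice Theorem \ref{l:slice} the equivariant homeomorphism type near a cone point is determined by the isotropy action on $K\nu_p$ where $\nu_p = \rp^2$. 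The crux is to show this forces $X^3 \cong \sph^3/\Gamma$ equivariantly: I would realize $\Susp(\rp^2)$ as $\sph^3/\zzz_2$ for the appropriate involution, lift the circle action to $\sph^3$, and identify it as linear. This uses that $\Susp(\rp^2) = \sph^3/\zzz_2$ where $\zzz_2$ is generated by an element of $\O(4)$, and that the circle action lifts because the obstruction lies in a cohomology group that vanishes for a circle.

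For the spherical manifold case, I would appeal to the manifold classification, Theorem \ref{p:amsrmanifold3} of Raymond, but with care, since that theorem is stated for smooth Riemannian manifolds with smooth actions, whereas here the action is merely an isometric Alexandrov action and $X$ is only a \emph{topological} manifold carrying an Alexandrov metric. The honest route is to pass to the orbit space $\bar{X} = X^3/T^1$, a positively curved $2$-dimensional Alexandrov space (hence a sphere, disk, or $\rp^2$ with cone points, classified by Gauss--Bonnet-type constraints), and reconstruct the Seifert-fibered structure using the invariants developed in Section \ref{s:seifert}. Combining the Seifert data with the Orlik--Raymond classification of circle actions on $3$-manifolds (invoked later in the excerpt) identifies the total space and the action up to the stated equivalence, and positive curvature forces the base orbifold to be spherical, pinning down $\Gamma$ as a finite subgroup of $\O(4)$ acting linearly.

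The main obstacle I anticipate is precisely the lift-and-linearize step for $\Susp(\rp^2)$: showing not merely that $X \cong \sph^3/\Gamma$ topologically, but that the given isometric $T^1$-action is \emph{equivariantly} homeomorphic to a \emph{linear} one. The difficulty is that Alexandrov isometric actions are only topological near singular points, so one cannot differentiate; the argument must be carried out using the Slice Theorem to pin the local model at each fixed point and then a coning/gluing argument (in the spirit of the Uniqueness Theorem \ref{t:classification}) to propagate this to a global equivariant homeomorphism with the linear model. I expect the proof to lean on the fact that, once the orbit space and the isotropy data along the singular locus are fixed, the equivariant homeomorphism type is determined, so matching these data to the linear model suffices.
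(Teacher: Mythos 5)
Your starting point (Proposition \ref{p:GGG}) and your observation that the cone points of $\Susp(\rp^2)$ must be fixed are both correct, but your case division by homeomorphism type, rather than by whether the action has fixed points, leads you away from the short argument and leaves genuine gaps. The paper's key step is Proposition \ref{p:evencodim}: any non-empty fixed-point set has even codimension, hence is $1$-dimensional, so the action is fixed-point-homogeneous and Theorem \ref{t:fph} immediately gives the equivariant classification (lens space, $\Susp(\rp^2)$, or $\disk^3$, each with a linear action). In particular the cone points are never isolated fixed points --- in the linear model they are endpoints of an arc of fixed points --- and this one observation disposes of your ``crux'' case at a stroke. The fixed-point-free case is then automatically a manifold case (a point with $\Sigma_p \cong \rp^2$ is topologically singular and isolated, so it would have to be fixed), and the action is a Seifert fibration of a spherical manifold, all of which are known to arise from linear actions.

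Two concrete gaps in your plan. First, in your manifold case, the Seifert-invariant machinery of Section \ref{s:seifert} assumes the action is fixed-point-free, but spherical manifolds also carry isometric circle actions \emph{with} fixed points (rotations of $\sph^3$, lens spaces with a fixed circle); your proposed reconstruction via Seifert data and a spherical base orbifold does not cover these, so you would either have to invoke the full Orlik--Raymond orbit-data classification (which does include fixed points) or, as the paper does, notice that any fixed point forces fixed-point homogeneity. Second, in the $\Susp(\rp^2)$ case, lifting the circle action to the double branched cover $\sph^3$ is the easy part (a connected group action always lifts to the universal cover of the manifold part and extends over the two added points); the cohomological obstruction you mention is not where the difficulty lies. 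The real issue is that linearizing the lifted circle action (via Raymond's classification, using the local structure supplied by the Slice Theorem \ref{l:slice}) is not enough: you must linearize the lifted action \emph{and} the commuting deck involution simultaneously, i.e. show the pair is equivariantly conjugate to a linear pair, since otherwise the conjugacy upstairs does not descend to an equivariant homeomorphism of the quotient. That simultaneous linearization is precisely the work that Theorem \ref{t:fph} packages, and your sketch does not address it.
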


\begin{proof} It was shown in~\cite{GGG2} that every positively curved Alexandrov $3$-space, $X^3$, is homeomorphic to either a spherical manifold or  
$\Susp \left( \rp^2 \right)$. If it is a manifold, the result follows by Raymond \cite{Ray}. If it is $\Susp \left( \rp^2 \right)$, then it follows by the classification of circle actions on closed three-dimensional Alexandrov spaces \cite{NZ}.
\end{proof}

Note that not every $\Gamma < \O(4)$ yields an orbifold $\sph^3/\Gamma$ which admits a circle action: Dunbar~\cite{Dunbar} demonstrates that there are twenty-one subgroups of $S\O(4)$ yielding nonfibering orbifolds.

\section{Almost maximal symmetry rank: dimension \texorpdfstring{$4$}{4}}\label{s:amsr4}

We proceed to consider circle actions on positively curved Alexandrov spaces of dimension four.
As before, we  always assume that the space has no boundary.  We  provide the classification for orientable spaces only: the non-orientable spaces are quotients by an involution which commutes with the circle action by Theorem A of ~\cite{HS}. 

We recall first the manifold classification.

\begin{theorem}\label{p:amsrmanifold4}\cite{HK, GS1, GW2} Let $T^1$ act isometrically and effectively on $M^4$, a $4$-dimensional, closed, positively curved Riemannian manifold.
Then
$M$ is equivariantly diffeomorphic to $\sph^4$, $\rp^4$ or $\cp^2$ with a linear action.
\end{theorem}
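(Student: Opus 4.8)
The plan is to assemble this statement from the three cited works rather than to reprove it from scratch, first reducing to the orientable, simply connected case. If $M$ is non-orientable it carries a canonical orientation double cover $\tilde{M}$, positively curved, on which the connected group $T^1$ lifts uniquely to an isometric action commuting with the deck involution $\iota$. By the Generalized Synge Theorem \ref{GST}(1), $\pi_1(M) = \zzz_2$, so $\tilde{M}$ is simply connected, and it suffices to identify $\tilde{M}$ equivariantly and descend. Since $\iota$ acts freely, the orientable cover must be $\sph^4$ rather than $\cp^2$ — the latter has odd Euler characteristic $\chi(\cp^2) = 3$ and so admits no free involution — which is precisely why only $\rp^4 = \sph^4/\iota$, and not a quotient of $\cp^2$, appears in the list.

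For orientable $M$, the topological backbone is the Hsiang--Kleiner~\cite{HK} analysis of the orbit space $\bar{M} = M/T^1$, a $3$-dimensional positively curved Alexandrov space. By Lemma \ref{l:fixedpoint} the action has nonempty fixed-point set, and by Proposition \ref{p:evencodim} each component of $\Fix(T^1)$ has even codimension, hence is either an isolated point or a totally geodesic surface; positive curvature together with orientability of the normal bundle forces any such surface to be a $2$-sphere. The key geometric step, which the present paper abstracts into the extent machinery, is to bound the number of isolated fixed points: each such point has a space of directions in $\bar{M}$ which is a positively curved quotient of the round $\sph^3$ by a circle, and the rigidity of the round metric makes it \emph{small} in the sense of the Extent Lemma \ref{extent}, so Proposition \ref{p:eulerbound} limits their number. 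Combining the resulting count of fixed-point components with an analysis of the singular set of $\bar{M}$ and Freedman's classification~\cite{Fr} of simply connected topological $4$-manifolds yields $M \cong \sph^4$ or $\cp^2$.

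To upgrade the homeomorphism to an equivariant diffeomorphism with a linear action, I would invoke Grove--Searle~\cite{GS1} in the case that $\Fix(T^1)$ contains a surface: the action is then fixed-point homogeneous, and the soul-orbit description (the smooth predecessor of Theorem \ref{t:fph}) identifies $M$ equivariantly with a linear model. The complementary case, in which the fixed-point set is a finite collection of isolated points, is settled by Grove--Wilking~\cite{GW2}, whose recognition and cross-section arguments force the action to be equivalent to a standard linear action on $\sph^4$ or $\cp^2$.

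The main obstacle is the geometric heart of the Hsiang--Kleiner step: simultaneously controlling the number of isolated fixed points and the isotopy type (the knotting and linking data) of the singular set in the $3$-dimensional orbit space, since exactly this information feeds the recognition of the underlying manifold. In the smooth setting the rigidity of the spaces of directions makes this tractable, and it is precisely the loss of that rigidity in the Alexandrov category — where a space of directions may be any positively curved space — that necessitates Condition \q{} and the extent-based arguments developed in the body of this paper.
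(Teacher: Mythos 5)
Your proposal is correct and takes essentially the same approach as the paper, whose entire proof of this theorem is the attribution you reconstruct: Hsiang--Kleiner~\cite{HK} for the topological classification in the simply connected case, and Grove--Searle~\cite{GS1} together with Grove--Wilking~\cite{GW2} for the upgrade to equivariant diffeomorphism with a linear action, split exactly as you split them between the fixed-point-homogeneous and isolated-fixed-point cases. Your reduction of the non-orientable case to the simply connected one via the orientation double cover and the Euler-characteristic obstruction $\chi(\cp^2)=3$ is a correct and slightly more detailed account of what the paper dispatches with a one-line remark (that non-orientable spaces are quotients by an involution commuting with the circle action).
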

As mentioned in the Introduction, the proof  for simply connected $4$-manifolds is given up to equivariant homeomorphism by~\cite{HK} and up to equivariant diffeomorphism by~\cite{GS1} and~\cite{GW2}. 
 Note that while all these actions extend to actions by homeomorphisms of $T^2$ (see~\cite{GW2}), in contrast, in dimension three, $\sph^3/\Gamma$ does not admit a $T^2$ action when $\Gamma$ is not a cyclic group by work of Mostert~\cite{M} and Neumann~\cite{ N}. Thus, we see that there are some spaces in Theorem \ref{t:amsr} which do not admit a $T^2$ action.

We  now outline the  proof  our main result, Theorem \ref{t:amsr}. 
By Proposition \ref{p:orbitspace}, either (1) the action is fixed-point homogeneous; or (2) the fixed-point set is comprised of two or three isolated points.

Case (1) is covered by Proposition \ref{p:fphcase}. In Case (2), Theorem \ref{t:3points} covers actions with three fixed points, while those with two fixed points are addressed by Propositions \ref{p:4suspension} and \ref{p:loopandspur}.

\subsection{The structure of the orbit space} 
We begin this subsection by demonstrating that
when a circle acts on a $4$-dimensional Alexandrov space
the structure of the orbit space
  is that of a stratified $3$-manifold with the singular strata consisting of the non-principal orbits.

Recall by Lemma \ref{l:fixedpoint}, that when the $4$-space is positively curved the fixed-point set of the circle action is non-empty.  We describe this fixed-point set and investigate the structure of the orbit space when there are only isolated fixed points.
We  show in Lemma \ref{l:s3orbit} that in this case the quotient space is homeomorphic to $\sph^3$ and there are no closed curves corresponding entirely to points of finite isotropy. Lemma \ref{l:atleasttwo} combined with Proposition \ref{p:eulerbound} gives us that the number of fixed points must be two or three.

We fix the following notation. Let $\pi \colon X^4 \rightarrow \bar{X^3}=X^4/T^1$. Let $F$ denote the set of fixed points and let $E$ denote the set of points of finite isotropy in $X^4$. Their images in $\bar{X}^3$ are denoted by $\bar{F}$ and $\bar{E}$ respectively.

\begin{lemma}\label{l:graph}
Let $T^1$ act isometrically and effectively on $X^4$, a $4$-dimensional, closed, orientable Alexandrov space. Then $\bar{X}^3$ is a topological $3$-manifold and the decomposition of $\bar{X}$ by orbit type gives a stratification into manifolds.  Moreover, we can describe the structure of the stratification as follows:
\begin{enumerate}
\item $\bar{F}$ is the union of the (possibly empty) boundary of $\bar{X}^3$ with a (possibly empty) set of isolated points in the interior of $\bar{X}^3$; and
\item $\bar{E}$ is the union of a (possibly empty) set of curves in the interior of $\bar{X}^3$. These curves are locally flat submanifolds of $\bar{X}^3$ and have endpoints in $\bar{F}$ or are simple closed curves. No more than three curves in $\bar{E}$ can intersect at a point of $\bar{F}$.
\end{enumerate}
In other words, the strata are $\bar{X}_0 = \bar{F} \setminus \partial \bar{X}$, $\bar{X}_1 = \bar{E}$, $\bar{X}_2 = \partial \bar{X} \setminus \cl(\bar{E})$ and $\bar{X}_3 = \bar{X} \setminus (\bar{F}\cup\bar{E})$.
\end{lemma}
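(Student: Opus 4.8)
\textbf{The plan is to} analyze the local structure of the orbit projection $\pi \colon X^4 \to \bar{X}^3$ by invoking the Slice Theorem \ref{l:slice} at each point and classifying the possible isotropy actions. The key observation is that at a point $p \in X^4$, the space of directions decomposes as the spherical join of the orbital directions and the normal directions $\nu_p$, and since $\Sigma_p$ is a positively curved $3$-space (when $p$ is fixed) or a join involving a circle (otherwise), Proposition \ref{p:GGG} severely restricts the possibilities. First I would stratify $X^4$ by isotropy type; since the circle is abelian, the possible isotropy groups are $T^1$ (the fixed set $F$), finite cyclic groups $\zzz_k$ for $k > 1$ (the set $E$), and the trivial group (the principal orbits). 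By Proposition \ref{p:evencodim}, the components of $F$ have even codimension in $X^4$, hence codimension $2$ or $4$, so $\bar{F}$ consists of $2$-dimensional pieces (which will form the boundary of $\bar{X}$) and isolated points.

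Next I would establish that $\bar{X}^3$ is a topological $3$-manifold. For a principal point, the slice is a free $T^1$-ball, so the quotient is locally $\rrr^3$. For a point of finite isotropy $\zzz_k$ acting on $\nu_p$, I would show using the Slice Theorem that the local quotient $K\nu_p / (T^1/\zzz_k)$ is again a $3$-ball: the normal directions form a circle on which $\zzz_k$ acts, and the quotient of the slice by the residual circle is a disk, so $\bar{E}$ appears as the core of a locally flat $1$-stratum. For the fixed points, the local model is the cone on $\Sigma_p X / T^1$ where $\Sigma_p$ is a positively curved $3$-space carrying a fixed-point-free circle action (for isolated fixed points); by Proposition \ref{p:3space} this is $\sph^3/\Gamma$, and its quotient by $T^1$ is $\sph^2$ or $\disk^2$, so the cone is again locally Euclidean, giving an interior point of $\bar{F}$. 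The locally flat submanifold claim for $\bar{E}$ follows from Kapovitch's Relative Stability Theorem applied to the extremal set $\cl(\bar{X}_{(H)})$, as discussed in the excerpt.

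The structural claims in parts (1) and (2) then require matching these local models into the global stratification. The statement that $\bar{E}$-curves have endpoints in $\bar{F}$ or are closed follows because a $1$-dimensional primitive extremal set is a curve terminating in $0$-dimensional extremal sets, which here must be fixed points where the finite isotropy degenerates to all of $T^1$. \textbf{The main obstacle} I expect is verifying the bound that no more than three curves of $\bar{E}$ meet at a point of $\bar{F}$. This is essentially a statement about the isotropy action on $\Sigma_p X \cong \sph^3/\Gamma$: the components of finite isotropy in $\Sigma_p$ correspond to exceptional orbits of the induced circle action, i.e.\ the singular fibers of the Seifert fibration on $\sph^3/\Gamma$. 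I would argue that a fixed-point-free circle action on a spherical $3$-manifold $\sph^3/\Gamma$ has at most three exceptional orbits, which is a classical fact about Seifert fibrations over $\sph^2$ with positively curved (hence spherical) total space, since these correspond to triples of the orbifold $\sph^2(\alpha_1,\alpha_2,\alpha_3)$. Each such exceptional orbit gives rise to one curve of $\bar{E}$ emanating from $\bar{p}$, yielding the bound of three. Assembling the identification of the four strata $\bar{X}_0,\bar{X}_1,\bar{X}_2,\bar{X}_3$ as stated is then a matter of collecting these local descriptions.
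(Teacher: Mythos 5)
Your overall strategy (Slice Theorem plus a classification of the local isotropy representations) is the same as the paper's, but there is a genuine gap at the central step: you never use the orientability of $X^4$, and orientability is precisely what makes the lemma true. The paper's proof of the manifold claim runs as follows: orientable Alexandrov spaces of dimension at most three are topological manifolds, and spaces of directions of an orientable space are orientable; hence the space of directions normal to every orbit is an orientable manifold, the isotropy action on it is orientation-preserving, and so the space of directions of $\bar{X}$ at any point --- which is exactly this normal space modulo the isotropy action --- is an orientable $2$-dimensional Alexandrov space with $\curv \geq 1$, i.e.\ $\sph^2$ or $\disk^2$. Nothing in your argument rules out $\nu_p \cong \rp^2$ at a point with finite (or trivial) isotropy: for instance, $X^4 = \Susp\left(\rp^2\right) \times \sph^1$ with $T^1$ rotating the circle factor is a closed (non-orientable) Alexandrov space on which the action is even free, yet $\bar{X} \cong \Susp\left(\rp^2\right)$ is not a manifold. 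Likewise nothing rules out $\zzz_k$ acting on $\nu_p \cong \sph^2$ by an orientation-reversing homeomorphism, which would create boundary points of $\bar{X}$ with finite isotropy, contradicting part (1). For the same reason, your assertion at fixed points that $\left(\sph^3/\Gamma\right)/T^1$ is $\sph^2$ or $\disk^2$ is not automatic for arbitrary $\Gamma \subgp \O(4)$; one needs $\Sigma_p$ orientable.

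There is also a dimensional error in your local model at finite-isotropy points. If $T^1_p = \zzz_k$ then the orbit $T^1(p)$ is a circle, so the normal space of directions $\nu_p$ is $2$-dimensional; it is not ``a circle on which $\zzz_k$ acts,'' and $\Sigma_p$ is the suspension ($\sph^0$-join) of $\nu_p$, not a join with a circle. Moreover, the local orbit space is $K\nu_p / \zzz_k$, the slice modulo the \emph{finite} isotropy group --- a $3$-dimensional cone --- not ``the quotient of the slice by the residual circle,'' which would be $2$-dimensional. The paper's argument at these points is that $\zzz_k$ acts on $\nu_p \cong \sph^2$ effectively and orientation-preservingly, hence as a rotation with exactly two fixed points, so that $\Sigma_{\bar{p}}\bar{X} \cong \sph^2$ and $\bar{E}$ is a locally flat $1$-manifold (no appeal to extremal sets is needed here, though your route through the Relative Stability Theorem is legitimate once the manifold property of $\bar{X}$ is in hand). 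Your remaining steps --- endpoints of $\bar{E}$ lying in $\bar{F}$ by continuity of isotropy, and the degree bound of three coming from the fact that a circle action on a positively curved $3$-space has at most three components of finite isotropy --- do match the paper.
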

\begin{proof}

In order to show that $\bar{X}^3$ is a topological manifold, it suffices to show that at every point the space of directions is either $\sph^2$ or $\disk^2$.  

Orientable Alexandrov spaces of dimension three or less are topological manifolds (see
Exercise 10.10.4 Part (2) of~\cite{BBI}).
Thus, the space of directions normal to any orbit in $X^4$ is an orientable, positively curved manifold. Additionally, the isotropy action on it is orientation-preserving and so the quotient is an orientable, positively curved $2$-space, that is, $\sph^2$ or $\disk^2$. But the space of directions at a point in $\bar{X}^3$ is precisely this quotient, proving the claim.

Note further that the only time $\disk^2$ arises as a space of directions is when the isotropy group is $T^1$. This shows that $\partial \bar{X}^3$ is a subset of $\bar{F}$.

	At a fixed point $p\in X^4$, the isotropy group $T^1$ acts on the $3$-dimensional space of directions $\Sigma_p^3$.  
		According to Proposition \ref{p:3space}, the action is the quotient of a linear action on a sphere. The fixed-point set of the isotropy action is therefore empty, or a circle. Therefore each component of $\bar{F}$ has corresponding dimension $0$ or $2$.  The components of dimension $0$ are isolated fixed points, whose spaces of directions are homeomorphic to $\sph^2$ and thus are interior points. The components of dimension $2$ make up the boundary of $\bar{X}^3$, thus proving Part ($1$).

To prove Part ($2$), consider a point $p$ such that $T^1_p = \zzz_k$. The orbit $T^1(p)$ is a circle, and so the normal space is homeomorphic to $\sph^2$.
The effective, isotropy action of $\zzz_k$ is orientation-preserving, so the action is by a rotation, fixing two points.
Hence,  $\Sigma_{\bar{p}}\bar{X} \cong \sph^2$ and it has two points which make up $\Sigma_{\bar{p}}\bar{E}$, so that $\bar{E}$ is a locally flat $1$-dimensional manifold without boundary, with isotropy constant on connected components.

Now we consider the endpoints of the connected components of $\bar{E}$ which are open arcs. This is the set $\cl(\bar{E}) \setminus \bar{E}$. Since points in $\cl(\bar{E}) \setminus \bar{E}$ are, by continuity, fixed by a cyclic subgroup but are not themselves in $\bar{E}$, they must be fixed by the entire circle, so $\partial \bar{E} \subset \bar{F}$.

 Since a circle action on a positively curved $3$-space can have no more than three components of finite isotropy, the bound on the number of intersecting curves is obtained.
\end{proof}

\begin{remark}In the case where $X^4$ is a manifold, $\Sigma^3_p \cong \sph^3$ for every point $p$. If $p$ is a fixed point, then the isotropy action of $T^1$ on $\Sigma^3_p$ cannot have both fixed points and points of finite isotropy, which implies that $\bar{F} \cap \bar{E}$ is comprised only of isolated points of $\bar{F}$.
	
However, in general $\Sigma^3_p$ could have the type of any spherical $3$-manifold. In case $\Sigma^3_p$ is a lens space, the isotropy action can have both fixed points and a circle orbit of finite isotropy. In this case, arcs of $\bar{E}$ can terminate in the boundary of $\bar{X}^3$.
\end{remark}

The case where $\partial \bar{X} \neq \emptyset$ corresponds to fixed-point-homogeneous actions, and these are classified in Section \ref{s:fph}. For the remainder of this subsection, we restrict ourselves to the case where $\partial \bar{X} = \emptyset$, so that the fixed-point set is discrete. In this case, the strata are simply $\bar{X}_0 = \bar{F}$, $\bar{X}_1 = \bar{E}$ and $\bar{X}_3 = \bar{X} \setminus (\bar{F}\cup\bar{E})$.

By ruling out the possibility that $\bar{E}$ could contain simple closed curves, the next Lemma shows that, assuming positive curvature, the singular strata form an embedded multigraph in $\sph^3$ having maximal degree at most three. Recall that a {\em multigraph} is  a graph where two vertices may be joined by multiple edges, or where an edge may join a vertex to itself to form a  loop. 

\begin{lemma}\label{l:s3orbit}
Let $T^1$ act isometrically and effectively on $X^4$, a $4$-dimensional, positively curved, closed, orientable Alexandrov space, with only isolated fixed points. 
Then $\bar{X}^3$ is homeomorphic to $\sph^3$ and $\bar{E}$ contains no simple closed curves.
\end{lemma}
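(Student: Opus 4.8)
The plan is to treat the two assertions separately: first pin down the homeomorphism type of $\bar{X}^3$, then rule out closed curves in $\bar E$.

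For the homeomorphism type, I would start from Lemma \ref{l:graph}. Since the fixed points are isolated we have $\partial \bar{X} = \emptyset$, so $\bar{X}^3$ is a \emph{closed} topological $3$-manifold in which every point has space of directions $\sph^2$ (there are no $\rp^2$ points). As $\bar{X}$ is a positively curved Alexandrov $3$-space, Proposition \ref{p:GGG} applies; because $\bar{X}$ is a genuine manifold it cannot be the suspension of $\rp^2$, so $\bar{X} \cong \sph^3/\Gamma$ for a finite $\Gamma \subgp \O(4)$ acting freely, whence $\pi_1(\bar{X}) \cong \Gamma$. To force $\Gamma$ trivial I would use that $X^4$ is simply connected, which follows from the Generalized Synge Theorem \ref{GST} since $X$ is even-dimensional and orientable. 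Finally, the orbit projection $\pi \colon X \to \bar{X}$ induces a \emph{surjection} $\pi_* \colon \pi_1(X) \to \pi_1(\bar{X})$: this is the standard fact that the quotient by a connected group is $\pi_1$-surjective, proved here by using the Slice Theorem \ref{l:slice} to lift any loop in $\bar{X}$ to a path in $X$ whose endpoints lie in one connected orbit, and then closing it up inside that orbit. Hence $\pi_1(\bar{X}) = 1$, so $\Gamma = 1$ and $\bar{X} \cong \sph^3$.

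For the second assertion, suppose for contradiction that $\bar{c} \subset \bar{E}$ is a simple closed curve. Then along $\bar{c}$ the isotropy is a fixed finite cyclic group $\zzz_k$ with $k \geq 2$, and by hypothesis $\bar{c}$ is an entire connected component of the graph $\bar{E} \cup \bar{F}$, meeting no fixed point. The key is to examine $\Fix(\zzz_k, X)$. Let $P$ be the connected component of $\Fix(\zzz_k, X)$ containing the torus $\pi^{-1}(\bar{c})$. As the fixed-point set of an isometric action, $P$ is a closed positively curved Alexandrov space; being connected it is pure-dimensional, and since near $\pi^{-1}(\bar{c})$ the $\zzz_k$-action on the normal sphere $\sph^2$ fixes exactly two points, $P$ is $2$-dimensional. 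Because $\zzz_k$ is central in the abelian group $T^1$, the circle preserves $P$ and acts on it through $T^1/\zzz_k \cong T^1$.

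Now I would invoke Lemma \ref{l:fixedpoint}: the circle $T^1/\zzz_k$ acting on the closed, even-dimensional, positively curved Alexandrov space $P$ has a fixed point $x_0$. Since $x_0$ is fixed by both $\zzz_k$ and $T^1/\zzz_k$, it is fixed by all of $T^1$, so $x_0 \in F$ and $\bar{x}_0 \in \bar{F}$. But $\bar{x}_0$ lies in the image $\bar{P} = P/(T^1/\zzz_k)$, which is connected, contained in $\bar{E} \cup \bar{F}$, and also contains $\bar{c}$. Thus the connected subgraph $\bar{P}$ contains both the loop $\bar{c}$ and a fixed point $\bar{x}_0 \notin \bar{c}$, contradicting the assumption that $\bar{c}$ is a full connected component of $\bar{E} \cup \bar{F}$ disjoint from $\bar{F}$. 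Hence $\bar{E}$ contains no simple closed curve. The main obstacle is identifying, for the second part, the right object to contradict: the crucial move is that an exceptional circle forces a $2$-dimensional fixed-point set of the finite isotropy group, to which the residual circle symmetry and Lemma \ref{l:fixedpoint} can be applied. Verifying that this component is genuinely $2$-dimensional (hence even-dimensional) and connected is the step requiring the most care.
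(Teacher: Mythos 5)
Your proposal is correct, and its second half takes a genuinely different route from the paper's. For the homeomorphism $\bar{X}^3 \cong \sph^3$ your argument is essentially the paper's: both rest on the Generalized Synge Theorem \ref{GST} plus $\pi_1$-surjectivity of the orbit projection (the paper cites Corollary II.6.3 of Bredon~\cite{Br} for exactly the lifting argument you sketch), the only cosmetic difference being that you pass through Proposition \ref{p:GGG} and spherical space forms where the paper invokes the Poincar\'e Conjecture directly. For the absence of closed curves in $\bar{E}$, however, the paper argues homologically, following Lemma 2.3 of Montgomery--Yang~\cite{MY1}: an oriented Seifert surface $\bar{Q}$ for $\bar{c}$ has preimage $Q$ whose fundamental relative class has boundary divisible by $k$ on $c$, producing $2$-torsion in $H_3\left(Q, Q\cap E;\zzz\right)$ and contradicting orientability. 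You instead argue geometrically: the torus $\pi^{-1}(\bar{c})$ forces a $2$-dimensional component $P$ of $\Fix(\zzz_k, X)$, the residual circle $T^1/\zzz_k$ acts on $P$, and Lemma \ref{l:fixedpoint} produces a $T^1$-fixed point lying in the same connected subset of $\bar{E}\cup\bar{F}$ as $\bar{c}$, contradicting the graph structure from Lemma \ref{l:graph}. Each approach buys something: the paper's argument is purely topological, needing positive curvature only through $\bar{X}\cong\sph^3$ and no information about fixed-point sets of the finite isotropy, while yours stays inside the equivariant toolkit the paper has already assembled and makes positive curvature do the work directly. Two caveats on your version: (i) it needs the fact that a component of the fixed-point set of an isometric action on a positively curved Alexandrov space is itself a compact, positively curved Alexandrov space (hence pure-dimensional); this is standard and available in the authors' earlier work~\cite{HS}, but it is nowhere stated in the present paper, so it must be cited explicitly rather than asserted; (ii) once you know $P$ is a closed, positively curved, $2$-dimensional Alexandrov space, a faster finish is available: $P$ is then homeomorphic to $\sph^2$ or $\rp^2$, and neither can contain the closed $2$-manifold $\pi^{-1}(\bar{c})$, so the appeal to Lemma \ref{l:fixedpoint} and the component bookkeeping for $\bar{E}\cup\bar{F}$ could be dispensed with entirely.
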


\begin{proof}
By Lemma \ref{l:graph}, $\bar{X}^3$ is a closed manifold. By the Generalized Synge Theorem \ref{GST}, $X^4$ is simply connected and it follows by Corollary II.6.3 of~\cite{Br}, that $\bar{X}^3$ is simply connected. 
By the resolution of the Poincar\'{e} Conjecture it is homeomorphic to $\sph^3$.  

Lemma 2.3 of Montgomery and Yang~\cite{MY1} then guarantees that there are no simple closed curves in $\bar{E}$. Since this Lemma is not given in precisely the correct context, and the notation there conflicts confusingly with ours, we repeat the argument here. Suppose $\bar{c} \subset \bar{E}$ is a closed curve, corresponding to isotropy $\zzz_k$. Let $\bar{Q} \subset \bar{X}$ be an oriented surface bounded by $\bar{c}$, and assume, without loss of generality, that $\bar{Q} \cap \bar{F} = \emptyset$ and $\left( \bar{Q} \,\setminus\, \bar{c} \right) \cap \bar{E}$ is finite. 

Then $Q \,\setminus \,c$, after a closed submanifold of dimension $1$ corresponding to finitely many exceptional orbits is removed, is an orientable $3$-manifold. 
Let $z$ generate $H_3 \left( Q, Q \cap E ; \zzz \right) \cong \zzz$. Then $\partial z \in H_2 (c; \zzz)$. We can see that $\frac{1}{k} \partial z$ is also an integral cycle on $c$, but it does not bound on $Q$, since $z$ is not divisible by $k$.  This shows that $Q$ has $2$-torsion, which is a contradiction.
\end{proof}

The following lemma provides a lower bound of two on the number of isolated fixed points.	In the Riemannian case, the simplest method of proof for Lemma \ref{l:atleasttwo} is to use the fact that the Euler characteristic of a Riemannian manifold or orbifold, which in the simply connected case in dimension $4$ is at least two, is given by the Euler characteristic of the fixed-point set of any isometric circle action. The proof of this (see Kobayashi~\cite{Ko}) relies on the Lefschetz Fixed-Point Theorem, and so ultimately on the triangulability of Riemannian spaces. 
	
		The question of whether a general Alexandrov space is triangulable remains open. Since $\sph^3$ is triangulable, it is probably possible to lift a triangulation of $\sph^3$ to $X^4$, but we do not investigate this question here.

\begin{lemma}\label{l:atleasttwo}
	Let $T^1$ act isometrically and effectively on $X^4$, a $4$-dimensional, positively curved, closed, orientable Alexandrov space, with only isolated fixed points. 
	Then there are at least two fixed points.
\end{lemma}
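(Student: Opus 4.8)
The plan is to show that $X^4$ must have at least two fixed points by assuming it has exactly one and deriving a contradiction from the topology of the orbit space. By Lemma \ref{l:fixedpoint}, the fixed-point set is non-empty, so the only case to exclude is that of a single isolated fixed point $p$. First I would pass to the orbit space $\bar{X}^3$, which by Lemma \ref{l:s3orbit} is homeomorphic to $\sph^3$, with the single fixed point giving an isolated point $\bar{p} \in \bar{F}$ and the set $\bar{E}$ of finite isotropy forming a multigraph. By Lemma \ref{l:graph}, every arc of $\bar{E}$ must terminate at a point of $\bar{F}$ or be a simple closed curve, and Lemma \ref{l:s3orbit} rules out simple closed curves. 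With only one fixed point available, any component of $\bar{E}$ would have to have both of its endpoints at the single point $\bar{p}$, forming a loop.

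The key step is then to analyze what happens near the unique fixed point $p$. Its space of directions $\Sigma_p^3$ is a positively curved $3$-space on which $T^1$ acts, and by Proposition \ref{p:3space} this action is a linear action on $\sph^3/\Gamma$. The components of $\bar{E}$ emanating from $\bar{p}$ correspond to the points of finite isotropy of this isotropy action on $\Sigma_p$, which project to the singular points in $\Sigma_p/T^1$. Because $\bar{X} \cong \sph^3$ and $\bar p$ is the \emph{only} vertex of the graph $\bar E$, removing a small ball around $\bar p$ leaves $\sph^3$ minus a ball, i.e. a ball $\disk^3$, whose boundary is $\Sigma_{\bar p} \cong \sph^2$; every arc of $\bar E$ must re-enter this ball and return to $\bar p$, so each arc of $\bar E$ meets $\Sigma_{\bar p}\bar X \cong \sph^2$ in an even number of points.

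The contradiction I would extract is homological, in the spirit of Lemma \ref{l:s3orbit}. A single loop of $\bar E$ based at $\bar p$, together with the point $\bar p$, forms a closed curve $\bar c$ in $\sph^3$. I would bound $\bar c$ by an oriented surface $\bar Q$ meeting $\bar F \cup \bar E$ in a controlled way, lift to the manifold part of $X^4$, and run the same divisibility argument used in Lemma \ref{l:s3orbit}: the preimage carries a class $z$ whose boundary $\partial z$ admits a non-integral multiple $\tfrac{1}{k}\partial z$ that is an integral cycle not bounding on $Q$, producing $k$-torsion in a space that should have none. Alternatively, and perhaps more cleanly, if $\bar E$ is \emph{empty} then $X^4$ is a positively curved Alexandrov space with a single fixed point and all other orbits principal, forcing $\bar X \setminus \{\bar p\} \cong \sph^3 \setminus \{\mathrm{pt}\}$ to be the base of a free circle action and $X^4 \setminus \{p\}$ to be a free $T^1$-space; one can then compute that $X^4$ would be the suspension-like cone with only one cone point, which is incompatible with $\bar X$ being a closed $\sph^3$ rather than $\disk^3$.

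The main obstacle I expect is handling the two sub-cases uniformly, namely whether $\bar E$ is empty or contains loops at $\bar p$, and making the homological torsion argument rigorous while working only in the manifold portion of $X^4$ as the paper insists (see the remarks on intersection numbers and on triangulability preceding Lemma \ref{l:atleasttwo}). In particular, ensuring that the chosen spanning surface $\bar Q$ can be taken to avoid $\bar F$ except possibly at $\bar p$, and that the orientation and intersection-number bookkeeping survives passage through the single singular point, will require care; the cleanest route is likely to invoke an Euler-characteristic or Lefschetz-type count on $\bar X \cong \sph^3$ that shows a lone vertex is combinatorially impossible for the graph $\bar E$, but since the paper explicitly avoids relying on triangulability of $X^4$, I would instead lean on the Synge/Poincar\'e input already established together with the explicit linear model for $\Sigma_p$ from Proposition \ref{p:3space} to force a second fixed point.
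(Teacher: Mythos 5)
Your setup coincides with the paper's (reduce to a single fixed point $p$, note the orbit space is $\sph^3$, and observe that every component of $\bar{E}$ must be a loop based at $\bar{p}$), but your central step fails. The Montgomery--Yang divisibility argument of Lemma \ref{l:s3orbit} requires the closed curve to consist \emph{entirely} of orbits with the same finite isotropy $\zzz_k$: it is exactly this constancy that makes the preimage $c$ a torus and makes $\tfrac{1}{k}\partial z$ an integral cycle on it. A loop of $\bar{E}$ together with the fixed point $\bar{p}$ is not such a curve; its preimage is a pinched torus (the circle orbits collapse to the point $p$), and the divisibility claim breaks down there. Worse, the argument you propose is purely local to the spanning surface $\bar{Q}$ and its preimage, so if it were valid it would rule out \emph{any} loop of finite isotropy based at a fixed point, irrespective of what the rest of the space looks like --- but precisely such loops occur in genuine positively curved spaces: the loop-and-spur configurations of Proposition \ref{p:loopandspur}, realized explicitly by the quotients $\cp^2_{2n,-1,-1}/\bar{\iota}$ constructed in the example following it. Hence no argument of this shape can succeed; what kills the one-fixed-point configuration cannot be a homological obstruction in the orbit space alone.

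Your alternative for the case $\bar{E} = \emptyset$ also stops short of a contradiction: the union of the cone on $\Sigma_p$ with $\sph^1 \times \disk^3$ is a perfectly consistent closed topological space, so there is no incompatibility of the kind you describe with $\bar{X} \cong \sph^3$. The missing idea --- and the paper's actual mechanism in \emph{both} cases --- is to identify the homeomorphism type of $\Sigma_p$ and contradict positive curvature there. In the free case, $X^4 \setminus B_{\epsilon}(p)$ is a principal $T^1$-bundle over $\bar{X} \setminus B_{\epsilon}(\bar{p}) \cong \disk^3$ (Generalized Schoenflies), hence trivial, forcing $\Sigma_p \cong \sph^1 \times \sph^2$. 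In the finite-isotropy case, a section over $\bar{W} = \cl\left(\bar{X} \setminus \bar{U}\right)$ shows the Seifert invariants of the isotropy action on $\Sigma_p$ are $\left\lbrace 0; (\alpha,\beta), (\alpha,-\beta) \right\rbrace$, which by the Orlik--Raymond classification is again $\sph^1 \times \sph^2$. Since $\sph^1 \times \sph^2$ has infinite fundamental group, Bonnet--Myers forbids it from carrying $\curv \geq 1$, contradicting its role as a space of directions in a positively curved Alexandrov space. Without this geometric step (or some substitute for it), your proposal does not close either sub-case.
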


\begin{proof}
	By Lemma \ref{l:s3orbit}, the orbit space is $\bar{X}^3 \cong \sph^3$ and by Lemma \ref{l:fixedpoint}, there is at least one fixed point. We  assume that there is exactly one fixed point, $p$, to derive a contradiction. There are two cases to consider: Case (1), where there is finite isotropy, and Case (2), where there is none. In both cases, we  show that the space of directions at $p$, $\Sigma_p$, is homeomorphic to  $\sph^1 \times \sph^2$. Since $\sph^1 \times \sph^2$ has infinite fundamental group, by the Bonnet--Myers theorem it does not admit positive curvature, yielding the contradiction.
	
	We consider Case (1), where  the action has points of finite isotropy. Then by Lemmas \ref{l:graph} and \ref{l:s3orbit} the singular strata are given by a multigraph. It has only one vertex, $\bar{p}$, and the vertex has degree at most three. It follows that the multigraph can contain only one loop. This unique closed curve is denoted by $\bar{c}$.

	Then, following the notation of Section \ref{s:invariants}, let $\bar{U} \cong \sph^1 \times \disk^2$ be the closure of a neighborhood of $\bar{c}$ and let $\bar{W} = \cl \left( \bar{X} \setminus \bar{U} \right)$.
	Using the Mayer--Vietoris sequence applied to $\bar{X}=\bar{U}\cup \bar{W}$, we see that $\bar{W}$ has the integral homology of a circle. Since $\bar{c} = \bar{E} \cup \bar{F}$, the action on $W$ is free and so a section $\partial \bar{U} \to \partial U$ can be specified.
	The Seifert invariants of the isotropy action on $\Sigma_p$ with respect to this section are then $\left\lbrace 0 ; (\alpha_i, \beta_i) , (\alpha_i, -\beta_i) \right\rbrace$, so that $\Sigma_p \cong \sph^1 \times \sph^2$ and we obtain the desired contradiction.
	
	We now consider Case (2), where $T^1$ acts freely on the complement of $p$. Let $\epsilon > 0$ be so small that $B_{\epsilon}(p)$ is homeomorphic to the cone on $\Sigma_p X$ and $B_{\epsilon}(\bar{p}) \cong \disk^3$. By Theorem 4.4 of~\cite{HS} and for sufficiently small $\epsilon$,   $\partial (B_{\epsilon}(\bar{p}))\cong\sph^2$ and admits a collared neighborhood. Now, since  $\bar{X}^3\cong\sph^3$ by Lemma \ref{l:s3orbit}, it follows from the Generalized Schoenflies Theorem (see, for example, Brown~\cite{Bro}), that  $\bar{X}^3\setminus B_{\epsilon}(\bar{p})\cong\disk^3$.
	
	The circle $T^1$ acts freely on the complement of $B_{\epsilon}(p)$, $U=X^4\setminus B_{\epsilon}(p)$.  
	Hence $U$ is the total space of a principal $T^1$-bundle over $\disk^3$ and is homeomorphic to $\sph^1 \times \disk^3$. Then $\Sigma_p X^4 \cong \sph^1 \times \sph^2$, and, once again, we obtain the desired contradiction.
	
	Hence there are at least two fixed points.
\end{proof}

 We  summarize the situation with the following proposition, which follows from the foregoing Lemmas, except for the upper bound, which follows from Proposition \ref{p:fixbound}.
 
\begin{proposition}[Orbit space structure]\label{p:orbitspace}
Let $T^1$ act isometrically and effectively on $X^4$, a $4$-dimensional, positively curved, closed, orientable Alexandrov space. Then either
\begin{enumerate}
\item The action is fixed-point homogeneous; or
\item The orbit space $\bar{X}^3$ is homeomorphic to $\sph^3$ and the non-principal orbits are represented by an embedded multigraph of maximal degree three, having at least two vertices. The vertices correspond to $\bar{F}$ while the edges correspond to $\bar{E}$.
\end{enumerate}
In the event that the action satisfies Condition \q{} and the fixed-point set is discrete, there are at most three fixed points.
\end{proposition}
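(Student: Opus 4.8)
The plan is to assemble the structural lemmas already established in this subsection, organizing the argument around a single dichotomy: whether or not the fixed-point set contains a component of dimension two. There is no new geometric input required; the substance lives in the cited lemmas, and this proposition is a synthesis that matches the two dimensional alternatives for $\bar{F}$ to the two conclusions of the statement.

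First I would note that Lemma \ref{l:fixedpoint} guarantees that the $T^1$-action on the even-dimensional, closed, positively curved space $X^4$ has a nonempty fixed-point set, so $\bar{F} \neq \emptyset$. By Proposition \ref{p:evencodim}, every component of $X^{T^1}$ has even codimension, hence dimension $0$ or $2$; correspondingly, Lemma \ref{l:graph} identifies $\bar{F}$ as the union of the boundary $\partial \bar{X}$ (the image of any $2$-dimensional components) with a discrete set of interior points (the isolated fixed points). The dichotomy is then precisely $\partial \bar{X} \neq \emptyset$ versus $\partial \bar{X} = \emptyset$.

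If $\partial \bar{X} \neq \emptyset$, then $X^{T^1}$ contains a component of dimension $2$, so $\dim(X^{T^1}) = 2 = \dim(\bar{X}) - 1$ and the action is fixed-point homogeneous by definition; this is alternative (1). Otherwise $\partial \bar{X} = \emptyset$, so the fixed-point set is discrete, and I would invoke Lemma \ref{l:s3orbit} to conclude that $\bar{X}^3 \cong \sph^3$ and that $\bar{E}$ contains no simple closed curves. Combined with the structure supplied by Lemma \ref{l:graph} — namely that $\bar{E}$ is a union of locally flat curves whose endpoints lie in $\bar{F}$, with at most three meeting at any point of $\bar{F}$ — this exhibits $\bar{F} \cup \bar{E}$ as an embedded multigraph of maximal degree three, with vertex set $\bar{F}$ and edge set $\bar{E}$. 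Lemma \ref{l:atleasttwo} then provides the lower bound of two vertices, completing alternative (2).

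For the final assertion, I would assume the action satisfies Condition \q{} and that the fixed-point set is discrete. By design, Condition \q{} forces each isolated fixed point to have a small space of directions in the orbit space, so Proposition \ref{p:fixbound} bounds the number of such points by three. The only genuinely delicate steps underlying the whole statement are the exclusion of closed singular curves in Lemma \ref{l:s3orbit} and the two-fixed-point bound in Lemma \ref{l:atleasttwo}; since both are already available, I expect the main care in writing this proof to be purely organizational — ensuring that the equivalence between $\partial \bar{X} \neq \emptyset$ and fixed-point homogeneity is stated cleanly so that the two cases are genuinely exhaustive and do not overlap.
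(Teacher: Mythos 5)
Your proposal is correct and follows essentially the same route as the paper, whose proof is simply the statement that the result follows from the foregoing lemmas (Lemmas \ref{l:graph}, \ref{l:s3orbit}, \ref{l:atleasttwo}) together with Proposition \ref{p:fixbound} for the upper bound. Your write-up merely makes explicit the dichotomy $\partial \bar{X} \neq \emptyset$ versus $\partial \bar{X} = \emptyset$ that the paper uses implicitly (it notes just before Lemma \ref{l:s3orbit} that the boundary case is exactly the fixed-point-homogeneous one), so the two arguments coincide.
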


\subsection{The fixed-point-homogeneous case}\label{s:fph}

In this subsection we classify the fixed-point-homogeneous circle actions on positively curved Alexandrov 4-spaces.

\begin{proposition}\label{p:fphcase} Let $T^1$ act isometrically and effectively on $X^4$,  a positively curved, compact, $4$-dimensional Alexandrov space, in a fixed-point-homogeneous manner. Then,  
 $X^4$ is equivariantly homeomorphic to either the spherical suspension of $\sph^3/\zzz_k$ or to a finite quotient of a weighted complex projective space with a  linear $T^1$ action.
\end{proposition}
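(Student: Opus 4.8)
The plan is to apply the structure theorem for fixed-point-homogeneous actions, Theorem \ref{t:fph}, and then to split into cases according to the type of the soul orbit. Since the action is fixed-point-homogeneous, the maximal-dimensional component $F$ of the fixed-point set satisfies $\dim F = \dim \bar{X} - 1 = 2$, so Theorem \ref{t:fph} applies and yields a unique soul orbit $T^1(p) = T^1/T^1_p$ at maximal distance from $F$, together with a $T^1$-equivariant homeomorphism
\[
X \cong \left( \nu * T^1 \right)/T^1_p,
\]
where $\nu$ is the space of normal directions to the soul orbit. Because $X$ is closed, $\nu$ is a closed positively curved Alexandrov space, and because $X$ is orientable (the standing assumption of this section), every space of directions, and hence $\nu$, is orientable. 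I would now distinguish the two cases according to whether $T^1_p = T^1$ (the soul orbit is a fixed point, with $\nu = \Sigma_p$ of dimension $3$) or $T^1_p = \zzz_k$ is finite (the soul orbit is a circle, with $\nu$ of dimension $2$).

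Suppose first that the soul orbit is a circle, so that $T^1_p = \zzz_k$ (possibly trivial) and $\nu$ is a closed orientable positively curved $2$-space, hence $\nu \cong \sph^2$. The group $\zzz_k$ acts on $\nu * T^1$ by the isotropy action on $\nu$ and by inverse rotation on the $T^1$-factor, acting freely on that factor. An orientation-preserving effective action of $\zzz_k$ on $\sph^2$ has a fixed point, since a free action would force $\chi(\sph^2) = 2$ to be divisible by $k$ and the case $k=2$ is excluded as the only free involution of $\sph^2$ reverses orientation; thus the action is conjugate to a rotation fixing two points. Writing $\sph^2 = \sph^0 * \sph^1$ equivariantly, with $\zzz_k$ trivial on $\sph^0$ and rotating $\sph^1$, I would reassociate the join as
\[
\nu * T^1 = \sph^0 * \sph^1 * \sph^1 = \sph^0 * \sph^3,
\]
with $\zzz_k$ acting trivially on $\sph^0$ and linearly by rotations on each circle factor of $\sph^3 = \sph^1 * \sph^1$. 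Passing to the quotient gives $X \cong \sph^0 * \left( \sph^3/\zzz_k \right) = \Susp \left( \sph^3/\zzz_k \right)$, with the residual $T^1$-action being the suspension of a linear action. This is conclusion (1).

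Suppose instead that the soul orbit is a fixed point, so $T^1_p = T^1$ and $\nu = \Sigma_p$ is a $3$-dimensional positively curved space carrying a $T^1$-action. By Proposition \ref{p:3space}, $\Sigma_p \cong \sph^3/\Gamma$ equivariantly, with $\Gamma \subgp \SO(4)$ finite and commuting with a linear $T^1$-action of some coprime weights $(a,b)$ on $\sph^3 \subgp \ccc^2$. For $\Gamma$ trivial, realizing the join as $\sph^3 * T^1 = \sph^5 \subgp \ccc^3 = \ccc^2 \times \ccc$, the quotienting circle $T^1_p$ acts with weights $(a,b,-1)$, so that $X \cong \sph^5/T^1_{a,b,-1} = \cp^2_{a,b,-1}$, while the surviving $T^1$ (trivial on $\nu$, rotating the last $\ccc$) descends to a linear circle subgroup of $T^2 = T^3/T^1_{a,b,-1}$ acting on this weighted projective space, exactly as in Section \ref{s:cp2}. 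For general $\Gamma$, since $\Gamma$ acts on the $\ccc^2$-factor commuting with the whole torus and trivially on the join coordinate, I would pull it through the quotients to write $X \cong \sph^5/\left\langle \Gamma, T^1_{a,b,-1} \right\rangle \cong \wcp/\Gamma$, a finite quotient of a weighted complex projective space still carrying the linear residual $T^1$-action. This is conclusion (2).

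The hard part will be the fixed-point case: one must justify that $\left( \Sigma_p * T^1 \right)/T^1$ really is a finite quotient of a weighted projective space with the residual circle acting linearly. This requires rewriting the join $\left(\sph^3/\Gamma\right) * T^1$ as $\sph^5/\Gamma$ compatibly with the two commuting circle actions, and then checking that the surviving $T^1$ is precisely one of the linear circle subgroups described in Section \ref{s:cp2}. The commutativity of $\Gamma$ with the torus $T^3$ acting on $\ccc^3$ is what legitimizes these manipulations, and the delicate bookkeeping is to keep the two distinct circles straight — the one we quotient by to form $X$, and the one that survives as the action on $X$.
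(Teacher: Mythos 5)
Your overall route is the same as the paper's: apply Theorem \ref{t:fph} to write $X \cong (\nu * T^1)/T^1_p$ for the soul orbit $T^1(p)$, and split according to whether $T^1_p$ is finite or all of $T^1$. Your first case (finite isotropy, $\nu \cong \sph^2$, reassociation of the join to get $\Susp(\sph^3/\zzz_k)$) is correct and in fact more detailed than the paper's one-line treatment. The genuine gap is in the fixed-point case: you never rule out that the isotropy action of $T^1$ on $\nu = \Sigma_p$ has fixed points, i.e.\ that one of the weights $a,b$ vanishes. If, say, $a = 0$, then $T^1_{0,b,-1}$ does \emph{not} act almost freely on $\sph^5$ (it fixes the circle $\left\{(z_1,0,0)\right\}$), so $\sph^5/T^1_{0,b,-1}$ is not a weighted complex projective space at all --- it is the join $\sph^1 * \left(\sph^3/T^1_{b,-1}\right)$ --- and the identification $X \cong \cp^2_{a,b,-1}$ collapses. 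This is precisely the step the paper's proof isolates as the claim that ``the space must be an orbifold'': because $T^1(p)$ was chosen at maximal distance from the unique codimension-two component $F$ of $\Fix(X,T^1)$, the point $p$ does not lie on $F$; but if $T^1$ fixed a circle of directions in $\Sigma_p$, the Slice Theorem would place $p$ on a codimension-two component of the fixed-point set, contradicting that choice. Hence the isotropy action on $\nu$ is fixed-point free, all weights are nonzero, and the quotient really is a (finite quotient of a) weighted projective space. You flagged the fixed-point case as ``the hard part,'' but identified the difficulty as the join/quotient bookkeeping rather than this almost-freeness verification, which is the one step that genuinely uses the geometry of the soul construction.

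Two smaller points. First, your assertion that $\Gamma$ ``commutes with the whole torus'' is false in general: when the circle on $\sph^3$ is the Hopf circle its centralizer in $\SO(4)$ is larger than the maximal torus, and $\Gamma$ may be binary dihedral or binary polyhedral (the paper explicitly notes such groups occur); these do not commute with the full torus. What is true --- and all that your argument needs --- is that $\Gamma$ commutes with the lifted circle itself: the lift of the $T^1$ action on $\sph^3/\Gamma$ to $\sph^3$ centralizes the deck group $\Gamma$, since conjugation gives a continuous map from a connected group to the automorphism group of the discrete group $\Gamma$. Second, that lift may be a $k$-fold covering of the acting circle, so the quotienting weights are $(a,b,-k)$ rather than $(a,b,-1)$; this is harmless for the conclusion but should be acknowledged in the bookkeeping.
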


\begin{proof}
Let $T^1(p)$ be the orbit furthest from $F^2$, the unique codimension two component of $\Fix (X, T^1)$,  Then by Theorem \ref{t:fph}, we have 
\bdm
X^4 \cong (\nu * T^1)/T^1_p,
\edm
where $T^1_p$ acts on the left on $\nu*T^1$, the action on $\nu$ being the isotropy action at $p$ and the action on $T^1$ being the inverse action on the right.
The $T^1$-action on $(\nu * T^1)/T^1_p$ is induced by the left action of $T^1$ on itself.

In the case where $T^1_p$ is finite, $\nu \cong \sph^2$, and so $X^4 \cong (\sph^2 * \sph^1) / \zzz_k \cong \Susp (\sph^3 / \zzz_k)$.

In the case where $T^1_p = T^1$, $X^4 \cong (\nu^3 * T^1)/T^1$ for some positively curved $\nu$. 
By Proposition \ref{p:3space}, $\nu^3 \cong \sph^3 / \Gamma$ for some $\Gamma \subgp \SO(4)$ and the circle action is induced by a linear one.
We claim that the space must be an orbifold. Since $\nu^3 * T^1 \cong \sph^5/\Gamma$ is certainly an orbifold, it is enough to check that $T^1_p$ does not fix points of $\nu^3$. If it did, the fixed-point set would be of codimension two in $\nu^3$, so that $p\in F^2$, contradicting the original choice of $p$. This proves the claim. These orbifolds are finite quotients of $\wcp$.
\end{proof}

\subsection{Three isolated fixed points}\label{s:3points}

In the case where there are three isolated fixed points, we determine the structure of $\bar{E}$ and $\bar{E}\cup \bar{F}$  
and then use Section \ref{s:top3points} to obtain the classification.

\begin{lemma}\label{l:3points} Let $T^1$ act isometrically and effectively  on $X^4$,  a positively curved, closed, orientable, $4$-dimensional Alexandrov space, so as to satisfy Condition \q{}. If there are three isolated fixed points, then $\bar{E} \subset \bar{X}^3$ comprises at most one curve between each pair of points, and those curves are locally flat. If $\bar{E} \cup \bar{F}$ is a closed curve, then it contains all three fixed points, and is unknotted.\end{lemma}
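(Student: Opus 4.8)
The plan is to use the results already established to reduce the singular set to a multigraph and then eliminate the degenerate configurations. By Proposition~\ref{p:orbitspace} (through Lemmas~\ref{l:graph} and~\ref{l:s3orbit}), $\bar{X}^3 \cong \sph^3$ and $\bar{E} \cup \bar{F}$ is an embedded multigraph with vertex set $\bar{F} = \{\bar{p}_1, \bar{p}_2, \bar{p}_3\}$, every vertex of degree at most three, and containing no simple closed curve disjoint from $\bar{F}$. Each component of $\bar{E}$ carries a constant finite isotropy $\zzz_k$, and by Proposition~\ref{p:3space} the isotropy action at $p_i$ is a linear Seifert fibration $\Sigma_{p_i} \cong \sph^3/\Gamma_i \to \Sigma_{\bar{p}_i} \cong \sph^2$, so each edge-end at $\bar{p}_i$ is an exceptional fiber.

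For local flatness, the interiors of the edges are locally flat by Lemma~\ref{l:graph}, and it remains to treat the vertices. The Slice Theorem~\ref{l:slice} identifies a neighborhood of $\bar{p}_i$ with the open cone on $\Sigma_{\bar{p}_i} \cong \sph^2$, that is, with $\rrr^3$; each incident edge approaches $\bar{p}_i$ along the single direction given by its exceptional fiber, and distinct edges give distinct directions. Hence the germ of $\bar{E} \cup \{\bar{p}_i\}$ is a finite union of radial rays through the origin, which is standardly embedded, so every edge is a locally flat arc.

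The heart of the matter is showing that the multigraph is a subgraph of the triangle on the three vertices, i.e.\ that there are no loops and no repeated edges. I would first dispose of the low-degree cases using the Orlik--Raymond dichotomy from Section~\ref{s:equivalence}: if $\bar{p}_i$ has degree two then $\Sigma_{p_i}$ is a Seifert manifold over $\sph^2$ with exactly two exceptional fibers, hence a lens space unless its generalized Euler number vanishes, in which case $\Sigma_{p_i} \cong \sph^2 \times \sph^1$, which admits no positive curvature. A loop at a degree-two vertex produces, exactly as in the computation of Lemma~\ref{l:atleasttwo}, two exceptional fibers of equal order and opposite Seifert invariant, so the Euler number vanishes and we reach this contradiction.

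The remaining configurations --- a loop at a degree-three vertex, and a double edge between two of the points --- are the main obstacle, since for these both spaces of directions can be genuine lens spaces, so positive curvature of the individual $\Sigma_{p_i}$ gives no contradiction and a global argument is required. Each such configuration contains a closed singular curve $\bar{c}$ meeting only one or two of the fixed points. I would first note that Condition~\q{} verifies Condition~O for $(\bar{X}, \bar{c})$ --- the relevant spaces of directions $\Sigma_{\bar{p}_i} \cong \sph^2$ are small by Condition~\qprime{}(1), their double branched covers over the two directions of $\bar{c}$ are small by Condition~\qprime{}(2), and a fixed point lying off $\bar{c}$ lifts with an unbranched, hence small, space of directions --- so Theorem~\ref{knot} shows $\bar{c}$ is unknotted and bounds a disk in $\sph^3$. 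I would then reconstruct the relevant homotopy or homology of $X^4$ from the Seifert data along $\bar{c}$, either by a Mayer--Vietoris computation of $\pi_1$ or by a Montgomery--Yang-type torsion argument on the bounding disk, and exhibit nontrivial fundamental group. This contradicts the Generalized Synge Theorem~\ref{GST}, by which the orientable, even-dimensional, positively curved $X^4$ is simply connected. With loops and repeated edges excluded, any closed curve in $\bar{E} \cup \bar{F}$ is forced to be the full triangle through all three fixed points, and the same application of Theorem~\ref{knot} yields that it is unknotted.
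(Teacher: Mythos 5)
Your reduction to a multigraph, your local-flatness argument via the Slice Theorem, and your final application of Condition O and Theorem~\ref{knot} are all fine, but both of the arguments you use to eliminate loops and repeated edges fail, and those eliminations are the heart of the lemma. First, the Euler-number argument at a degree-two loop vertex: in Lemma~\ref{l:atleasttwo} the two exceptional fibers acquire invariants $(\alpha,\beta)$, $(\alpha,-\beta)$ only because there the loop is the \emph{entire} singular set, so the action on the complement $W$ is free, the bundle over $\bar{W}$ is trivial, and the resulting global section forces the two $\beta$'s to cancel. When other fixed points are present no such section exists, and the two fibers can twist relative to one another. Indeed, the paper's own loop-and-spur spaces (Proposition~\ref{p:loopandspur} with $\alpha = 1$, i.e.\ configuration (e) of Figure~\ref{f:multigraphs}, realized explicitly by $\cp^2_{2n,-1,-1}/\bar{\iota}$ with $n=1$) are positively curved, orientable, and carry a loop at a degree-two vertex whose space of directions is a lens space with invariants $\left\lbrace 0; (k,-1), (k,1), (1,\beta) \right\rbrace$, $\beta \neq 0$, hence with nonvanishing Euler number. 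Your argument is purely local at the loop vertex and never uses the third fixed point, so it would rule out these existing spaces; it proves too much.

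Second, your plan for double edges and degree-three loops --- ``exhibit nontrivial fundamental group, contradicting Generalized Synge'' --- cannot succeed as stated. If the group you compute is $\pi_1^{\mathrm{orb}}(X^4)$, its nontriviality is no contradiction: Theorem~\ref{GST} constrains $\pi_1$, not $\pi_1^{\mathrm{orb}}$, and the loop-and-spur spaces have $\pi_1^{\mathrm{orb}} = \zzz_{k\left|\beta\right|} \neq 1$ while being positively curved and orientable. If instead you mean $\pi_1(X^4)$, there is no reason it is nontrivial: a double edge through two fixed points occurs in the round $\sph^4$, namely as the suspension of a Seifert action on $\sph^3$ with two exceptional fibers, and $\sph^4$ is simply connected. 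Any valid contradiction must invoke the third fixed point, which your reconstruction along $\bar{c}$ never does. The missing idea --- and the paper's entire elimination argument --- is a count in the double branched cover $X^3_2(c)$, which is positively curved by Lemma 5.2 of~\cite{GW2}: each fixed point off $c$ lifts to \emph{two} points with small spaces of directions (part (1) of Condition \qprime{}), and each fixed point on $c$ lifts to one such point (part (2) of Condition \qprime{}), so if $c$ misses any of the three fixed points one obtains at least four points with small spaces of directions, contradicting Proposition~\ref{p:eulerbound}. This single count excludes loops and double edges, forces every closed singular curve to contain all three fixed points, and leaves only the unknottedness step, which you already handle correctly.
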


\begin{proof} The proof makes use of the results and methods of Section \ref{s:knots}. 
	
	Local flatness follows from $\bar{E}$ being an extremal set (see Section \ref{s:equivalex}). If a pair of points were joined by more than one curve, any two such curves would generate a closed curve $c$. The third point would then appear twice in the double branched cover over $c$, $X^3_2(c)$, and so $X^3_2(c)$ would have at least four singular points with small spaces of directions, contradicting Proposition \ref{p:eulerbound}.
	Therefore any closed curve must contain all three fixed points. Denoting the curve again by $c$, note that $(\bar{X}^3, c)$ satisfies Condition O (see Definition \ref{d:condition_o}) and an application of Theorem \ref{knot} shows that the curve is unknotted. 
\end{proof}

In case the finite isotropy does not form a closed curve, we may choose additional arcs between the fixed points so that the entire singular set $\bar{E} \cup \bar{F}$ still lies on an unknotted closed curve. Furthermore, we may assume that the curve is locally flat. By adding these virtual edges, we take the multigraph representing the singular set to be the complete graph $K_3$, and treat all spaces with three fixed points in a unified way.

The problem of identifying the Alexandrov space now lies in the context addressed by Section \ref{s:top3points}. Applying Theorem \ref{t:wcp}, we have the following result.

\begin{theorem}\label{t:3points}
Let $T^1$ act isometrically and effectively  on $X^4$,  a positively curved, closed, orientable Alexandrov space, so as to satisfy Condition \q{}. If there are three isolated fixed points, then $X^4$ is a finite quotient of a weighted complex projective space. Furthermore, the circle action can be extended to an action by homeomorphisms of $T^2$, which is induced from the standard $T^3$ action on $\sph^5$.
\end{theorem}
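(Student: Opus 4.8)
The plan is to reduce the problem, via the structural results already established, to the combinatorial classification of Section~\ref{s:top3points}, and then to play the Existence and Uniqueness theorems against one another to match $X^4$ with an explicit model. First I would record the shape of the orbit space. Since the fixed-point set is discrete with three points and Condition~\q{} holds, Proposition~\ref{p:orbitspace} gives $\bar{X}^3 \cong \sph^3$ with the non-principal orbits forming an embedded multigraph of maximal degree three on the three vertices $\bar{F}$. Lemma~\ref{l:3points} then shows that each pair of fixed points is joined by at most one locally flat edge of $\bar{E}$, and that any closed curve in $\bar{E} \cup \bar{F}$ passes through all three fixed points and is unknotted. Invoking the remark following Lemma~\ref{l:3points}, I would insert virtual edges where needed so that the singular set becomes the complete graph $K_3$ supported on a single unknotted, locally flat closed curve $\bar{c} \subset \sph^3$. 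At this point the hypotheses of Section~\ref{s:top3points} are met verbatim.

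Next I would assign invariants. Following Section~\ref{s:invariants}, choose a global section over $\bar{W} = \cl(\bar{X} \setminus \bar{U})$ and read off the triple $\left(\frac{\beta_1}{\alpha_1}, \frac{\beta_2}{\alpha_2}, \frac{\beta_3}{\alpha_3}\right) \in \qqq^3$, where any virtual edge contributes an invariant of the form $(1, \beta_i)$. By the unlabeled lemma immediately preceding Section~\ref{s:uniqueness}, these three fractions are pairwise unequal, since otherwise some $Z_i = \partial B_i$ would be $\sph^2 \times \sph^1$, which admits no positively curved metric and so cannot be a space of directions. With the pairwise-unequal constraint in hand, Theorem~\ref{t:wcp} produces a weighted complex projective space $\wcp$ and a finite group $\Gamma$ so that $\wcp/\Gamma$ carries a circle action realizing exactly these invariants, again with orbit space $\sph^3$ and singular set an unknot. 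Both $X^4$ and $\wcp/\Gamma$ are therefore closed oriented Alexandrov spaces whose circle actions satisfy the hypotheses of the Uniqueness Theorem~\ref{t:classification} with equivalent invariants, so that theorem supplies an orientation-preserving equivariant homeomorphism $f \colon X^4 \to \wcp/\Gamma$. This establishes the first assertion.

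For the final assertion I would transport the torus symmetry across $f$. By the construction underlying Theorem~\ref{t:wcp}, the realizing circle sits inside the residual torus $T^2 = T^3/T^1_{a,b,c}$ acting on $\wcp$, and the finite group $\Gamma = \zzz_{\bar{\alpha}} \times \zzz_{\bar{\beta}}$ is itself a subgroup of $T^2$; hence $T^2/\Gamma \cong T^2$ still acts on $\wcp/\Gamma$, the entire configuration descending from the standard $T^3$ action on $\sph^5$. Pulling this $T^2$ action back through the equivariant homeomorphism via $t \cdot x := f^{-1}\left(t \cdot f(x)\right)$ produces an action of $T^2$ on $X^4$ by homeomorphisms that restricts, by the $T^1$-equivariance of $f$, to the original circle action. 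The step I expect to require the most care is not any geometric estimate --- those are already packaged in Theorems~\ref{t:classification} and~\ref{t:wcp} --- but the bookkeeping tying the abstract invariants back to the concrete $\sph^5$ picture: one must confirm that $\Gamma$ genuinely lies in $T^2$ so that the residual torus survives the quotient, and that the circle singled out in Section~\ref{s:invariants} is indeed the image of the chosen basis circle of $T^2$, so that the extension is the asserted one. The subtlety here is exactly the rigidity failure --- that $\Sigma_{p_i}$ need not be a sphere --- which is what forced the use of unnormalized invariants and a global section in the first place.
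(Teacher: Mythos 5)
Your proposal is correct and follows essentially the same route as the paper: the paper's own (very terse) proof is exactly this reduction via Lemma \ref{l:3points} and the virtual-edge trick to the setting of Section \ref{s:top3points}, followed by the Existence Theorem \ref{t:wcp}, with the Uniqueness Theorem \ref{t:classification} and the pulled-back residual $T^2$ action left implicit. Your write-up simply makes explicit the uniqueness step and the descent of $T^2 = T^3/T^1_{a,b,c}$ through the finite quotient, which is precisely what the paper intends.
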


\begin{remark} In contrast to Theorem \ref{t:3points}, when there are just two isolated fixed points, not all $T^1$ actions  extend, including  those actions  that  are fixed-point-homogeneous. 
\end{remark}

\subsection{Two isolated fixed points}\label{s:twofixedpoints}

We begin this section by analyzing the possible configurations of the singular strata 
in the orbit space. We find two broad cases, which are investigated separately. 

\begin{lemma}\label{l:2pointstructure}
	Let $T^1$ act isometrically and effectively  on $X^4$,  a positively curved, closed, orientable, $4$-dimensional Alexandrov space so as to satisfy Condition \q{}, with fixed-point set consisting of two isolated points. Then the orbit space $\bar{X}^3$ is homeomorphic to $\sph^3$  and the singular strata $\bar{E} \cup \bar{F}$ are given by:
	\begin{enumerate}
		\item Two discrete points, $\bar{F}$, and at most three curves, $\bar{E}$, each of which joins the two points, so that the graph is unknotted, that is, the orbit space is homeomorphic as a stratified set to the suspension of a $2$-sphere with at most three points in its $0$-stratum; or
		\item Two discrete points, $\bar{F}$, and either one or two curves, $\bar{E}$, of which one is an unknotted loop based at one of the points while the other, should it exist, joins the two points.
	\end{enumerate} 
\end{lemma}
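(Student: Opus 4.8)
The plan is to combine the graph-theoretic constraints already established with the branched-cover machinery of Section \ref{s:knots}. First I would record the structural input: since the fixed-point set is discrete, Lemma \ref{l:s3orbit} gives $\bar X^3 \cong \sph^3$ and that $\bar E$ contains no simple closed curves, while Lemma \ref{l:graph} tells us that $\bar E \cup \bar F$ is an embedded multigraph in which every edge has its endpoints in $\bar F$ and at most three edges meet at each of the two vertices $\bar p_1, \bar p_2$. Writing $a$ for the number of edges joining $\bar p_1$ to $\bar p_2$ and $b, c$ for the numbers of loops based at $\bar p_1, \bar p_2$, the degree bound reads $a + 2b \le 3$ and $a + 2c \le 3$. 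Enumerating the solutions leaves exactly three possibilities: no loops with $a \le 3$; a single loop (say $b = 1$, $c = 0$) with $a \le 1$; or a loop at each vertex ($b = c = 1$) with $a \le 1$. The first two are precisely the configurations in the statement, so the combinatorial heart of the lemma is to exclude loops at both vertices.

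To rule out two loops $\bar c_1, \bar c_2$ based at $\bar p_1, \bar p_2$, I would iterate the double branched cover construction. Form $\Xi := X^3_2(\bar c_1)$, which is positively curved by Lemma 5.2 of \cite{GW2}. The branch point $\tilde p_1$ has a small space of directions by Condition \qprime{} part (2), since its link is the double branched cover of $\Sigma_{\bar p_1}$ over the two finite-isotropy directions cut out by $\bar c_1$; and $\bar p_2 \notin \bar c_1$ lifts to two points $\tilde p_2', \tilde p_2''$ whose spaces of directions are copies of the small space $\Sigma_{\bar p_2}$ (Condition \qprime{} part (1)), each carrying a lift of $\bar c_2$. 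The universal-cover argument of Theorem \ref{knot} then forces $\Xi$ to be simply connected, hence $\Xi \cong \sph^3$. Branching once more over the extremal loop $\tilde c_2'$ at $\tilde p_2'$ produces a positively curved space in which $\tilde p_1$ and $\tilde p_2''$ both lie off the branch locus, so each acquires two small lifts. These four small points contradict Proposition \ref{p:eulerbound}, excluding the two-loop case.

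It then remains to establish unknottedness. Whenever the singular graph contains an extremal closed curve $\bar c$ --- either two of the bridges in the loopless case ($a = 2$) or the unique loop in the one-loop case --- I would verify Condition O for $(\bar X^3, \bar c)$ using $\bar p_1$ and $\bar p_2$ as the two distinguished points: a point on $\bar c$ has a small lift by Condition \qprime{} part (2), while a point off $\bar c$ has small lifts because its space of directions is already small (Condition \qprime{} part (1)), so Theorem \ref{knot} gives that $\bar c$ is unknotted. The $\uptheta$-graph case $a = 3$ is handled separately: there each $\bar p_i$ has three components of finite isotropy, so Condition \qprime{} part (3) yields $\diam(\Sigma_{\bar p_i}) \le \pi/4$, and the Equivariant Suspension Theorem \ref{t:est} puts the graph in standard suspension position. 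A single bridge ($a = 1$) is automatically unknotted, since a locally flat arc in $\sph^3$ has a ball neighborhood and hence simply connected complement, while $a = 0$ is vacuous.

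Finally I would assemble the cases. With no loops, the edges form an unknotted family of at most three arcs between two points, which is ambient homeomorphic to the standard configuration; hence $\bar X^3$ is stratified-homeomorphic to the suspension of $\sph^2$ with $a \le 3$ marked points, giving case (1). With one loop, the previous step shows the loop is unknotted and the remaining edge, if present, is a single bridge, giving case (2). I expect the main obstacle to be the two-loop exclusion: making the iterated branched cover rigorous requires checking at each stage that positive curvature is preserved (via Lemma 5.2 of \cite{GW2}), that the relevant curves remain extremal away from earlier branch loci, and that the successive lifts of the fixed points stay small --- which is exactly the content that parts (1) and (2) of Condition \qprime{} are designed to supply.
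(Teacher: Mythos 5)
Your proposal is correct and follows essentially the same route as the paper: identify the orbit space as $\sph^3$ with a degree-$\leq 3$ multigraph via Lemmas \ref{l:graph} and \ref{l:s3orbit}, enumerate the configurations, exclude the two-loop cases by iterated double branched covers producing four small points in violation of Proposition \ref{p:eulerbound}, establish unknottedness of closed curves via Condition O and Theorem \ref{knot}, and handle the $\uptheta$-graph with Theorem \ref{t:est}. Your treatment of the two-loop exclusion is in fact more detailed than the paper's one-sentence version, spelling out the smallness of each lift and the intermediate simple connectivity of the first cover.
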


\begin{figure}[t]
	\begin{tabular}{c@{\hspace{2cm}}c}
		\begin{tikzpicture}
		\useasboundingbox (-0.5,-0.475) rectangle (2.5,.95);
		\Vertex[L=\bar{p}]{p}
		\Vertex[L=\bar{q},x=2,y=0]{q}
		\end{tikzpicture}
		& 
		\begin{tikzpicture}
		\useasboundingbox (-0.5,-0.475) rectangle (2.5,.95);
		\Vertex[L=\bar{p}]{p}
		\Vertex[L=\bar{q},x=2,y=0]{q}
		\Edge(p)(q)
		\end{tikzpicture}
		\\
		(a) & (b) \\[0.5cm]
		\begin{tikzpicture}
		\useasboundingbox (-0.5,-0.475) rectangle (2.5,.95);
		\Vertex[L=\bar{p}]{p}
		\Vertex[L=\bar{q},x=2,y=0]{q}
		\Edge[style={bend left}](p)(q)
		\Edge[style={bend right}](p)(q)
		\end{tikzpicture}
		& 
		\begin{tikzpicture}
		\useasboundingbox (-0.5,-0.475) rectangle (2.5,.95);
		\Vertex[L=\bar{p}]{p}
		\Vertex[L=\bar{q},x=2,y=0]{q}
		\Loop[dist=1.5cm,dir=EA,style={thick}](q)
		\Loop[dist=1.5cm,dir=WE,style={thick}](p)
		\end{tikzpicture}
		\\
		(c) & (d) \\[0.5cm]
		\begin{tikzpicture}
		\useasboundingbox (-0.5,-0.475) rectangle (2.5,.95);
		\Vertex[L=\bar{p}]{p}
		\Vertex[L=\bar{q},x=2,y=0]{q}
		\Loop[dist=1.5cm,dir=WE,style={thick}](p)
		\end{tikzpicture}
		& 
		\begin{tikzpicture}
		\useasboundingbox (-0.5,-0.475) rectangle (2.5,.95);
		\Vertex[L=\bar{p}]{p}
		\Vertex[L=\bar{q},x=2,y=0]{q}
		\Edge(p)(q)
		\Loop[dist=1.5cm,dir=WE,style={thick}](p)
		\end{tikzpicture}
		\\
		(e) & (f) \\[0.5cm]
		\begin{tikzpicture}
		\useasboundingbox (-0.5,-0.475) rectangle (2.5,.95);
		\Vertex[L=\bar{p}]{p}
		\Vertex[L=\bar{q},x=2,y=0]{q}
		\Edge(p)(q)
		\Loop[dist=1.5cm,dir=EA,style={thick}](q)
		\Loop[dist=1.5cm,dir=WE,style={thick}](p)
		\end{tikzpicture}
		& 
		\begin{tikzpicture}
		\useasboundingbox (-0.5,-0.475) rectangle (2.5,.95);
		\Vertex[L=\bar{p}]{p}
		\Vertex[L=\bar{q},x=2,y=0]{q}
		\Edge[style={bend left}](p)(q)
		\Edge[style={bend right}](p)(q)
		\Edge(p)(q)
		\end{tikzpicture}
		\\
		(g) & (h) \\
	\end{tabular}
	\caption{Multigraphs on two vertices with maximal degree at most three. 	}
	\label{f:multigraphs}
\end{figure}

\begin{proof}
	From Lemma \ref{l:graph} and Proposition \ref{p:orbitspace}, we know that $\bar{X}^3$ is homeomorphic to a $3$-sphere. The non-principal orbits project to a multigraph in $\bar{X}^3$ which has two vertices, corresponding to fixed points, and has maximal degree at most three.
	
	Since the circle action satisfies Condition \q{}, the spaces of directions at the vertices are small, and for any closed curve $c$ in the multigraph, $(\bar{X}^3, c)$ satisfies Condition O. It follows that any closed curve in the multigraph is unknotted.
	
	We consider the possible configurations by maximal degree of the multigraph, all of which are shown in Figure \ref{f:multigraphs}. 
	When the maximal degree is zero, there are two disconnected vertices (a). 
	If it is one, the vertices are joined by an edge (b). Both of these are covered under Item (1).

	When the maximal degree is two, there are three possible multigraphs. There may be two edges, each connecting the two vertices (c). This space is covered under Item (1). There may be two loops, each based at a separate vertex (d). By taking a double branched cover over one loop, and then over the other, we obtain a space with at least four points with small spaces of directions, in violation of Proposition \ref{p:eulerbound}. 
	
	The final possibility in maximal degree two is that there may be only one loop, $c$, based at one vertex, $\bar{p}$, while the other vertex, $\bar{q}$, is disconnected (e). This case falls under Item (2). By adding a virtual edge between the vertices, we can treat this case 
	in a unified manner with (f).

	When the maximal degree is three, there are again three possible graphs. If there are two edges, then one edge forms a loop while the other joins the two vertices (f), as described under Item (2).  If there are three edges, then two edges form loops, one at each vertex (g). This is ruled out by the argument in the previous paragraph for the case of two loops. Finally, the three edges may each join the two vertices (h). In this case, the Equivariant Suspension Theorem \ref{t:est} shows that the resulting graph is unknotted, so that the case falls under Item (1).
\end{proof}

We first consider the orbit spaces which fall under Item (1) in Lemma \ref{l:2pointstructure}, which are those shown in Figure \ref{f:multigraphs} (a--c, h). We obtain the following proposition by applying 
Propositions \ref{p:suspension} and \ref{p:3space}.

\begin{proposition}[Suspension case]\label{p:4suspension}
	Let $T^1$ act isometrically and effectively  on $X^4$,  a positively curved, closed, orientable, $4$-dimensional Alexandrov space with fixed-point set consisting of two isolated points. Suppose that $\bar{X}^3$ is homeomorphic as a stratified set to a suspension, that is, it is homeomorphic to $\sph^3$, $\bar{F}$ is two discrete points and $\bar{E}$ is at most three curves, each of which joins the two points of $\bar{F}$, so that the singular strata are unknotted.
	
	Then $X^4$ is equivariantly homeomorphic  to the suspension of  $\sph^3 / \Gamma$ with a linear $T^1$ action for some finite $\Gamma \subgp \SO(4)$.
\end{proposition}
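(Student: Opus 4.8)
The plan is to obtain the conclusion by chaining the two structural results already in hand. First I would note that the hypotheses here are exactly those required by Proposition \ref{p:suspension}: the orbit space $\bar{X}^3$ is homeomorphic to a suspension through a homeomorphism that respects the stratification by orbit types and sends the two images $\bar{p}_1, \bar{p}_2$ of the fixed points to the suspension vertices. Invoking that proposition at once identifies $\Sigma_{p_1} \cong \Sigma_{p_2}$ equivariantly, calling the common space $\Sigma$, and shows that $X^4$ is equivariantly homeomorphic to the spherical suspension of $\Sigma$, with the $T^1$-action realized as the suspension of the isotropy action on $\Sigma$.

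Next I would turn to $\Sigma$ on its own terms. As the space of directions at a point of the closed $4$-dimensional space $X$, it is a closed $3$-dimensional Alexandrov space with $\curv \geq 1$. Since $p_1$ is a fixed point, its isotropy group is all of $T^1$, and because the action on $X$ is effective the induced isotropy action of $T^1$ on $\Sigma$ is effective, as recalled in Section \ref{s:equivalex}. Proposition \ref{p:3space} then applies and gives an equivariant homeomorphism $\Sigma \cong \sph^3/\Gamma$ carrying a linear $T^1$-action, for some finite $\Gamma \subgp \O(4)$.

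It remains to refine $\O(4)$ to $\SO(4)$, which is where orientability enters. Because $X^4$ is orientable it admits a local orientation near $p_1$, so $\Sigma = \Sigma_{p_1}$ is itself orientable (cf.\ the discussion following Proposition \ref{p:GGG}); an orientable quotient $\sph^3/\Gamma$ forces $\Gamma \subgp \SO(4)$. Combining the two identifications, $X^4 \cong \Susp(\sph^3/\Gamma)$, and the suspension of a linear action is again linear (extend the representation to $\rrr^5 = \rrr^4 \oplus \rrr$ with the circle and $\Gamma$ acting trivially on the extra coordinate). Since both ingredients are already established, there is no substantial obstacle; the only points needing care are verifying that the isotropy action on $\Sigma$ is genuinely effective, so that Proposition \ref{p:3space} may be applied, and the orientability bookkeeping that sharpens $\O(4)$ to $\SO(4)$.
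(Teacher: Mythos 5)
Your proposal is correct and follows exactly the paper's route: the paper's entire proof is the statement that the result ``follows directly from Propositions \ref{p:suspension} and \ref{p:3space},'' which is precisely the chain you carry out. Your additional care about effectiveness of the isotropy action and the orientability argument sharpening $\O(4)$ to $\SO(4)$ simply makes explicit the bookkeeping the paper leaves to the reader.
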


Finally, we turn to the orbit spaces given by Item (2) of Lemma \ref{l:2pointstructure}, which are those shown in Figure \ref{f:multigraphs} (e--f). 

As part of this investigation we  need to make use of the fundamental groupoid.
Recall that this is an enhanced version of the fundamental group allowing for multiple base points.
If $A$ is a set of base points in a topological space $X$ we write the fundamental groupoid as $\pi_1(X,A)$.
The groupoid is made up of the homotopy classes of paths between the base points, including the loops.

The fundamental groupoid is often represented in terms of category theory: the base points provide the objects of the category and the paths between the base points are the morphisms. Composition of morphisms is the concatenation of paths. All morphisms are invertible, since paths can be traversed backwards.
This permits a neat statement of the Seifert--van-Kampen Theorem which does not require the intersections of the two covering subsets to be path connected.
The following version is stated and proved in Section 1.6 in \cite{BHS}\footnote{The relevant extract is available at {\scriptsize\url{https://groupoids.org.uk/pdffiles/vKT-proof.pdf}}.}. 

\begin{theorem}\label{t:groupoidSVK}\cite{BHS} Let $X_1$, $X_2$,  be open subsets of a topological space $X$, with $X=X_1\cup X_2$. Let $X_0=X_1\cap X_2$ and let  $A$ be a subset of
$X$ meeting each path component of $X_1, X_2$, and $X_0$ (and therefore of $X$). Let $A_i = X_i \cap A$
for $i = 0, 1, 2$. Then the following diagram of morphisms induced by inclusion
$$
\xymatrix{
 \pi_1(X_0, A_0)  \ar[r]^{a_1}  \ar[d]^{a_2} & \pi_1(X_1, A_1)   \ar[d]^{b_1} \\
 \pi_1(X_2, A_2) \ar[r]^{b_2} & \pi_1(X,A)
}
$$
is a pushout of groupoids.
\end{theorem}
We now consider the orbit spaces which fall under Item (2) in Lemma \ref{l:2pointstructure}, which are those shown in Figure \ref{f:multigraphs} (e, f).
\begin{proposition}[Loop-and-spur case]\label{p:loopandspur}  Let $T^1$ act isometrically and effectively  on $X^4$,  a positively curved, closed, orientable, $4$-dimensional Alexandrov space with fixed-point set consisting of two isolated points. Suppose that the orbits of exceptional isotropy project to one or two curves, $\bar{E} \subset \bar{X}$, of which one is an unknotted loop based at one of the points while the other, should it exist, joins the two points.
		
	Then $X^4$ is equivariantly homeomorphic to a finite quotient of a weighted complex projective space with a $T^1$ action induced by a linear action on $\sph^5$. 
\end{proposition}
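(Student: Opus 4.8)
The plan is to mirror the treatment of three isolated fixed points in Section~\ref{s:top3points}: assign Seifert-type invariants to the action, identify the relevant finite group via an orbifold fundamental group computation, produce an explicit weighted-projective model realizing the invariants, and finish with a cut-and-paste uniqueness argument.

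First I would record the topology supplied by Lemma~\ref{l:2pointstructure}: the orbit space is $\sph^3$, and the finite isotropy consists of an unknotted loop $\bar c$ based at $\bar p$ together with a single spur joining $\bar p$ to $\bar q$, where in configuration~(e) the spur is a virtual edge adjoined as in Section~\ref{s:invariants}. I would then assign unnormalized invariants following Section~\ref{s:invariants}. The complement in $\sph^3$ of a tubular neighborhood of $\bar c$ together with the spur is a handlebody with $H^2 = 0$, so the free part admits a global section; reading off the Seifert data with respect to it gives a pair $(\alpha_0, \beta_0)$ for the loop and a pair $(\alpha_1, \beta_1)$ for the spur. Because both ends of the loop abut $\bar p$, the space of directions $\Sigma_p \cong \sph^3/\Gamma_p$ inherits a fixed-point-free circle action whose Seifert base carries two exceptional fibers from the loop and one from the spur, while $\Sigma_q$ carries a single exceptional fiber.

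Next I would pin down the finite group. Applying the Seifert presentation of Theorem~\ref{t:pi1} to the induced actions on $\Sigma_p$ and on $X^4$, I would compute the orbifold fundamental group; the generator associated to the loop is precisely what forces $\Gamma \neq \{e\}$, so that these spaces are genuine finite quotients rather than weighted projective spaces themselves. With $\Gamma$ in hand I would construct a model directly, following the method of Theorem~\ref{t:wcp}: choose weights $a, b, c$ and a basis of the residual torus so that the map $p_* \colon H_1(T^3; \zzz) \to H_1(\T^2; \zzz)$ realizes the isotropy orders $\alpha_i$, and arrange an order-two symmetry of the resulting $\wcp$, commuting with the chosen circle, whose quotient is intended to fold the triangular singular set of $\wcp/T^1$ into exactly the loop-and-spur pattern with the prescribed invariants. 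This would exhibit a $\wcp/\Gamma$ carrying a circle action, induced from the linear $T^3$ action on $\sph^5$, with the same invariants as $X^4$.

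Finally I would prove that the invariants determine the space, by the cut-and-paste-and-cone method of Theorem~\ref{t:classification}: match global sections over the free part using Lemma~\ref{l:isotopy}, extend the equivariant homeomorphism over the solid tori neighbouring the loop and the spur, and cone off over $\bar p$ and $\bar q$. The main obstacle is the coning step at the base of the loop. There $\Sigma_p$ carries a free circle action with two exceptional orbits issuing from the single loop, and I must show the boundary homeomorphism extends over the cone; this reduces to checking that the two candidate isotropy actions on $\Sigma_p$ coincide, which follows from the three-dimensional equivariant classification of Proposition~\ref{p:3space} once their Seifert data are matched. The second delicate point, on the existence side, is verifying that the folding symmetry produces a genuine loop rather than a half-edge terminating at a new branch point, and that the resulting orbit space is the one at hand; here positive curvature, through the extent bound of Proposition~\ref{p:eulerbound} and Condition~O, must be used to rule out the degenerate configurations.
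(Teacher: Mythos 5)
Your architecture (assign invariants, prove uniqueness by cut-and-paste, prove existence by explicit folded models) is genuinely different from the paper's proof, which instead \emph{reduces} the loop-and-spur case to the already-settled three-fixed-point case: the paper decomposes $\bar{X}^3$ into four pieces, uses Theorem \ref{t:pi1} and Seifert--van Kampen to compute $\pi_1^{\mathrm{orb}}(X^4) \cong \zzz_{k|\beta|}$, where $\zzz_k$ is the isotropy along the loop, then passes to the orbifold universal cover $\tilde{X}$, where $p$ lifts to one fixed point and $q$ to $k$ fixed points. Proposition \ref{p:fixbound} bounds the number of fixed points upstairs by three, forcing $k=2$, and Theorem \ref{t:3points} applied to $\tilde{X}$ then exhibits $X^4$ as the quotient of a finite quotient of $\wcp$ by the deck involution. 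No new uniqueness or realization theory for the loop-and-spur graph is needed.

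The genuine gap in your plan is that you never determine the order $k$ of the loop isotropy. Since $\Sigma_p$ is a spherical space form Seifert-fibered over $\sph^2$ with exceptional fibers of orders $(k,k,\alpha)$, the platonic constraint gives $k=2$ (prism manifold) or $k=3$ (tetrahedral), and nothing in your outline excludes the tetrahedral case. This is fatal as written: your existence step folds the triangular singular set of $\wcp/T^1$ by an \emph{order-two} symmetry, so it only produces loops of $\zzz_2$ isotropy; a hypothetical space whose loop carries $\zzz_3$ isotropy would have invariants realized by no model, and your uniqueness theorem would then say nothing about it, leaving the classification incomplete. Your computation of $\pi_1^{\mathrm{orb}}$ is invoked only to conclude the quotient group is non-trivial, whereas its real work in the paper is precisely the step you are missing: counting fixed points in the orbifold universal cover and colliding that count with the bound of Proposition \ref{p:fixbound} to kill $k=3$. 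A secondary cost of your route, even after repairing this, is that Remark \ref{generalequivalence} extends Section \ref{s:top3points} only to singular sets lying on an unknotted circle, so the equivalence relation on invariants, the cut-and-paste uniqueness, and the realization theorem would all have to be rebuilt from scratch for the loop-and-spur graph, including a verification that the order-two folding realizes \emph{every} admissible pair of loop and spur invariants.
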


\begin{proof}
	To unify the treatment, if $\bar{E}$ contains only one curve, we add the geodesic $\bar{p} \bar{q}$ to the graph as a ``virtual edge'' which corresponds to orbits with isotropy $\zzz_1$. The multigraph given by the singular set is then that with two vertices, one loop, and one edge. We denote the loop by $c$, the vertex on $c$ by $\bar{p}$, and the other vertex by $\bar{q}$. 
	
We  show that $X^4$ is the quotient by an involution of some space such that the lifted circle action has three fixed points. Theorem \ref{t:3points} then gives us the result. 

In order to do this we  endow $X^4$ with an orbifold structure. The only topological singularities in $X^4$ are at the isolated fixed points of the action. Neighborhoods of these points can be given charts with a local group $\Gamma \subset SO(4)$ such that $\Gamma$ is isomorphic to the fundamental group of the space of directions. All other points are manifold points 
and we take the local group there to be trivial.
	
Now we can calculate the orbifold fundamental group of $X^4$, $\pi_1^{\mathrm{orb}} (X^4)$.
Using a transversality argument, one sees that $\pi_1^{\mathrm{orb}}(X)$ is determined by the regular part of $X$ and singularities of codimension one and two only (see Theorem A.I.4 in \cite{HQ}). So, since $X^4$ has no singularities of codimension two, $\pi_1^{\mathrm{orb}} (X^4) \cong \pi_1(X^4_{\mathrm{reg}})$, where $X^4_{\mathrm{reg}}$ is the regular part of the orbifold.

We decompose $\bar{X}^3$ into four regions as shown in Figure \ref{f:orbifoldgroup}, which lifts to a decomposition of $X^4$.
It simplifies this discussion to treat the sets as though they were closed sets which meet along their boundary.
In order to apply the Seifert--van-Kampen Theorem, we need an open cover of $X^4$. However, using closed sets does not generate any problems, 
provided the boundaries of these sets have collar neighborhoods.
	
		\begin{figure}
		\begin{tikzpicture}
		\node[anchor=south west,inner sep=0] (image) at (0,0) {\includegraphics[width=0.77\textwidth]{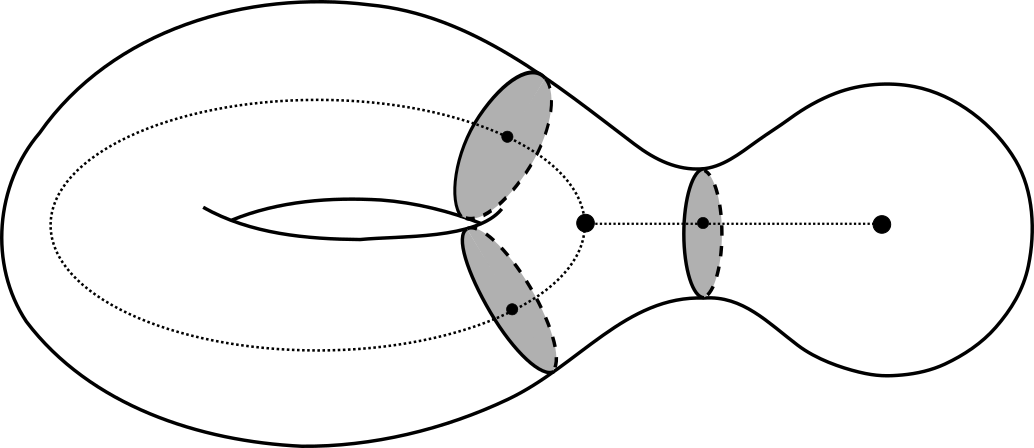}};
		\begin{scope}[
		x={(image.south east)},
		y={(image.north west)}
		]
		\node  at (0.6,0.4) {$\bar{A}$};
		\node  at (0.88,0.3) {$\bar{B}$};
		\node  at (0.25,0.4) {$\bar{C}$};
		\node  at (0.68,0.16) {$\bar{D}$};
		\node  at (0.54,0.5) {$\bar{p}$};
		\node  at (0.88,0.5) {$\bar{q}$};
		\node  at (0.03,0.5) {$c$};
		\end{scope}
		\end{tikzpicture}
		\caption{Decomposition of $\bar{X}^3 \cong \sph^3$ into four closed sets, where $\bar{D}$ is the complement of
		the solid torus, $\bar{A}\cup\bar{B}\cup \bar{C}$, viewed in $S^3\setminus\{\infty\}$.}
		\label{f:orbifoldgroup}
	\end{figure}

	Let $\bar{A}$ be a conical neighborhood of $\bar{p}$ and let $\bar{B}$ be a conical neighborhood of $\bar{q}$ so that they intersect in a $\disk^2$ which is transverse to the edge $\bar{p} \bar{q}$. Let $\bar{C}$ be a neighborhood of $\left( c \,\setminus\, \bar{A} \right)$ homeomorphic to $[0,1] \times \disk^2$, intersecting $\bar{A}$ at $\left\lbrace 0 , 1 \right\rbrace \times \disk^2$ but disjoint from $\bar{B}$. Let $\bar{D}$ be $\bar{X}\, \setminus \,\left( \bar{A} \cup \bar{B} \cup \bar{C} \right)$. Note that $\bar{D} \cong \sph^1 \times \disk^2$. 
	Let $A$, $B$, $C$ and $D$ be the preimages of each of $\bar{A}$, $\bar{B}$, $\bar{C}$, and $\bar{D}$, respectively.

{\bfseries Calculating $\pi_1(A_{\mathrm{reg}} \cup B_{\mathrm{reg}})$.} The sets $A$ and $B$ are both cones on spherical manifolds.
It follows that $A_{\mathrm{reg}}$ and $B_{\mathrm{reg}}$ are homotopy equivalent to those spherical manifolds.
We  calculate the fundamental groups of those 3-manifolds from their  Seifert invariants.

Note that $\pi:D\rightarrow \bar{D}$ is a principal $S^1$ bundle, and recall that these are classified by $H^2(\bar{D})=0$. 
Therefore, there exists a section $\sigma \colon \bar{D} \to D$. 
This section can be used to define the invariants, as described in Sections \ref{s:seifert} and \ref{s:invariants}. Let $q_1$ and $q_2$ be the two components of $\sigma \left( \partial \left(\bar{A} \cap \bar{C} \right)\right)$, where $\bar{A}\cap \bar{C}$ is oriented as a submanifold of $\partial \bar{A}$. Let $q_3$ be $\sigma \left( \partial \left( \bar{A} \cap \bar{B} \right) \right)$, again with $\bar{A}\cap \bar{B}$ oriented as a submanifold of $\partial \bar{A}$.
	
	Without loss of generality, $\sigma$ may be chosen so that the invariants of $\partial A$ are $\left\{0;  (k, -1), (k,1), (\alpha, \beta)  \right\}$ and the invariants of $\partial B$ are then $\left\{0;  (\alpha, -\beta)  \right\}$. Where the edge in the graph is only virtual, $\alpha=1$. Since $\partial A$ cannot be $\sph^2 \times \sph^1$, we have $\beta \neq 0$. By assumption, the loop is not virtual and so $k \geq 2$.
	
	 We may also assume that the three pairs of invariants correspond to the loops $q_1, q_2, q_3$ in that order. Since the first pair is $(k,-1)$, the curve $q_1$ is, in fact, homotopic to the corresponding exceptional orbit of isotropy $\zzz_k$.
	
	From these invariants, using Theorem \ref{t:pi1}, we obtain the following presentations of the fundamental groups. Letting $h$ be a principal orbit, the group $\pi_1 (A_{\mathrm{reg}}, x)$ is generated by $h, q_1, q_2$ and $q_3$ and is given by 
	$$ \left\langle q_1, q_2, q_3, h \st \left[ h, q_i \right] = 1, q_1^k h^{-1} = 1, q_2 ^k h^{1} = 1, q_3 ^{\alpha} h^{\beta} = 1, q_1 q_2 q_3 = 1 \right \rangle.$$
	
	Note that the section $q_3$ must have its orientation reversed for the calculation of $\pi_1 (B_{\mathrm{reg}}, x)$, so that we get
		$$ \pi_1 (B_{\mathrm{reg}}, x) = \left\langle q_3^{-1}, h \st \left[ h, q_3^{-1} \right] = 1, q_3 ^{-\alpha} h^{-\beta} = 1, q_3^{-1} = 1 \right \rangle = \zzz_{\left| \beta \right|}.$$

When applying the Seifert--van-Kampen Theorem to calculate $ \pi_1 (A_{\mathrm{reg}} \cup B_{\mathrm{reg}}, x)$ there is no need to include any additional relations, since these are encoded in the use of $q_3$ and $h$ as generators of both groups. Taking advantage of $q_3^{-1} = 1$ we immediately obtain $q_2 = q_1^{-1}$. Since $h=q_1^k$ we may discard the relation $\left[ h, q_1 \right] = 1$ as being already implied, but it is convenient later to maintain both generators. The group reduces to
	$$ \pi_1 (A_{\mathrm{reg}} \cup B_{\mathrm{reg}}, x) = 
	\left\langle 
	q_1, h \st \left[ h, q_1 \right] = 1, q_1^k h^{-1} = 1, h^{\beta} = 1 \right \rangle,$$
	which is simply the cyclic group of order $k \left| \beta \right|$, generated by $q_1$. We note once more that $q_1$ is an orbit with isotropy $\zzz_k$ corresponding to the `loop' in $\bar{X}^3$  and $h = q_1 ^k$ is a principal orbit.

{\bfseries Calculating $\pi_1(A_{\mathrm{reg}} \cup B_{\mathrm{reg}} \cup C_{\mathrm{reg}})$.} In order to include $C$, observe that $(A \cup B) \cap C$ has two components.
This means we must make use of the fundamental groupoid rather than the fundamental group.
Let $l$ be a longitudinal curve on the torus $\partial \left( \bar{A} \cup \bar{B} \cup \bar{C} \right)$ such that $l \cap C$, and hence $l \cap (A \cup B)$, is connected. 
We choose $y,z \in \sigma(l)$, one in each component of $(A \cup B) \cap C$, as the basepoints. 

The groupoid $\pi_1 (A_{\mathrm{reg}} \cup B_{\mathrm{reg}}, \lbrace y, z \rbrace)$ has the points $y$ and $z$ as objects. The morphisms $y \to y$ and $z \to z$ are both given by the fundamental group, so they are the cyclic group generated by $q_1$.
The morphisms $y \to z$ are given by the portion of $\sigma(l)$ lying in $A \cup B$, which we call $m$, composed with an element of $\pi_1 (A_{\mathrm{reg}} \cup B_{\mathrm{reg}}, z)$ 
It is clear that $m$ followed by $q_1 \in \pi_1 (A_{\mathrm{reg}} \cup B_{\mathrm{reg}}, z)$ is homotopic to $q_1 \in \pi_1 (A_{\mathrm{reg}} \cup B_{\mathrm{reg}}, y)$ followed by $m$.

Note that $\bar{C}$ is contractible and that $C = C_{\mathrm{reg}}$, so $\pi_1 (C_{\mathrm{reg}}, y)$ is the cyclic group generated by $q_1$. 
The full groupoid is then given by a second copy of the cyclic group for $\pi_1 (C_{\mathrm{reg}}, z)$
with the morphisms $y \to z$ given by the remaining part of $\sigma(l)$, which we call $n$, composed with an element of $\pi_1 (C_{\mathrm{reg}}, z)$. Once more, $n$ commutes with $q_1$.

We now obtain $\pi_1 (A_{\mathrm{reg}} \cup B_{\mathrm{reg}} \cup C_{\mathrm{reg}}, \lbrace y, z \rbrace)$ by Theorem \ref{t:groupoidSVK} as the push-out in the category of groupoids. 
Note that the fundamental groupoid of the intersection, $\pi_1 ((A_{\mathrm{reg}} \cup B_{\mathrm{reg}}) \cap C_{\mathrm{reg}}, \lbrace y, z \rbrace)$, is given by two copies of the cyclic group generated by $q_1$ for morphisms $y \to y$ and $z \to z$ while the set of morphisms $y \to z$ is empty. 
The composition of $m$ followed by $n^{-1}$ introduces a new loop based at $y$, which we  call $a$, that commutes with $q_1$. Clearly $a = \sigma(l)$. We obtain 
$$\pi_1 (A_{\mathrm{reg}} \cup B_{\mathrm{reg}} \cup C_{\mathrm{reg}}, y) = 
	\left\langle 
	a, q_1, h \st \left[ a, q_1 \right] = 1, q_1^k h^{-1} = 1, h^{\beta} = 1 \right \rangle$$
and similarly a new loop is introduced based at $z$.

{\bfseries Calculating $\pi_1(A_{\mathrm{reg}} \cup B_{\mathrm{reg}} \cup C_{\mathrm{reg}} \cup D_{\mathrm{reg}})$.}
	Finally, $\bar{D}$ is a solid torus, with fundamental group generated by $m$, the meridianal curve on $\partial \left( \bar{A} \cup \bar{B} \cup \bar{C} \right)$. 
Letting $b = \sigma(m)$, the group $\pi_1 (D_{\mathrm{reg}}, y)$ (note $D = D_{\mathrm{reg}}$) is generated by $b$ and $h$. The fundamental group of the boundary $\pi_1 (\partial D_{\mathrm{reg}}, y)$ is generated by $b$, $a$ and $h$. Since $a$ is killed by the inclusion $\partial D_{\mathrm{reg}} \to D_{\mathrm{reg}}$ and the inclusion $\partial D_{\mathrm{reg}} \to \left( A_{\mathrm{reg}} \cup B_{\mathrm{reg}} \cup C_{\mathrm{reg}} \right)$ maps $b \mapsto q_1$, we are left with $$\pi_1^{\mathrm{orb}} (X^4, y) \cong \pi_1 (X^4_{\mathrm{reg}}, y) \cong \zzz_{k \left| \beta \right|},$$
	a cyclic group generated by the orbit with isotropy $\zzz_k$, where $k \geq 2$ and $\beta = 0$.
	
{\bfseries Consequences.}	Now take the universal cover of $X^4$, $\widetilde{X}$, and consider the lifted circle action. 
The point $p \in X^4$ has only one lift in $\widetilde{X}$, while $q$ has $k$ lifts. There are therefore $k+1$ fixed points of the circle action on $\widetilde{X}$ so, by Proposition \ref{p:fixbound}, $k+1 \leq 3$ and therefore $k=2$. By Theorem \ref{t:3points} and the triviality of $\pi_1^{\mathrm{orb}} (\widetilde{X})$, $\widetilde{X}$ is some $\wcp$. The space $X^4$ is therefore a quotient by an involution of some $\wcp / \zzz_{\left| \beta \right|}$.

It is now possible to check that the local group at $p$, which is $\pi_1(A_{\mathrm{reg}})$, is in fact the dihedral group of order $4 \alpha | \beta |$. The local group at the lift of $p$ is generated by the loop $q_3 \in \pi_1(A_{\mathrm{reg}})$, which is a cyclic normal subgroup of order $2 \alpha$. The local group at the lifts of $q$ will be trivial, and therefore $\widetilde{X} \cong \ccc P^2_{\pm 2\alpha, 1, 1}$.
	\end{proof}

\begin{example}
Here we explicitly give examples of involutions on weighted complex projective spaces which produce the loop-and-spur configuration discussed in Proposition \ref{p:loopandspur}.

Recall the description of the weighted $\cp^2_{a,b,c}$ in Section \ref{s:cp2}.  Consider the $T^2$ action on $\sph^5\subset \ccc^3$ generated by the circles $T^1_{2n,-1,-1}$ and $T^1_{3n,-2,-1}$, as well as the involution $\iota \colon \sph^5 \to \sph^5$ given by $(z_1, z_2, z_3) \mapsto (\bar{z_1}, \bar{z_3}, -\bar{z_2})$. The quotient of $\sph^5$ by the first circle is the space $\cp^2_{2n,-1,-1}$, and the second circle induces a circle action on $\cp^2_{2n,-1,-1}$ with three fixed points and two components of finite isotropy of order $n$, so that the Seifert invariants are $\left( \frac{0}{1}, \frac{-1}{n}, \frac{1}{n}\right)$.

The involution descends to an involution $\bar{\iota}$ on $\cp^2_{2n,-1,-1}$ which has exactly one fixed point, corresponding to the circle $(z_1,0,0) \subset \sph^5$. The circles $(0,z_2,0)$ and $(0,0,z_3)$, the other two fixed points of the circle action on $\cp^2_{2n,-1,-1}$, are interchanged by $\bar{\iota}$. 

The circle still acts on $\cp^2_{2n,-1,-1} / \bar{\iota}$, and in fact the involution $\iota$ descends all the way to $\cp^2_{2n,-1,-1} / T^1 \cong \sph^3$, where it fixes a circle. Since $\bar{\iota}$ fixed only a single point in $\cp^2_{2n,-1,-1}$, it follows that the circle action on $\cp^2_{2n,-1,-1} / \bar{\iota}$ has a codimension $2$ component of $\zzz_2$ isotropy, and that in the orbit space, $\sph^3$, this maps to a loop based at the fixed point of $\bar{\iota}$. As noted earlier, the other two fixed points of the circle action are identified by $\bar{\iota}$, so the orbit space has the structure of a loop of $\zzz_2$ isotropy and a spur of $\zzz_n$ isotropy.
\end{example}

\section{General Alexandrov spaces}\label{s:alex}

This work has been entirely motivated by the following conjecture.

\begin{conjecture} \label{conj} 
	Let $T^1$ act isometrically and effectively on $X^4$, where $X^4$ is a $4$-dimensional, closed, positively curved, orientable Alexandrov space. Then, up to equivariant homeomorphism, $X$ is one of the following spaces:
	\begin{enumerate}
		\item the suspension of a spherical $3$-manifold, with a linear action; or
		\item a finite quotient of a weighted complex projective space with a linear action.
	\end{enumerate}
\end{conjecture}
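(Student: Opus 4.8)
The statement is exactly the Main Theorem~\ref{t:amsr} with the hypothesis of Condition \q{} deleted, so the plan is to prove that Condition \q{} is automatically satisfied by every isometric effective $T^1$ action on a closed, positively curved, orientable $X^4$; the conjecture then follows at once from Theorem~\ref{t:amsr}. The fixed-point-homogeneous case needs no reduction, as Proposition~\ref{p:fphcase} classifies those actions unconditionally and its two conclusions are precisely items (1) and (2). In the remaining case the fixed-point set is discrete, and at an isolated fixed point $p$ the isotropy circle acts fixed-point-freely on the space of directions $\Sigma_p$, a $3$-dimensional Alexandrov space with $\curv\geq 1$. By definition, Condition \q{} asserts that each such isotropy action satisfies Condition \qprime{}, so the whole problem reduces to verifying Condition \qprime{} for an arbitrary fixed-point-free $T^1$ action on a positively curved $3$-space.

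I would first fix the topology of this local model. By Proposition~\ref{p:3space} the action on $\Sigma_p$ is equivariantly homeomorphic to a linear action on $\sph^3/\Gamma$, so the number and orders of the finite-isotropy components, the homeomorphism type of the quotient surface $\Sigma_p/T^1$, and the homeomorphism types of its double branched covers are all determined. What is \emph{not} determined is the metric: $\Sigma_p$ is controlled only up to equivariant homeomorphism. The three clauses of Condition \qprime{} are metric assertions about $\Sigma_p/T^1$ --- that $\xt_3(\Sigma_p/T^1)\leq \pi/3$, that the relevant double branched covers satisfy the same bound, and, when there are three finite-isotropy components, that $\diam(\Sigma_p/T^1)\leq \frac{\pi}{4}$ --- and the plan is to derive all three from comparison principles governing how extents and diameters behave under an isometric $T^1$ quotient.

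The comparison principle needed is the extent conjecture flagged for Section~\ref{s:alex}: for a fixed-point-free isometric $T^1$ action on a $3$-dimensional Alexandrov space with $\curv\geq 1$, the quotient is small, $\xt_3(\Sigma/T^1)\leq \pi/3$, together with the corresponding contraction statements for diameter and for the double branched covers (which remain positively curved by Lemma 5.2 of~\cite{GW2}). This is precisely what drives the Riemannian argument of Hsiang and Kleiner: there $\Sigma_p$ is a round spherical space form, so $\Sigma_p/T^1$ is rigidly a metric quotient of the round $\sph^2$, and the direct computation for the unit sphere gives $\xt_3(\sph^3(1)/T^1)\leq \pi/3$, with equality for the Hopf quotient $\sph^2(\tfrac12)$. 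Granting the conjecture, clause (1) is immediate; clause (2) follows by applying the smallness bound to the branched cover, feeding into Theorem~\ref{knot} exactly as before; and clause (3), whose sharper $\frac{\pi}{4}$ bound reflects the additional contraction present when $\Gamma$ is large enough to produce three finite-isotropy components, supplies the diameter hypothesis of the Equivariant Suspension Theorem~\ref{t:est}. Condition \qprime{}, hence Condition \q{}, then holds and Theorem~\ref{t:amsr} applies.

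The main obstacle is the extent conjecture itself, which is the genuine crux rather than a technicality. Because general Alexandrov metrics are flexible, the $T^1$-orbits in $\Sigma_p$ may have very different lengths and the quotient metric is far from the rigid round model, so only the lower curvature bound is available to control a maximizing three-point configuration in $\Sigma_p/T^1$. A natural attack is to lift such a configuration and feed it back into the Extent Lemma~\ref{extent}, applied either in $\Sigma_p$ itself or in the ambient orbit space $\bar{X}$, using $\curv\geq 1$ to force the average of the three pairwise distances down to $\pi/3$; the difficulty is to make this quantitative and uniform over all admissible $\Sigma_p$, and it is exactly here that the loss of Riemannian rigidity bites.
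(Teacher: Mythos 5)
You have not proved the statement, and neither does the paper: this is Conjecture \ref{conj}, which the paper deliberately leaves open, proving only the conditional version (Main Theorem \ref{t:amsr}). Your reduction is exactly the one the paper records in Section \ref{s:alex}: the fixed-point-homogeneous case is unconditional by Proposition \ref{p:fphcase}; at an isolated fixed point the isotropy action on $\Sigma_p$ is a fixed-point-free circle action on a closed, orientable, positively curved $3$-space, which by Proposition \ref{p:3space} is equivariantly homeomorphic to a linear action on $\sph^3/\Gamma$; and since the three clauses of Condition \qprime{} are metric rather than topological, verifying them for all admissible metrics is precisely the content of the paper's Conjecture \ref{extentconjecture}. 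So the structure of your argument is sound and faithful to the authors' intent, but the step where you ``grant the conjecture'' is the entire problem: you have reduced one open conjecture to another open conjecture. The loss of Riemannian rigidity that you name in your final paragraph --- orbits of uncontrolled length, no rigid model for the quotient metric, only the lower curvature bound available to constrain a maximizing configuration --- is exactly the obstruction the authors describe in the introduction and could not overcome; no quantitative version of your proposed attack via the Extent Lemma \ref{extent} is known to work.

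Two further details would need to be supplied even if Conjecture \ref{extentconjecture} were available. For clause (2) of Condition \qprime{} you must first argue that the double branched cover of $\Sigma_p/T^1$ over two points of finite isotropy is itself the $T^1$ quotient of a closed, positively curved, orientable $3$-space (namely the equivariant double branched cover of $\Sigma_p$ over the corresponding isotropy circles, positively curved by the argument of Lemma 5.2 of~\cite{GW2}), so that the conjecture applies to it; the conjectured inequality for $\Sigma_p/T^1$ alone says nothing about its branched covers. For clause (3) you need the explicit model computation that $\diam\left(\left(\sph^3(1)/\Gamma\right)/T^1\right) \leq \frac{\pi}{4}$ whenever the linear action has three components of finite isotropy --- plausible, and trivial in the Riemannian setting, but an assertion you invoke without verification. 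Neither of these is the crux, but a complete proof would have to include them alongside a proof of the extent conjecture itself.
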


The only remaining obstacle to proving this conjecture is the requirement that the action satisfy Condition \q{}. In this section we  review how Condition \q{} is used in proving Theorem \ref{t:amsr} in order to clarify what further work is necessary to remove it.

Recall that there are three conditions for the isotropy action at a fixed point to satisfy Condition \qprime{}:

\begin{enumerate}
	\item $\Sigma^3/T^1$ is a small space;
	\item The double branched cover of $\Sigma^3 / T^1$ over any two points corresponding to finite isotropy is small; and
	\item If there are three components of finite isotropy, $\diam \left( \Sigma^3 / T^1 \right) \leq \frac{\pi}{4}$.
\end{enumerate} 	

As mentioned in the Introduction, while Condition \qprime{} may seem technical, the following lemma shows that it is satisfied for any fixed-point-free  isometric circle action on 
any $3$-dimensional spherical orbifold of constant curvature $1$.

\begin{lemma}\label{l:qprime}

Let $T^1$ act isometrically and without fixed points on a $3$-dimensional spherical orbifold $\Sigma^3$ of constant curvature 1. Then the action satisfies Condition \qprime{}.
\end{lemma}
\begin{proof}

We recall from McGowan \cite{McG} that if $G$ is a compact Lie group acting by isometries on the $n$-sphere $S^n$, and $G_0$ denotes the connected component of the
identity element, then $G/G_0$ acts on $S^n/G_0$ by isometries.
So, for any isometric circle action  on  $\Sigma^3$, we have  $\Sigma^3/T^1=(S^3(1)/T^1)/\Gamma$, where $\Gamma$ is a finite group.
Note that $S^3/T^1$ is either a football orbifold or it is $S^2(1/2)$.

In the case that $S^3/T^1$ is a `football' orbifold, there is a natural distance-decreasing map from $S^2(1/2)$ to $\Sigma^3/T^1$ as well as to the double branched cover over the points of finite isotropy, showing that Conditions (1) and (2) are satisfied.

In the case where $S^3/T^1=S^2(1/2)$, 
then $\curv(\Sigma^3 / T^1) \geq 4$ and the same is true of any double branched cover over points of finite isotropy. By comparison to $\sph^2(1/2)$ (see \cite{GM}), Conditions (1) and (2) must hold. It is clear that if there are three components of finite isotropy, then the group $\Gamma$ is not cyclic. So the diameter bound, Condition (3), holds by Greenwald \cite{Gr} (cf. Dunbar, Greenwald, McGowan, and Searle \cite{DGMS}).
\end{proof}

 The first condition guarantees that if $p$ is a fixed point then the image of the fixed point, $\bar{p} \in X^4 / T^1$, has a small space of directions, $\Sigma_{\bar{p}}$. If $X^4$ is positively curved, then Proposition \ref{p:fixbound} yields the crucial upper bound of three on the number of fixed points.
 
 Where there are three fixed points, the second condition is used in Lemma \ref{l:3points} to show that Condition O is satisfied, which guarantees that any closed curve is unknotted and passes through all three points. Here it is crucial that, for $p$ a fixed point, not only is $\Sigma_{\bar{p}}$ small, but its double-branched cover over two points corresponding to finite isotropy is also small.
 
 Where there are two fixed points, Lemma \ref{l:2pointstructure} uses the second condition in the same way to guarantee that any closed curves are unknotted. The third condition is used to show that $\uptheta$-graphs are unknotted, by permitting the application of Theorem \ref{t:est}.

 Since closed, orientable $3$-dimensional Alexandrov spaces are equivariantly homeomorphic to $\sph^3/\Gamma$ with a linear $T^1$ action, the following conjecture would imply that Condition \qprime{} is always satisfied.
  
   \begin{conjecture}\label{extentconjecture}
 	Let $T^1$ act isometrically and without fixed points on $\Sigma^3=\sph^3/\Gamma$, a closed,  orientable $3$-dimensional Alexandrov space with $\curv \geq 1$.
 	Then
 $$\xt_q(\Sigma^3/T^1)=\xt_q((\sph^3)/\Gamma)/T^1)\leq \xt_q((\sph^3(1)/\Gamma)/T^1),$$
 where $T^1$ acts so that $\sph^3(1)/\Gamma$ and
 $\Sigma^3$ are $T^1$-equivariantly homeomorphic.
 \end{conjecture}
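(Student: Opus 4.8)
The plan is to work throughout with the two-dimensional quotients and to exploit that the orbit projection $\pi\colon\Sigma^3\to\bar\Sigma$, where $\bar\Sigma=\Sigma^3/T^1$, is a submetry. First I would record the elementary estimate $\xt_q(\bar\Sigma)\le\xt_q(\Sigma^3)$, obtained by lifting an optimal $q$-point configuration in $\bar\Sigma$ to representatives in $\Sigma^3$ and using that $\pi$ is $1$-Lipschitz. This is far too weak, since it retains no information about the circle fibres: the round quotient $\bar\Sigma_0=(\sph^3(1)/\Gamma)/T^1$ is strictly smaller than the bound $\curv\ge 1$ alone would suggest---for the Hopf action $\bar\Sigma_0$ is the round $\sph^2$ of radius $\tfrac12$, with $\curv\equiv 4$. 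The content of the conjecture is precisely that the circle symmetry forces the base to behave as if it carried this larger curvature, and the heart of any proof must capture that gain synthetically.

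Second, I would seek an Alexandrov analogue of the O'Neill curvature formula phrased at the level of comparison triangles. Given two orbits and a minimal geodesic $\gamma$ realising their distance in $\bar\Sigma$ and meeting both orthogonally, the infinitesimal rotation (holonomy) of the $T^1$-action along $\gamma$ should contribute a definite positive term to the comparison angles, making triangles in $\bar\Sigma$ \emph{fatter} than $\curv\ge 1$ permits. The target is a Toponogov-type monotonicity in which hinges and triangles in $\bar\Sigma$ are dominated by the corresponding objects in $\bar\Sigma_0$; taking the optimal configuration then yields $\xt_q(\bar\Sigma)\le\xt_q(\bar\Sigma_0)$ directly.

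A complementary route treats the statement as an extremality result: by Proposition \ref{p:3space} the equivariant homeomorphism type is rigid, so the only freedom is the metric, and one asks that the quotient extent be maximised, among all $T^1$-equivariant metrics on $\sph^3/\Gamma$ with $\curv\ge 1$, by the round one. Here I would argue by compactness and semicontinuity of $\xt_q$ under Gromov--Hausdorff convergence of the quotients, combined with an equality case of the relevant comparison: a maximising metric should force equality in Toponogov, hence a spherical suspension splitting of $\Sigma^3$, and thereby the round model. This reduces matters to identifying the critical metrics and verifying that no non-round competitor can achieve the maximum.

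The hard part, and the reason the statement remains conjectural, is that the Alexandrov category offers neither an $A$-tensor nor any rigidity placing the metric near the round one, so the positive horizontal term posited in the second step has no a priori synthetic meaning. The difficulty is sharpest at the exceptional orbits: where the circle isotropy is a nontrivial finite cyclic group the submetry degenerates, $\bar\Sigma$ acquires honest cone points, and comparison geodesics may run into them. A complete argument must therefore control, uniformly across the orbit-type strata, the quantitative trade-off between the lengths of the principal orbits and the horizontal distances they separate---and it is exactly this trade-off, rather than any single comparison inequality, that is presently missing.
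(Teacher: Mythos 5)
The statement you were asked to prove is Conjecture \ref{extentconjecture} of the paper, and the paper does not prove it: it is stated as an open conjecture, explicitly identified as the missing ingredient that would remove Condition \q{} from the Main Theorem. The only support the authors offer is heuristic --- the principle that spaces with $\curv \geq 1$ are ``smaller'' than spaces of constant curvature $1$, made precise in the non-equivariant setting by Theorem A of Grove and Markvorsen \cite{GM}, of which the conjecture is proposed as the equivariant analogue. So there is no proof in the paper to compare yours against; the relevant question is whether your proposal closes the gap, and by your own admission it does not. Your first step, the submetry estimate $\xt_q(\Sigma^3/T^1) \leq \xt_q(\Sigma^3) \leq \xt_q\left(\sph^3(1)\right)$, is correct but, as you say, far too weak: for the Hopf action the right-hand side of the conjecture is the extent of the round $2$-sphere of radius $\tfrac{1}{2}$. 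Your second step --- an Alexandrov analogue of the O'Neill formula producing a definite horizontal comparison gain --- is precisely the synthetic tool that does not currently exist; positing it is a restatement of the difficulty, not a proof.

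The extremality route has further unaddressed failure points beyond the one you name. The class of $T^1$-invariant metrics of $\curv \geq 1$ on $\sph^3/\Gamma$ is not known to be compact in any topology for which both the quotient extent is continuous and the equivariant homeomorphism type is preserved: Gromov--Hausdorff limits of such metrics can collapse or change isotropy structure, so a maximising sequence need not have a maximiser within the class, and equivariant convergence of the quotients requires separate care at the exceptional orbits. The rigidity step is also unsupported: equality in Toponogov comparison for $\curv \geq 1$ yields a spherical suspension structure, but a suspension $\Susp(Y)$ over an arbitrary positively curved surface $Y$ is far from the round model $\sph^3(1)/\Gamma$, so even a verified equality case would not single out the round metric, and in any case a fixed-point-free circle action is incompatible with a suspension splitting without further argument. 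In short, your proposal is an accurate diagnosis of why the statement is hard --- the absence of a synthetic $A$-tensor and the degeneration of the submetry at exceptional orbits --- and it correctly matches the paper's own assessment that the statement is open; but it is a research programme, not a proof, and should not be presented as resolving the conjecture.
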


  In $4$-dimensional Riemannian manifolds and orbifolds the spaces of directions are isometric to $\sph^3(1)$ or finite quotients of the same, and so Conjecture \ref{extentconjecture} holds  
  trivially.
  We are motivated in making Conjecture \ref{extentconjecture} by the general principle that spaces with $\curv \geq 1$ are in some sense ``smaller'' than spaces with constant curvature~$1$.

In particular, Theorem A of Grove and Markvorsen~\cite{GM} states that $$\xt_q(X^n)\leq \xt_q(\sph^n(1))$$ for any Alexandrov space $X$ with $\curv \geq 1$. Conjecture \ref{extentconjecture} can then be viewed as the correct equivariant version of Theorem A of~\cite{GM}, at least 
in the particular case of spherical $3$-manifolds.



\begin{thebibliography}{99}


\bibitem{Br} Bredon, G. E. (1972). \emph{Introduction to compact transformation groups.} Pure and Applied Mathematics, {\bf 46}. New York, NY: Academic.

\bibitem{Bro} Brown, M. (1960). \emph{A proof of the generalized Schoenflies theorem.} Bull. Amer. Math. Soc., {\bf 66}, 74--76.

\bibitem{BHS} Brown, R., Higgins, P. J., \& Sivera, R (2011). \emph{Nonabelian algebraic topology: filtered spaces,
crossed complexes, cubical homotopy groupoids.} EMS Tracts in Mathematics, {\bf 15}. Zurich: EMS Publishing House.

\bibitem{BBI} Burago, D., Burago, Y., \& Ivanov, S. (2001). \emph{A course in metric geometry.} Graduate Studies in Mathematics, {\bf 3}. Providence, RI: American Mathematical Society.

\bibitem{BGP} Burago, Y., Gromov, M., \& Perelman, G. (1992). \emph{A.D. Aleksandrov's spaces with 
curvature bounded below} (Russian). Uspekhi Mat. Nauk, {\bf 47}, 3--51, 222; translation in Russian Math. Surveys, {\bf 47}, 1--58.

\bibitem{CMB} Calcut, J. S., \& Metcalf-Burton, J. R. (2016). \emph{Double branched covers of theta-curves.} J. Knot Theory Ramifications, {\bf 25}, 1650046, 9pp.

\bibitem{Ch} Choi, S (2012). \emph{Geometric Structures on 2-orbifolds: Exploration of Discrete Symmetry}. MSJ Memoirs {\bf 27}. Tokyo: Mathematical Society of Japan.

\bibitem{CNZZ} Corro, D., N\'u\~{n}ez-Zimbr\'on, J., \& Zarei, M. (2019). \emph{Torus actions on Alexandrov 4-spaces.} Preprint, arXiv:\linebreak[2]{}1902.09402 [math.DG].

\bibitem{Dunbar} Dunbar, W. D. (1994). \emph{Nonfibering spherical 3-orbifolds.} Trans. Amer. Math. Soc., {\bf 341}, 121--142.

\bibitem{DGMS} Dunbar, W. D., Greenwald, S. J., McGowan, J., Searle, C. (2009). \emph{Diameters of 3-sphere quotients.} Differ. Geom. Appl., {\bf 27}, 307--319.

\bibitem{F1} Fintushel, R. (1977). \emph{Circle actions on simply connected 4-manifolds.} Trans. Amer. Math. Soc., {\bf 230}, 147--171.

\bibitem{Fr} Freedman, M. H. (1982). \emph{The topology of four-dimensional manifolds.} J. Differential Geom., {\bf 17}, 357--453.


\bibitem{GG}Galaz-Garc\'ia, F. (2012). \emph{Nonnegatively curved fixed point homogeneous manifolds in low dimensions.} Geom. Dedicata, {\bf 157} 367--396.

\bibitem{GG2} Galaz-Garc\'ia, F. (2012). \emph{Simply connected Alexandrov 4-manifolds with positive or nonnegative curvature and torus actions.} Preprint, arXiv:\linebreak[2]{}1208.3041 [math.DG].

\bibitem{GGG1} Galaz-Garc\'ia, F., \& Guijarro, L. (2013). \emph{Isometry groups of Alexandrov spaces.} Bull. London Math. Soc, {\bf 45}, 567--579.

\bibitem{GGG2} Galaz-Garc\'ia, F., \& Guijarro, L. (2015). \emph{On three-dimensional Alexandrov spaces.} Int. Math. Res. Notices, {\bf 2015}, 5560--5576.

\bibitem{GGK} Galaz-Garc\'ia, F., \& Kerin, M. (2014). {Cohomogeneity-two torus actions on non-negatively curved
	manifolds of low dimension.} Math. Zeitschrift, {\bf 276}, 133--152.

\bibitem{GGS} Galaz-Garc\'ia, F., \& Searle C. (2011). \emph{Cohomogeneity one Alexandrov spaces.} Transform. Groups, {\bf 16}, 91--107.

\bibitem{Gr} Greenwald, S. (1998). {\em Diameters of spherical Alexandrov spaces and curvature one orbifolds.} Ph.D. Thesis, Department of Mathematics, University of Pennsylvania.

\bibitem{GM} Grove, K., \& Markvorsen, S. (1995). \emph{New extremal problems for the Riemannian recognition problem via Alexandrov geometry.} J. Amer. Math. Soc., \textbf{8}, 1--28.

\bibitem{GS1} Grove, K., \& Searle, C. (1994). \emph{Positively curved manifolds with maximal symmetry rank}, J. Pure Appl. Alg. {\bf 91} 137--142.

\bibitem{GS2} Grove, K., \& Searle, C. (1997). \emph{Differentiable topological restrictions by curvature and symmetry.} J Differential Geom., \textbf{47} 530--559.

\bibitem{GW2} Grove, K., \& Wilking, B. (2014). \emph{A knot characterization and 1-connected nonnegatively curved 4-manifolds with circle symmetry.} Geom. Topol., {\bf 18}, 3091--3110.

\bibitem{HQ} Haefliger, A., \& Quach, N.D. (1984). \emph{Une pr\'esentation du groupe fondamental d'une orbifold.} Ast\'erisque, {\bf 116}, 98--107.

\bibitem{Ham} Hamilton, R.\ S. (1982). \emph{Three-manifolds with positive Ricci curvature.} J. Differential Geom., \textbf{17}, 255--306.

\bibitem{HK} Hsiang, W.-Y., \& Kleiner, B. (1989). \emph{On the topology of positively curved $4$-manifolds with symmetry.} J. Differential Geom., \textbf{30}, 615--621.

\bibitem{HS} Harvey, J., \& Searle, C. (2017). \emph{Orientation and symmetries of Alexandrov spaces with applications in positive curvature.} J. Geom. Anal., {\bf 27}, 1636--1666.

\bibitem{JN} Jankins, M., \& Neumann, W. D. (1983). \emph{Lectures on {S}eifert manifolds}, Brandeis Lecture Notes, {\bf 2}. Waltham, MA: Brandeis University.

\bibitem{K} Kapovitch, V. (2007). \emph{Perelman's stability theorem}.  Surv. Differ. Geom., \textbf{11}, 103--136. Somerville, MA: Int. Press.

\bibitem{Ko} Kobayashi, S. (1958). \emph{Fixed points of isometries.} Nagoya Math. J., \textbf{13}, 63--68.

\bibitem{McG} McGowan, J. (1993). {\em The diameter function on the space of space forms.} Compositio Helv. Math., {\bf 87}, 79--98.

\bibitem{MY1} Montgomery, D., \& Yang, C. T. (1960). \emph{Groups on $\sph^n$ with principal orbits of dimension $n-3$.} Illinois J. Math., {\bf 4}, 507--517.

\bibitem{M}  Mostert, P. S. (1957). {\em On a compact Lie group acting on a manifold}, Ann. Math. (2), {\bf 65}, 447-455.

\bibitem{N}  Neumann, W. D. {\em 3-dimensional $G$-manifolds with 2-dimensional orbits}, 1968 Proc. Conf. on Transformation Groups (New Orleans, La., 1967), 220--222. Berlin: Springer.

\bibitem{NZ} N\'u\~{n}ez-Zimbr\'on, J. (2018). {\em Closed three-dimensional Alexandrov spaces with isometric circle actions.} Tohoku Math. J., {\bf 70}, 267--284.

\bibitem {Orlik} Orlik, P. (1972). {\em Seifert manifolds.} Lecture Notes in Mathematics, {\bf 291}. Berlin: Springer-Verlag.


\bibitem{OR} Orlik, P., \& Raymond, F. (1968). \emph{Actions of $SO (2)$ on 3-manifolds.} Proceedings of the Conference on Transformation Groups (New Orleans, La., 1967), 297--318. Berlin: Springer.

\bibitem{Pa} Palais, R. S. (1960). \emph{The classification of {$G$}-spaces.} Mem. Amer. Math. Soc. No. 36.

\bibitem{P} Perelman, G. (1991). \emph{A.D. Alexandrov's spaces with curvatures bounded from below, II.} Preprint.
 
\bibitem{PePet}  Perelman, G., \& Petrunin, A. (1994). {\em Extremal subsets in Aleksandrov spaces and the generalized Lieberman theorem} (Russian). Algebra i Analiz, {\bf 5}, 242--256; translation in St. Petersburg Math. J., {\bf 5}, 215--227.

\bibitem{Pet1}  Petrunin, A. (1998). \emph{Parallel transportation for Alexandrov spaces with curvature bounded below}. Geom. Funct. Anal., {\bf 8}, 123--148.

 
\bibitem{Pl} Plaut, C. (2002). \emph{Metric spaces of curvature $> k$}. Handbook of geometric topology, 819--898. Amsterdam: North-Holland.

 
\bibitem{Ray} Raymond, F. (1968). \emph{Classification of the actions of the circle on
              {$3$}-manifolds}. Trans. Amer. Math. Soc., \textbf{131}, 51--78.

\bibitem{SY} Searle, C., \& Yang, D.-G. (1994). \emph{On the topology of non-negatively curved simply connected
              {$4$}-manifolds with continuous symmetry}.
   Duke Math. J.,
  \textbf{74}, 547--557.

\bibitem{Scott} Scott, P. (1983). \emph{The geometries of 3‐manifolds.} Bull. London Math. Soc., \textbf{15}, 401--487.

\bibitem{Sh} Shiohama, K. (1993). \emph{An introduction to the geometry of Alexandrov spaces.} Lecture Notes Series, {\bf 8}. Seoul: Seoul National University, Research Institute of Mathematics, Global Analysis Research Center.



\bibitem{Y} Yeroshkin, D. (2014). \emph{On geometry and topology of 4-orbifolds}. Preprint, arXiv:\linebreak[2]{}1411.1700 [math.DG].
\end{thebibliography}
\end{document}